\documentclass[11pt,a4paper]{amsart}
 
\usepackage[utf8]{inputenc}
\usepackage{amsmath,amssymb,amsthm,mathtools}
\usepackage{geometry}
\usepackage{graphicx}
\usepackage{xcolor}
\usepackage{hyperref}
\usepackage{mathrsfs}

\newtheorem{theorem}{Theorem}[section]
\newtheorem{lemma}[theorem]{Lemma}
\newtheorem{proposition}[theorem]{Proposition}
\newtheorem{corollary}[theorem]{Corollary}
\theoremstyle{definition}
\newtheorem{definition}[theorem]{Definition}
\newtheorem{assumption}[theorem]{Assumption}
\newtheorem{remark}[theorem]{Remark}

\newcommand{\R}{\mathbb{R}}
\newcommand{\Gammah}{\Gamma_h}
\newcommand{\nablag}{\nabla_{\Gamma}}
\newcommand{\nablagh}{\nabla_{\Gamma_h}}
\newcommand{\deltag}{\Delta_{\Gamma}}
\newcommand{\deltagh}{\Delta_{\Gamma_h}}
\newcommand{\tdeltagh}{\widetilde{\Delta}_{\Gamma_h}}
\newcommand{\Eh}{\mathcal E_h}
\newcommand{\Kh}{\mathcal K_h}
\newcommand{\tr}{\operatorname{tr}}
\newcommand{\di}{\operatorname{div}}
\newcommand{\diam}{\operatorname{diam}}
\newcommand{\vn}{\mathbf{n}}

\newcommand{\vx}{\mathbf{x}}

\newcommand{\vnu}{\boldsymbol{\nu}}

\newcommand{\mean}[1]{\{\!\{#1\}\!\}}
\newcommand{\norm}[1]{\lVert#1\rVert}
\newcommand{\enorm}[1]{\lvert\!\lvert\!\lvert #1\rvert\!\rvert\!\rvert_h}

\title[Reconstructed-Laplacian method on surfaces]{A Reconstructed-Laplacian Method for the Surface Biharmonic Equation on Parametric Meshes}

\author{Shuo Yang}
\thanks{Beijing Institute of Mathematical Sciences and Applications, Beijing, 101408, China}
\email{shuoyang@bimsa.cn}

\date{\today}   

\begin{document}

\begin{abstract}
We develop and analyze a continuous/discontinuous Galerkin (CDG) method based on reconstructed surface Laplacians for the biharmonic equation on a smooth closed surface.  
Continuous mapped finite elements of degree $k\ge2$ are used on fitted parametric meshes of degree $r\ge1$, while a discontinuous degree-$(k-2)$ lifting corrects the broken Laplace-Beltrami operator for two-sided conormal-flux jumps.  
The resulting completed-square form is coercive on the mean-zero space for every fixed $\beta>0$, without requiring a sufficiently large penalty parameter.    
Under the standard geometric assumptions, we prove that the energy and reconstructed-Laplacian errors are $\mathcal O(h^{k-1}+h^r)$, and the $L^2$-error is $\mathcal O(h^{q_k}+h^{r+1})$, where $q_2=2$ and $q_k=k+1$ for $k\ge3$.
Benchmark computations support these rates, while a surface Swift-Hohenberg experiment illustrates the extension of the method to nonlinear Laplacian-dominated models.
\end{abstract}

\maketitle

%=======================================================================
\section{Introduction}
%=======================================================================
Fourth-order partial differential equations on surfaces occur in phase-field models, surface diffusion, membrane and shell theories, geometric regularization, and pattern formation; representative examples include the surface Cahn-Hilliard equation, fourth-order membrane models, the surface Kirchhoff plate equation, and surface Swift-Hohenberg equations \cite{du2011cahn,elliott2015cahn,stinner2020fourth,walker2022kirchhoff,yu2023stabilized}. 
As a canonical elliptic model, let $\Gamma\subset\mathbb R^3$ be a smooth, connected, orientable, closed surface and consider the surface biharmonic equation 
\[
  \deltag^2u=f\quad\text{on }\Gamma,
  \qquad
  (u,1)_\Gamma=(f,1)_\Gamma=0.
\]
Its variational formulation is $H^2$-elliptic.  A conforming discretization requires a globally $C^1$ space, while standard fitted surface meshes are only $C^0$ across element interfaces.  
In addition to this classical fourth-order difficulty, a surface method must control variational crimes in geometric approximations.   
These effects become especially important when both the finite element and the surface geometry are approximated at high order.  
We refer to \cite{dziuk2013finite,bonito2020finite} for general surface finite element theory and to \cite{demlow2009higher} for higher-order parametric surface approximation.

Several non-conforming finite element methods have been designed and analyzed for the surface biharmonic problem in the past decade.  
Larsson and Larson \cite{larsson2017continuous} introduced a $C^0$ interior penalty discontinuous Galerkin (IPDG) method on piecewise planar surfaces and proved energy- and $L^2$-error estimates.  
More recent alternatives include a stabilized surface nonconforming method based on a New-Zienkiewicz-type element \cite{wu2025stabilized}, a continuous linear method based on surface gradient recovery \cite{cai2026continuous}, and an unfitted $C^0$ interior-penalty TraceFEM \cite{neilan2025tracefem}.  
Related fourth-order surface formulations include second-order splitting methods studied in \cite{elliott2019splitting}.
Also, the surface Hellan-Herrmann-Johnson method was proposed for the Kirchhoff plate equation on surfaces in \cite{walker2022kirchhoff}, and numerical methods for surface Cahn-Hilliard-type problems were investigated in \cite{du2011cahn,elliott2015cahn,stinner2020fourth}.  
These approaches avoid globally $C^1$ surface elements in different ways, for example by introducing auxiliary unknowns, weakly enforcing interelement derivative continuity, or recovering higher derivatives.

The present construction is also connected with the broader discontinuous Galerkin and local discontinuous Galerkin (LDG) literature.  
On planar domains, the LDG method was introduced for convection-diffusion systems in \cite{cockburn1998local}; its elliptic error analysis and its relation to other DG formulations were developed in \cite{castillo2000apriori,arnold2002unified}.  Extensions to equations with higher-order derivatives were studied in \cite{yan2002local}, while $C^0$ interior-penalty methods for fourth-order elliptic problems were developed in \cite{engel2002continuous,brenner2005c0}.  
More recently, lifting-based reconstructed Hessians have been used for nonlinear plate models \cite{bonito2022ldg,bonito2023numerical,bonito2024gamma,bonito2024finite}.  On surfaces, DG methods for second-order elliptic equations were analyzed for affine and higher-order geometry in \cite{dedner2013analysis,antonietti2014high}; see also the adaptive method \cite{dedner2016adaptive} and the hybridizable DG method \cite{cockburn2016hybridizable}.  
A surface LDG scheme for the Cahn-Hilliard equation was proposed in
\cite{xu2024surfaceLDG}, where the authors rewrite the equation as a
first-order system and couple fully discontinuous scalar and vector
auxiliary variables through numerical fluxes.

Our method is LDG-inspired but differs in formulation from a fully discontinuous first-order system as in its original emergence. 
Instead, we construct a reconstructed surface Laplacian operator and replace the original Laplace-Beltrami operator in the variational formulation to design the discretization; this generalizes the ideas of \cite{bonito2022ldg,bonito2023numerical} from planar domains to surfaces. 
A primary theoretical objective of this paper is to investigate the interplay between the concept of reconstructed operators and geometric variational crimes in surface PDEs, as well as their subsequent impact on the numerical analysis.
    
The discrete solution is in an $H^1$-conforming mapped space $W_h$ of degree $k$, whereas a scalar lifting takes values in a discontinuous space $Q_h$ of degree $k-2$.  Let $J(v)$ denote the sum of the two outward discrete conormal derivatives on an interior edge. 
The lifting $R_hJ(v)$ is defined by duality against edge averages, and the reconstructed surface Laplacian is
\[
  \tdeltagh v:=\deltagh v-R_hJ(v).
\]
The discrete bilinear form is
\[
  (\tdeltagh v_h,\tdeltagh w_h)_{\Gamma_h}
  +\beta\sum_{E\in\mathcal E_h}h_E^{-1}
  (J_E(v_h),J_E(w_h))_E.
\]
Thus the consistency terms that appear separately in an IPDG formulation are incorporated into the terms like $(\deltagh v_h,R_hJ(w_h))_{\Gamma_h}$ for affine meshes.  
The additional lifting-lifting term completes this square and is the mechanism behind stability of this method for \emph{every} fixed $\beta>0$.   
Standard symmetric IPDG, by contrast, generally requires the stabilization parameter $\beta$ to exceed a threshold.  

There is also a geometric reason to reconstruct the scalar Laplace-Beltrami operator rather than a surface Hessian.  
Let $\nabla_\Gamma^2$ denote the covariant tangential Hessian and $\mathcal K$ the Gaussian curvature, the integrated Bochner identity on a closed surface is
\begin{equation}\label{eq:intro-bochner}
  (\deltag v,\deltag w)_\Gamma
  =(\nabla_\Gamma^2v,\nabla_\Gamma^2w)_\Gamma
  +(\mathcal K\nablag v,\nablag w)_\Gamma.
\end{equation}
Consequently, a Hessian reconstruction alone does not reproduce the biharmonic energy unless the curvature term is included.  
Reconstructing $\deltag$ directly is sufficient and only resorts to a single scalar lifting.

The analysis is carried out for arbitrary fixed solution degree $k\ge2$ and
geometry degree $r\ge1$.  The computational surface is generated by a general
degree-$r$ parametric curving map; it is not assumed to be the exact
closest-point interpolant.  The closest-point projection is used as an
analytical lift, while explicit shape-regularity, distance, normal, measure,
and conormal estimates are imposed on the actual computational maps.  This
separation is important in implementations based on practical high-order mesh
generators.  

Regarding the interplay between the reconstructed surface Laplacian and high-order geometric approximation, a key observation is that, on a curved element, $\deltagh W_h$ is generally not contained in $Q_h$. Consequently, an associated $L^2$ projection must be introduced and tracked in the analysis, together with the nonzero geometric jump of the closest-point extension of a smooth function.

The main contributions of the paper and features of the method are as follows.
\begin{itemize}
\item \textit{Reconstructed surface Laplacian.} We formulate a CDG method based on a reconstructed surface Laplacian operator on arbitrary-order fitted parametric meshes. The formulation treats the two element-side conormals independently and remains valid when element maps, Laplacians, and flux jumps are nonpolynomial on curved meshes.

\item \textit{Coercivity without penalty tuning.} We prove continuity and coercivity in energy norm for every fixed $\beta>0$.  The method therefore avoids selecting a penalty above an unknown stability threshold.  

\item \textit{General-order error estimates.} We separate approximation, projection, and geometric consistency errors for general $k$ and $r$. The energy error and the error in the reconstructed Laplacian are bounded by $\mathcal O(h^{k-1}+h^r)$.  A duality argument, including a higher-order smooth-smooth geometric consistency estimate, gives $\mathcal O(h^{q_k}+h^{r+1})$ in $L^2$-error, where $q_2=2$ and $q_k=k+1$ for $k\ge3$.

\item \textit{Curved-mesh assembly.} We give a general assembly formula that works on generally curved elements.  Benchmark computations support the theoretical rates, and direct geometric diagnostics provide numerical evidence for the assumed high-order geometry estimates with the computational mesh used practically.

\item \textit{Extension to nonlinear surface models.} 
The proposed method is conceptually straightforward, offering strong potential for generalization to other fourth-order surface PDEs. Specifically, substituting $\deltag$ with $\tdeltagh$ allows for the direct derivation of discretized formulations, thereby eliminating the need for cumbersome terms or non-standard treatments.
We illustrate that the same reconstructed Laplacian can be reused in nonlinear surface models with a Laplacian-dominated principal part.  
A surface Swift-Hohenberg experiment demonstrates this extension.
\end{itemize}

The remainder of the paper is organized as follows.  Section~2 introduces the parametric surface geometry and its approximation properties.  
Section~3 defines the finite element spaces, lifting, reconstructed surface Laplacian, and discrete problem and establishes stability.  
Section~4 proves the energy, reconstructed-Laplacian, and $L^2$ error estimates.  Section~5 discusses assembly on curved meshes and presents benchmark numerical experiments.
Section~6 treats Laplacian-dominated nonlinear problems and the surface Swift-Hohenberg example.  
Conclusions are given in Section~7, and technical geometric estimates are collected in the appendix.

%=======================================================================
\section{Geometric approximations}
%=======================================================================
\subsection{Tangential calculus and the continuous problem}

Let $\Gamma\subset\R^3$ be a connected, compact, orientable surface without boundary, with unit normal $\vn$.    
The orthogonal projection onto the tangent space of $\Gamma$ is
\begin{equation}\label{def:tangent-proj}
  P:=I-\vn\otimes\vn.
\end{equation}
For $v:\Gamma\to\R$ and $X:\Gamma\to\R^3$, let $\widetilde v, \widetilde X$ be any sufficiently smooth extension to a neighborhood of $\Gamma$. The tangential gradient, surface divergence, and Laplace-Beltrami operator are
\begin{equation}\label{def:tangent-op}
  \nablag v:=P\nabla\widetilde v,
  \qquad
  \di_{\Gamma}X:=\tr(P\nabla\widetilde X),
  \qquad
  \deltag v:=\di_{\Gamma}(\nablag v).
\end{equation}
These definitions are independent of the chosen extensions.  We use the standard Sobolev spaces on $\Gamma$; see
\cite{dziuk2013finite,demlow2009higher,bonito2020finite} for a more detailed introduction of the tangential calculus.

We first consider the surface biharmonic equation 
\begin{equation}\label{eq:strong-problem}
  \deltag^2u=f\quad\text{on }\Gamma,
  \qquad
  (u,1)_\Gamma=0,
\end{equation}
under the compatibility condition
\begin{equation}\label{eq:compatibility}
  (f,1)_\Gamma=0.
\end{equation}
For $m\geq0$, define 
\begin{equation}\label{eq:mean-zero-space}
 H^m_\#(\Gamma):=\left\{v\in H^m(\Gamma):(v,1)_\Gamma=0\right\}.
\end{equation}
With a given $f\in L^2_{\#}(\Gamma)$, the weak formulation is to find $u\in H^2_\#(\Gamma)$ such that
\begin{equation}\label{eq:continuous-weak}
  a(u,v):=(\deltag u,\deltag v)_\Gamma=(f,v)_\Gamma
  \qquad\forall v\in H^2_\#(\Gamma).
\end{equation}
On a connected closed surface, the kernel of $\deltag$ consists of constants, and thus $\norm{\deltag v}_{L^2(\Gamma)}$ is equivalent to $\norm{v}_{H^2(\Gamma)}$ on $H^2_\#(\Gamma)$.  
Hence \eqref{eq:continuous-weak} is well-posed by the Lax-Milgram theorem. 
For an integer $s\ge4$, assume that $\Gamma$ is a closed, connected surface of class $C^{s-1,1}$. Then, for every $f\in H^{s-4}_\#(\Gamma)$, the problem \eqref{eq:strong-problem} has a unique solution in $ H^s_\#(\Gamma)$ and
\begin{equation}\label{eq:higher-regularity}
  \norm{u}_{H^s(\Gamma)}
  \le
  C\norm{f}_{H^{s-4}(\Gamma)}.
\end{equation}
This follows from \cite[Section~4.4]{benavides2026lp}.
For smooth surfaces, the corresponding general elliptic estimate can be found in, for instance \cite{besse2007einstein}.

\subsection{Arbitrary-order parametric surface meshes}
\label{subsec:parametric-mesh}
We fix an integer $r\ge1$ to be the degree of the geometric approximation.  
Indeed, if $\Gamma\in C^q$, $q\ge2$, then the signed distance function $d$ belongs to $C^q$ in a proper tubular neighborhood, while the closest-point projection $p$ belongs to $C^{q-1}$; see \cite{gilbarg1998elliptic,bonito2020finite}.
To ensure the regularity of the closest-point lift used in the analysis and to include the degree-$r$ closest-point interpolant in Remark~\ref{rem:admissible-geometric-constructions} as one admissible geometric construction, we require $\Gamma\in C^{r+2}$.
In the subsequent analysis, with a given finite element approximation degree $k\ge2$, we need the elliptic regularity \eqref{eq:higher-regularity} for $s=s_k=\max\{k+1,4\}$. 
We consider $\Gamma\in C^{r+2}\cap C^{s_k,1}$ in the rest of the paper.  The $C^{s_k-1,1}$ regularity in \eqref{eq:higher-regularity} is sufficient for the PDE estimate itself; the additional derivative is a convenient sufficient condition for the $W^{s_k,\infty}$ regularity of the closest-point pullbacks used in the high-order norm equivalences and interpolation estimates below.
We note that this condition is stronger than necessary for some individual estimates, but it ensures that the signed distance function $d$, the closest-point projection $p$, and their derivatives used below are well-defined and uniformly bounded, and that the elliptic regularity~\eqref{eq:higher-regularity} always holds.

For a sufficiently small $\delta_0>0$, we define the tubular neighborhood of $\Gamma$ as  
\begin{equation}\label{def:tub-neigh}
  U_{\delta_0}(\Gamma):=\{\vx\in\R^3:|d(\vx)|<\delta_0\},
\end{equation}
and there exists a unique closest-point projection $p$ onto $\Gamma$ in $U_{\delta_0}$.  
In this neighborhood, we have 
\begin{equation}\label{eq:closest-point}
  \vn=\nabla d,
  \qquad
  \mathcal W=D^2d,
  \qquad
  p(\vx)=\vx-d(\vx)\vn(\vx),
  \qquad
  \nabla p=P-d\mathcal W,
\end{equation}
where the first two equations mean that $\nabla d$ and $D^2d$ are proper extensions of $\vn$ and $\mathcal W$ in $U_{\delta_0}$. 
Here $\mathcal W$ is the Weingarten map of the surface, with the sign determined by that of $d$.  
The existence, regularity, and uniform boundedness of these quantities follow from the regularity of $\Gamma$; see \cite{dziuk2013finite,demlow2009higher,bonito2020finite}.

We first consider a first order geometric approximation of $\Gamma$. 
Let
\begin{equation*}
  \gamma_h=\bigcup_{T\in\mathcal T_h}T
  \subset U_{\delta_0}(\Gamma)
\end{equation*}
be a connected polyhedral surface whose vertices lie on $\Gamma$.  We assume that $\{\mathcal T_h\}_h$ is a shape-regular family of conforming affine triangulations.  
We set
\begin{equation*}
  h_{T}:=\diam(T),
  \qquad
  h_{e}:=\diam(e),
  \qquad
  h:=\max_{T\in\mathcal T_h}h_{T}.
\end{equation*}
Here $T\in\mathcal T_h$, $e\in\mathscr E_h$, and $\mathscr E_h$ denotes the set of edges of the parent triangulation $\mathcal T_h$. 
For $e:=\partial T^{+}\cap\partial T^{-}$, by shape regularity there holds
\begin{equation*}
  h_{e}\simeq h_{T^{+}}\simeq h_{T^{-}},
\end{equation*}
with constants independent of the local mesh sizes.
For sufficiently small $h$, $p:\gamma_h\to\Gamma$ is a bijection.  
Let $F_T:\widehat T\to T$ be the affine map from the reference triangle. Note that $c_0h_T|\xi|\le |DF_T\xi|\le C_0h_T|\xi|$ for any $\xi\in\mathbb R^2.$

In order to define the computational mesh with general geometric order $r\geq1$, we consider a general globally continuous degree-$r$ map $p_h^r:\gamma_h\to U_{\delta_0}(\Gamma)$ such that 
\begin{equation}\label{eq:computational-curving-map}
  (p_h^r|_T)\circ F_T\in[\mathcal P_r(\widehat T)]^3
  \qquad\forall T\in\mathcal T_h.
\end{equation}
In particular, for $r=1$, we set $p_h^1=\operatorname{id}_{\gamma_h}$.  For a given $T\in\mathcal T_h$, define
\begin{equation}\label{eq:curved-element-map}
  K:=p_h^r(T),
  \qquad
  F_K^r:=(p_h^r|_T)\circ F_T,
  \qquad
  \Gamma_h:=p_h^r(\gamma_h).
\end{equation}
The global continuity of $p_h^r$ implies that
\begin{equation}\label{eq:curved-mesh}
  \Kh:=\{K=p_h^r(T):T\in\mathcal T_h\},
  \qquad
  \Eh:=\{E=p_h^r(e):e\in \mathscr E_h\}
\end{equation}
form a conforming, generally only $C^0$, curved triangulation.

\begin{assumption}[Shape regular curved meshes] 
\label{ass:computational-element-maps}
For all sufficiently small $h$, every $F_K^r$ is injective, has rank two, and satisfies
\begin{equation}\label{eq:parametric-shape-regularity}
  c h_T^2|\xi|^2
  \le |DF_K^r(\widehat x)\xi|^2
  \le C h_T^2|\xi|^2,
  \qquad
  \widehat x\in\widehat T,\quad \xi\in\mathbb R^2,
\end{equation}
as well as the scaled higher-derivative bounds
\begin{equation}\label{eq:curved-map-derivative-bounds}
  \norm{D^mF_K^r}_{L^\infty(\widehat T)}
  \le C h_T^m,
  \qquad 2\le m\le r,
\end{equation}
and the uniform bi-Lipschitz estimate
\begin{equation}\label{eq:curved-map-bilipschitz}
  c h_T|\widehat x-\widehat y|
  \le |F_K^r(\widehat x)-F_K^r(\widehat y)|
  \le C h_T|\widehat x-\widehat y|,
  \qquad\widehat x,\widehat y\in\widehat T.
\end{equation}
The constants are independent of $T$ and $h$, the element orientations are consistent, and the closest-point projection $p:\Gamma_h\to\Gamma$ is a bijection.
\end{assumption}
 
This assumption asserts that the family of curved meshes is shape-regular.  
The bi-Lipschitz estimate implies
\begin{equation*}
  h_K:=\diam(K)\simeq h_T.
\end{equation*}
Moreover, if $E=\partial K^+\cap\partial K^-$, the shape regularity of the affine parent mesh and the uniform bi-Lipschitz bounds also lead to
\begin{equation*}
  h_E:=\diam(E)
  \simeq h_{K^+}
  \simeq h_{K^-},
\end{equation*}
with constants uniform over the mesh family. With a slight abuse of notation, we now redefine $h:=\max_{E\in\mathcal E_h}h_{E}$.

\begin{remark}[Admissible geometric constructions]
\label{rem:admissible-geometric-constructions}
The closest-point interpolant is one admissible construction, and it is used in \cite{demlow2009higher} to construct the computational mesh. 
Namely, if $I_T^r$ denotes the degree-$r$ nodal Lagrange interpolant, one may take
\begin{equation}\label{eq:parametric-closest-point-map}
  p_h^r|_T:=I_T^r(p|_T).
\end{equation}
The agreement of the nodal values on common parent edges gives global continuity. The standard interpolation estimates, geometric approximation estimate $\norm{p-id}_{L^\infty(T)}\lesssim h_T^2$, the Bramble-Hilbert lemma on $\widehat T$ and the standard scaling argument ensure that Assumption~\ref{ass:computational-element-maps} holds for $p_h^r$ defined in \eqref{eq:parametric-closest-point-map}.

However, in general, one may want to avoid using the closest-point projection $p$ to construct the computational mesh in practice, because obtaining an explicit expression for $p$ on general surfaces $\Gamma$ is challenging (if not impossible), even though $p$ is convenient for numerical analysis; see \cite{bonito2020finite} for further discussion.
\end{remark}

\subsection{Discrete differential geometry and transformations}

On $K=F_K^r(\widehat T)$, we define the oriented discrete unit outer normal and
tangential projection to $\Gammah$ by
\begin{equation}\label{eq:curved-discrete-normal}
  \vn_h\circ F_K^r
  :=\frac{\partial_1F_K^r\times\partial_2F_K^r}
  {|\partial_1F_K^r\times\partial_2F_K^r|},
  \qquad
  P_h:=I-\vn_h\otimes\vn_h.
\end{equation}
They are smooth in the interior of each element and generally discontinuous across an edge. 
Tangential differential operators on $\Gammah$ are defined elementwise; in particular,
\begin{equation*}
  \nablagh v=P_h\nabla\widetilde v,
  \qquad
  \deltagh v=\di_{\Gammah}(\nablagh v).
\end{equation*}

For $v$ on $\Gamma$, its closest-point extension is $v^e:=v\circ p$.  For
$v_h$ on $\Gammah$, its lift is $v_h^l:=v_h\circ p^{-1}$.  Exact geometric
quantities evaluated at $p(x)$ are also denoted by a superscript $e$.  The
exact lifted mesh is
\begin{equation}\label{eq:lifted-mesh}
  \mathcal K_h^l:=\{K^l=p(K):K\in\Kh\},
  \qquad
  \mathcal E_h^l:=\{E^l=p(E):E\in\Eh\}.
\end{equation}

Define the surface and edge measure quotients by
\begin{equation}\label{eq:measure-quotients}
  ds=\mu_h\,ds_h,
  \qquad
  ds_{E^l}=\mu_{h,E}\,ds_E.
\end{equation}
The tangential derivative of the closest-point projection, restricted to
$T_x\Gammah$, is represented by
\begin{equation}\label{eq:B-map}
  B_h(x):=P^e(I-d\mathcal W)P_h
  :T_x\Gammah\longrightarrow T_{p(x)}\Gamma.
\end{equation}
For sufficiently small $h$, $B_h$ is invertible between these tangent spaces
and
\begin{equation}\label{eq:gradient-transform}
  \nablagh v^e=B_h^T(\nablag v)^e,
  \qquad
  (\nablag v_h^l)^e=B_h^{-T}\nablagh v_h.
\end{equation}

\subsection{Geometric estimates}
We state explicitly the geometric approximation properties required of the actual computational maps on the generated mesh family.

\begin{assumption}[Parametric geometry estimates]
\label{lem:parametric-geometry}
In addition to Assumption~\ref{ass:computational-element-maps}, the actual curving map satisfies, for every $T\in\mathcal T_h$,
\begin{equation}\label{eq:scaled-distance-defect}
  \norm{d\circ p_h^r}_{W^{m,\infty}(T)}
  \le Ch_T^{r+1-m},
  \qquad m=0,1,2.
\end{equation}
Moreover, for $h$ sufficiently small,
\begin{align}
  \norm{d}_{L^\infty(\Gamma_h)}
  &\le Ch^{r+1},
  &
  \norm{\vn^e-\vn_h}_{L^\infty(\Gamma_h)}
  &\le Ch^r,
  \label{eq:dist-vel}
  \\
  \norm{1-\mu_h}_{L^\infty(\Gamma_h)}
  &\le Ch^{r+1},
  &
  \norm{1-\mu_{h,E}}_{L^\infty(\mathcal E_h)}
  &\le Ch^{r+1},
  \label{eq:mu-errors}
\end{align}
Furthermore,  
\begin{equation}\label{eq:Bh-bounded}
  \norm{B_h}_{L^\infty(\Gamma_h)}
  +\norm{B_h^{-1}}_{L^\infty(\Gamma_h)}
  \le C, 
\end{equation}
\begin{equation}\label{eq:PhBh-est}
  \norm{P^e-B_hB_h^T}_{L^\infty(\Gamma_h)}
  +
  \norm{P_h-B_h^TB_h}_{L^\infty(\Gamma_h)}
  \le Ch^{r+1}.
\end{equation}
The corresponding elementwise and edgewise estimates hold with $h$ replaced by $h_K$ and $h_E$, respectively.
\end{assumption}

For the particular closest-point interpolant \eqref{eq:parametric-closest-point-map}, the distance and normal estimates \eqref{eq:scaled-distance-defect} and \eqref{eq:dist-vel} follow from \cite[Proposition~2.3]{demlow2009higher}; see also \cite[Section~4.2 and Lemma~4.4]{dziuk2013finite}.
The surface and edge measure estimates \eqref{eq:mu-errors} are proved in \cite[Lemma~4.1]{antonietti2014high}.
The remaining estimates follow naturally from the gradient relations \cite[equations~(2.11)-(2.13)]{demlow2009higher} and the estimate \cite[Proposition~4.1]{demlow2009higher}; compare also to \cite[Lemmas~20-23]{bonito2020finite}, \cite[Lemma~4.4]{dziuk2013finite} and \cite[Lemma~4.1]{antonietti2014high}.
For a general practical curving element map that is used to construct the computational mesh, Assumption~\ref{lem:parametric-geometry} could be assessed numerically for the mesh family used in the experiments. 

The following standard norm equivalence results and scaled trace inequality will also be used repeatedly in later numerical analysis. 
We refer to \cite{dziuk2013finite,antonietti2014high,larsson2017continuous,bonito2020finite,elliott2021unified} for their proofs and detailed discussions.

\begin{lemma}[Norm equivalence]
\label{lem:norm-trace}
For $v\in H^1(\Gamma)$ and $v_h\in H^1(\Gamma_h)$, the uniform $L^2$- and $H^1$-norm equivalences are valid as follows:
\begin{equation}\label{eq:extension-norm-equivalence}
\begin{aligned}
  \norm{v}_{L^2(\Gamma)}&\simeq\norm{v^e}_{L^2(\Gammah)},
  &
  \norm{\nablag v}_{L^2(\Gamma)}
  &\simeq\norm{\nablagh v^e}_{L^2(\Gammah)},
  \\
  \norm{v_h^l}_{L^2(\Gamma)}&\simeq\norm{v_h}_{L^2(\Gammah)},
  &
  \norm{\nablag v_h^l}_{L^2(\Gamma)}
  &\simeq\norm{\nablagh v_h}_{L^2(\Gammah)}.
\end{aligned}
\end{equation}
For an integer $2\le m\le s_k$ fixed independently of $h$, let
$v\in H^m(\Gamma)$ and let $v_h$ be elementwise $H^m$ on $\Gamma_h$.
We define the broken seminorm 
\[
  |w|_{H^j(\mathcal K_h)}^2
  :=
  \sum_{K\in\mathcal K_h}
  \norm{D_{\Gamma_h}^j w}_{L^2(K)}^2,
  \qquad
  |z|_{H^j(\mathcal K_h^l)}^2
  :=
  \sum_{K^l\in\mathcal K_h^l}
  \norm{D_\Gamma^j z}_{L^2(K^l)}^2.
\]
Here $D_\Gamma^j$ and $D_{\Gamma_h}^j$ denote the corresponding tangential derivatives, taken elementwise on the broken meshes, and
$D_\Gamma^0=D_{\Gamma_h}^0=I$.
Then
\begin{equation}\label{eq:higher-order-norm-equivalence}
\begin{aligned}
  |v^e|_{H^m(\mathcal K_h)}
  &\le C\left(
    \sum_{j=1}^m |v|_{H^j(\Gamma)}^2
  \right)^{1/2},
  &
  |v|_{H^m(\Gamma)}
  &\le C\left(
    \sum_{j=1}^m |v^e|_{H^j(\mathcal K_h)}^2
  \right)^{1/2},
  \\
  |v_h^l|_{H^m(\mathcal K_h^l)}
  &\le C\left(
    \sum_{j=1}^m |v_h|_{H^j(\mathcal K_h)}^2
  \right)^{1/2},
  &
  |v_h|_{H^m(\mathcal K_h)}
  &\le C\left(
    \sum_{j=1}^m |v_h^l|_{H^j(\mathcal K_h^l)}^2
  \right)^{1/2}.
\end{aligned}
\end{equation}
The constant $C$ is independent of $h$. Consequently, with the definitions
\begin{equation*}
  \norm{w}_{H^m(\mathcal K_h)}^2
  :=
  \sum_{K\in\mathcal K_h}\sum_{j=0}^{m}
    \norm{D_{\Gamma_h}^{\,j}w}_{L^2(K)}^2,
\quad
  \norm{z}_{H^m(\mathcal K_h^l)}^2
  :=
  \sum_{K^l\in\mathcal K_h^l}\sum_{j=0}^{m}
    \norm{D_{\Gamma}^{\,j}z}_{L^2(K^l)}^2,
\end{equation*}
one has the uniform full-norm equivalences
\begin{equation}\label{eq:broken-full-norm-equivalence}
  \norm{v^e}_{H^m(\mathcal K_h)}
  \simeq
  \norm{v}_{H^m(\Gamma)},
  \qquad
  \norm{v_h^l}_{H^m(\mathcal K_h^l)}
  \simeq
  \norm{v_h}_{H^m(\mathcal K_h)}.
\end{equation}
\end{lemma}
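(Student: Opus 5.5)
The plan is to reduce every estimate to a single curved element $K$ and its lift $K^l=p(K)$. The change of variables $x\mapsto p(x)$ relates integrals over the two, and the tangential derivatives of $v^e$ and $v_h^l$ are expressed through the chain rule in terms of $B_h$ and its derivatives. Summing over elements then gives the global statements.

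\emph{$L^2$ and $H^1$ equivalences.} Since $ds=\mu_h\,ds_h$ and \eqref{eq:mu-errors} gives $\mu_h\in[1-Ch^{r+1},1+Ch^{r+1}]$, we get $\norm{v}_{L^2(\Gamma)}^2=\int_{\Gamma_h}|v^e|^2\mu_h\,ds_h$. For $h$ small this is uniformly equivalent to $\norm{v^e}^2_{L^2(\Gamma_h)}$, and the same computation handles $v_h^l$. For gradients, \eqref{eq:gradient-transform} gives $\nablagh v^e=B_h^T(\nablag v)^e$ and $(\nablag v_h^l)^e=B_h^{-T}\nablagh v_h$. The bound \eqref{eq:Bh-bounded} on $B_h$ and $B_h^{-1}$, as maps between the two tangent spaces, yields pointwise two-sided bounds. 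Combined with the measure equivalence, this gives \eqref{eq:extension-norm-equivalence}.

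\emph{Higher-order seminorms.} I differentiate the identity $\nablagh v^e=B_h^T(\nablag v)^e$ elementwise $m-1$ more times. By Leibniz' rule, $D^m_{\Gamma_h}v^e$ is a sum of terms $D^{a}_{\Gamma_h}(B_h^T\cdots)\,(D^j_\Gamma v)^e$ with $1\le j\le m$. So the key intermediate claim is uniform boundedness of the elementwise derivatives of the geometric factors, namely
\[
\norm{D_{\Gamma_h}^{a}P_h}_{L^\infty(K)}+\norm{D_{\Gamma_h}^{a}(P^e(I-d\mathcal W))}_{L^\infty(K)}\le C,\qquad 0\le a\le m-1.
\]
For the exact factors I use $\Gamma\in C^{s_k,1}$: then $d$, $p$, $\vn$, $\mathcal W$ have bounded derivatives of the required order in $U_{\delta_0}$, and tangential differentiation along $\Gamma_h$ only adds factors of $P_h$. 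For $P_h$ and $\vn_h$, I pull back to $\widehat T$ via $F_K^r$. In reference coordinates, $\partial^\alpha\vn_h\circ F_K^r$ is controlled by $|D^jF_K^r|\lesssim h_T^j$ from \eqref{eq:curved-map-derivative-bounds} for $j\le r$, and $D^jF_K^r=0$ for $j>r$. The lower bound \eqref{eq:parametric-shape-regularity} controls the inverse metric, and \eqref{eq:curved-map-bilipschitz} gives injectivity. Converting back to physical derivatives costs $h_T^{-1}$ per derivative, which exactly cancels the $h_T^{j}$ scaling, so $D^a_{\Gamma_h}\vn_h=O(1)$ uniformly. (Alternatively, $\vn_h$ could be compared with $\vn^e$ via \eqref{eq:scaled-distance-defect}, but the scaling argument suffices.) With these bounds, Leibniz' rule gives the first inequality in \eqref{eq:higher-order-norm-equivalence}, with lower-order terms $|v|_{H^j}$, $j<m$, appearing as stated. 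The reverse direction is analogous: write $(\nablag v)^e=B_h^{-T}\nablagh v^e$ and bound derivatives of $B_h^{-1}$ through the derivatives of $B_h$ and \eqref{eq:Bh-bounded}, using the formula for the derivative of an inverse. Since $v$ is smooth across elements, the broken seminorms over $\mathcal K_h^l$ coincide with the global one on $\Gamma$. For $v_h$ and $v_h^l$ the same argument applies with the roles of $p$ and $p^{-1}$ exchanged, working elementwise because $v_h$ is only piecewise $H^m$.

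\emph{Full norms.} The full-norm equivalences \eqref{eq:broken-full-norm-equivalence} follow by summing the seminorm bounds over $j\le m$ together with the $L^2$ equivalence.

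\emph{Main obstacle.} The main difficulty is the uniform bound on high derivatives of the discrete geometry ($\vn_h$, $P_h$, and hence $B_h^{\pm1}$) on curved elements when $m>r$. I plan to handle this with the reference-element scaling argument above: all derivatives of order above $r$ of the polynomial map $F_K^r$ vanish, and each of the lower-order ones carries exactly the power of $h_T$ needed.
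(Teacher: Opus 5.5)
Your proposal is correct. The paper gives no proof of this lemma and only refers to the literature. There, the equivalences come from the same mechanism you use: the change of variables with $\mu_h$, the transformation \eqref{eq:gradient-transform}, and Leibniz/chain-rule expansions with uniformly bounded geometric factors. However, those sources obtain the derivative bounds on the discrete geometry for particular discrete surfaces, such as affine meshes or interpolated closest-point/parametric surfaces.

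Your version differs in where those bounds come from. You get the $\mathcal O(1)$ bounds on $D^a_{\Gamma_h}\vn_h$ and $D^a_{\Gamma_h}P_h$ solely from \eqref{eq:parametric-shape-regularity}--\eqref{eq:curved-map-derivative-bounds} and from $D^jF_K^r=0$ for $j>r$. This is in effect the rescaling $\Phi_h(y)=F_K^r(y/h_K)$ that the paper itself uses in Step~1 of Lemma~\ref{lem:local-laplacian-perturbation}. It makes the lemma genuinely available for the general curving maps of Assumption~\ref{ass:computational-element-maps}, which the cited sources do not strictly cover. Your appeal to $\Gamma\in C^{s_k,1}$ is also exactly what is needed: differentiating $d\mathcal W$ up to $m-1\le s_k-1$ times requires $D^{s_k+1}d\in L^\infty$.

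One step needs tightening. $B_h$ is a rank-two $3\times3$ matrix, so the derivative-of-the-inverse formula cannot be applied to it literally. Work instead with $\mathcal B_h:=B_h+\vn^e\otimes\vn_h$.
\begin{itemize}
\item Writing $\xi=t+s\vn_h$ with $t\in T_x\Gammah$ gives $|\mathcal B_h\xi|^2=|B_ht|^2+s^2\ge c|\xi|^2$ by \eqref{eq:Bh-bounded}.
\item $B_h^{-1}$ coincides with $\mathcal B_h^{-1}P^e$ on $T_{p(x)}\Gamma$.
\item Then $D(\mathcal B_h^{-1})=-\mathcal B_h^{-1}(D\mathcal B_h)\mathcal B_h^{-1}$ gives the uniform elementwise derivative bounds for $B_h^{-1}$ and $B_h^{-T}$ that your reverse inequalities need.
\end{itemize}

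Alternatively, you can avoid $B_h^{-1}$ altogether. Compare $D^m_{\Gamma_h}$ on $K$ and $D^m_\Gamma$ on $K^l$ with $y$-derivatives of the common pullback $v^e\circ\Phi_h=v\circ\Phi_l$, where $\Phi_l=p\circ\Phi_h$. Both charts have $\mathcal O(1)$ derivatives up to order $s_k$ and uniformly elliptic metrics.
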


\begin{lemma}[Scaled trace inequalities]
For $z\in H^1(K)$ and $E\subset\partial K$,
\begin{equation}\label{eq:scaled-trace}
  \norm{z}_{L^2(E)}^2
  \le C\left(
    h_K^{-1}\norm{z}_{L^2(K)}^2
    +h_K\norm{\nablagh z}_{L^2(K)}^2
  \right).
\end{equation}
The same estimate holds on $(K^l,E^l)$ with $\nablag$ in place of $\nablagh$. 
\end{lemma}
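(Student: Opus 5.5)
We prove the scaled trace inequality \eqref{eq:scaled-trace} by pulling back to the reference triangle and then scaling. Fix $K=F_K^r(\widehat T)$ and an edge $E=F_K^r(\widehat e)$ with $\widehat e\subset\partial\widehat T$, and set $\widehat z:=z\circ F_K^r$. On the fixed reference triangle the classical trace inequality gives
\[
  \norm{\widehat z}_{L^2(\widehat e)}^2\le C\bigl(\norm{\widehat z}_{L^2(\widehat T)}^2+\norm{\widehat\nabla\widehat z}_{L^2(\widehat T)}^2\bigr),
\]
with $C$ depending only on $\widehat T$.

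\textbf{Jacobian bounds.} We quantify the change of variables using only Assumption~\ref{ass:computational-element-maps}. The surface Jacobian is $J_K:=|\partial_1F_K^r\times\partial_2F_K^r|$, which is the square root of the Gram determinant of $DF_K^r$. By \eqref{eq:parametric-shape-regularity}, the eigenvalues of $(DF_K^r)^TDF_K^r$ lie in $[ch_T^2,Ch_T^2]$, so $J_K\simeq h_T^2$. The edge Jacobian is $|DF_K^r\,\widehat t|$ for the unit tangent $\widehat t$ of $\widehat e$, and the same estimate gives $|DF_K^r\,\widehat t|\simeq h_T$. For the gradient, the chain rule gives $\widehat\nabla\widehat z=(DF_K^r)^T(\nablagh z)\circ F_K^r$, because $\nablagh z$ is tangent and the columns of $DF_K^r$ span $T\Gamma_h$. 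Hence $|\widehat\nabla\widehat z|\le Ch_T|\nablagh z|$.

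\textbf{Scaling.} Combining these bounds yields
\[
  \norm{z}_{L^2(E)}^2\le Ch_T\norm{\widehat z}_{L^2(\widehat e)}^2,\qquad
  \norm{\widehat z}_{L^2(\widehat T)}^2\le Ch_T^{-2}\norm{z}_{L^2(K)}^2,\qquad
  \norm{\widehat\nabla\widehat z}_{L^2(\widehat T)}^2\le C\norm{\nablagh z}_{L^2(K)}^2 .
\]
Inserting these into the reference trace inequality gives $\norm{z}_{L^2(E)}^2\le C\bigl(h_T^{-1}\norm{z}_{L^2(K)}^2+h_T\norm{\nablagh z}_{L^2(K)}^2\bigr)$. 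Since $h_K\simeq h_T$ by \eqref{eq:curved-map-bilipschitz}, this is \eqref{eq:scaled-trace}. Only the first-derivative bound \eqref{eq:parametric-shape-regularity} is used; no higher derivatives of $F_K^r$ are needed because only one derivative of $z$ appears.

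\textbf{The lifted pair $(K^l,E^l)$.} Here we compose with the lifted map $p\circ F_K^r$, and the main step is to show that this map satisfies the same two-sided Jacobian bounds. Its differential is $B_h\,DF_K^r$ in the sense of \eqref{eq:B-map}. By \eqref{eq:Bh-bounded}, $B_h$ and $B_h^{-1}$ are uniformly bounded, so $|B_hDF_K^r\xi|\simeq h_T|\xi|$. Equivalently, the measure quotients satisfy $\mu_h,\mu_{h,E}\simeq1$ by \eqref{eq:mu-errors}, and the gradients are related through \eqref{eq:gradient-transform}. Repeating the argument above with these bounds gives the estimate on $(K^l,E^l)$ with $\nablag$ in place of $\nablagh$. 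Alternatively, one can apply the $\Gamma_h$ estimate to $z^e$ and transfer each term using $\mu_h$, $\mu_{h,E}$ and \eqref{eq:gradient-transform}.

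Finally, all constants depend only on $\widehat T$ and on the constants in Assumptions~\ref{ass:computational-element-maps} and~\ref{lem:parametric-geometry}, so they are uniform in $K$ and $h$.
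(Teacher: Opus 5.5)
Your proof is correct: pulling back to $\widehat T$ and using the two-sided bound \eqref{eq:parametric-shape-regularity} for the area, arclength and gradient factors gives the estimate on $(K,E)$, and the uniform bounds on $B_h$ and $B_h^{-1}$ in \eqref{eq:Bh-bounded} give $p\circ F_K^r$ the same first-derivative bounds, which yields the estimate on $(K^l,E^l)$. The paper does not prove this lemma itself; it defers to the cited literature, where this reference-element scaling argument is the standard proof, so your proposal follows the intended route.
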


\subsection{Curved-edge conormals}
Let $E=\partial K^+\cap\partial K^-$.  Denote the outward unit conormals, tangent to $K^\pm$, by $\vnu_E^\pm$.  Let $\vnu_{E^l}^\pm$ be the corresponding exact conormals on the lifted edge $E^l=p(E)$.  
Since the lifted elements belong to the same smooth surface, $\vnu_{E^l}^++\vnu_{E^l}^-=0$.
On the other hand, the discrete surface is only $C^0$ and, in general, $\vnu_E^++\vnu_E^-\ne0$.

The estimates below are the higher-order counterparts of the conormal
estimates in Lemma~3.1 of \cite{larsson2017continuous}.  

\begin{lemma}[Conormal geometry]
\label{lem:conormal-geometry}
For any $E\in\Eh$ and $h$ sufficiently small,
\begin{align}
  \norm{(\vnu_{E^l}^{\,\pm})^e-P^e\vnu_E^\pm}_{L^\infty(E)}
  &\le Ch^{r+1},
  \label{eq:individual-conormal-comparison}
  \\
  \norm{\vnu_E^++\vnu_E^-}_{L^\infty(E)}
  &\le Ch^r,
  \label{eq:ambient-conormal-sum}
  \\
  \norm{P^e(\vnu_E^++\vnu_E^-)}_{L^\infty(E)}
  &\le Ch^{r+1}.
  \label{eq:tangential-conormal-sum}
\end{align}
Consequently, for $v\in H^2(\Gamma)$,
\begin{equation}\label{eq:local-smooth-geometric-jump}
  \norm{J_E(v^e)}_{L^2(E)}
  \le Ch^{r+1}\norm{(\nablag v)^e}_{L^2(E)},
\end{equation}
where
\begin{equation}\label{eq:JE-smooth}
  J_E(v^e)
  :=\vnu_E^+\cdot\nablagh v^e|_{K^+}
    +\vnu_E^-\cdot\nablagh v^e|_{K^-}.
\end{equation}
The global estimate is
\begin{equation}\label{eq:smooth-geometric-jump-global}
  \left(
    \sum_{E\in\Eh}h_E^{-1}
    \norm{J_E(v^e)}_{L^2(E)}^2
  \right)^{1/2}
  \le Ch^r\norm{v}_{H^2(\Gamma)}.
\end{equation}
\end{lemma}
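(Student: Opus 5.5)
The plan is to derive all four estimates from the tangential derivative of the closest-point map, $B_h$, together with Assumption~\ref{lem:parametric-geometry}.

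\textbf{Step 1: the individual conormal comparison \eqref{eq:individual-conormal-comparison}.} Fix $x\in E$ and let $\mathbf t_E$ be the unit tangent to the curve $E$ at $x$. This tangent is common to $K^+$ and $K^-$. The map $p$ sends $E$ onto $E^l$, so the exact unit tangent is
\[
  \mathbf t_{E^l}=\frac{B_h\mathbf t_E}{|B_h\mathbf t_E|}.
\]
The discrete conormal $\vnu_E^\pm$ is characterised as the unit vector in $T_xK^\pm$ orthogonal to $\mathbf t_E$ and pointing out of $K^\pm$. The exact conormal $(\vnu_{E^l}^\pm)^e$ is the unit vector in $T_{p(x)}\Gamma$ orthogonal to $\mathbf t_{E^l}$ and pointing out of $K^{l,\pm}$.

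Now consider $P^e\vnu_E^\pm$.
\begin{itemize}
\item Its deviation from $B_h\vnu_E^\pm$ is $dP^e\mathcal W\vnu_E^\pm$, of size $O(h^{r+1})$ by \eqref{eq:dist-vel}.
\item Its inner product with $B_h\mathbf t_E$ is $O(h^{r+1})$. This follows from $B_h\vnu\cdot B_h\mathbf t=\vnu\cdot B_h^TB_h\mathbf t$ and \eqref{eq:PhBh-est}, since $\vnu\cdot P_h\mathbf t=0$.
\item Its norm is $1+O(h^{r+1})$, by the same argument applied to $|B_h\vnu|^2$.
\end{itemize}
So $P^e\vnu_E^\pm$ is an $O(h^{r+1})$-perturbation of a unit tangent vector of $\Gamma$ orthogonal to $\mathbf t_{E^l}$. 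Orientation (outward pointing) is preserved because $B_h$ is a small perturbation of an orientation-consistent isomorphism. The tangent plane is two-dimensional, so such a vector is unique up to sign, and this gives \eqref{eq:individual-conormal-comparison}.

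I expect this step to be the main obstacle. One must make sure that only $B_h^TB_h-P_h$, and not $\vn^e-\vn_h$ (which is only $O(h^r)$), enters at order $h^{r+1}$. The edge measure quotient $\mu_{h,E}$, controlled by \eqref{eq:mu-errors}, can be used to control the normalisation of $\mathbf t_{E^l}$.

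\textbf{Step 2: the conormal sums \eqref{eq:tangential-conormal-sum} and \eqref{eq:ambient-conormal-sum}.} Adding the two comparisons from Step~1 and using $\vnu_{E^l}^++\vnu_{E^l}^-=0$ gives \eqref{eq:tangential-conormal-sum}. For the ambient sum \eqref{eq:ambient-conormal-sum}, the normal component satisfies
\[
  \vn^e\cdot\vnu_E^\pm=(\vn^e-\vn_h^\pm)\cdot\vnu_E^\pm=O(h^r)
\]
by \eqref{eq:dist-vel}. Combining this with the tangential bound yields $O(h^r)$.

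\textbf{Step 3: the local jump estimate \eqref{eq:local-smooth-geometric-jump}.} By \eqref{eq:gradient-transform},
\[
  \vnu_E^\pm\cdot\nablagh v^e=B_h\vnu_E^\pm\cdot(\nablag v)^e .
\]
Write $B_h\vnu_E^\pm=P^e\vnu_E^\pm-dP^e\mathcal W P_h\vnu_E^\pm$. Summing over the two sides gives $P^e(\vnu_E^++\vnu_E^-)$ plus $O(h^{r+1})$ terms, because the extension $(\nablag v)^e$ is single-valued on $E$. Then \eqref{eq:tangential-conormal-sum} yields \eqref{eq:local-smooth-geometric-jump}.

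\textbf{Step 4: the global estimate \eqref{eq:smooth-geometric-jump-global}.} Square \eqref{eq:local-smooth-geometric-jump}, multiply by $h_E^{-1}$, and apply the scaled trace inequality \eqref{eq:scaled-trace} to $(\nablag v)^e$ on $K^+$. This gives
\[
  h_E^{-1}\norm{J_E(v^e)}_{L^2(E)}^2\lesssim h^{2r+2}h_E^{-1}\bigl(h_K^{-1}\norm{(\nablag v)^e}_{L^2(K)}^2+h_K\norm{\nablagh(\nablag v)^e}_{L^2(K)}^2\bigr).
\]
Sum over edges and use the norm equivalence of Lemma~\ref{lem:norm-trace} to bound the right-hand side by $h^{2r+2}h^{-2}\norm{v}_{H^2(\Gamma)}^2$. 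This gives $Ch^r\norm{v}_{H^2(\Gamma)}$, assuming quasi-uniformity or using elementwise powers $h_E^{r+1}$ consistently.
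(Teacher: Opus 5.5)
Your proposal is correct and follows the paper's route. Steps~2--4 coincide with the paper's proof: the normal component is bounded via $\vn^e-\vn_h^\pm$, you use the identity $J_E(v^e)=P^e(\vnu_E^++\vnu_E^-)\cdot(P^e-d\mathcal W)(\nablag v)^e$, and you finish with the scaled trace inequality. Your Step~1 writes out, using only $d$ and $P_h-B_h^TB_h$, the argument the paper merely cites from Antonietti et al.; the outward sign there is best justified by noting that $B_h$ is the differential of the diffeomorphism $p|_{K^\pm}$ onto $(K^\pm)^l$.

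In Step~4 no quasi-uniformity is needed. As the paper does, use the edgewise form of Assumption~\ref{lem:parametric-geometry} to get the factor $h_E^{r+1}$. Then $h_E^{2r+1}$ times the trace inequality gives $h_K^{2r}\norm{(\nablag v)^e}_{L^2(K)}^2+h_K^{2r+2}\norm{\nablagh(\nablag v)^e}_{L^2(K)}^2$ elementwise.
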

 
\begin{proof}
The conormal estimate \eqref{eq:individual-conormal-comparison} follows the argument of \cite[Lemma~4.1]{antonietti2014high}, now applied under Assumption \ref{lem:parametric-geometry} (Parametric geometry estimates). 
Using \eqref{eq:individual-conormal-comparison} and the fact that $(\vnu_{E^l}^{\,+})^e+(\vnu_{E^l}^{\,-})^e=0$, we obtain 
\[
  \norm{P^e(\vnu_E^++\vnu_E^-)}_{L^\infty(E)}
  \le Ch^{r+1}.
\]

Furthermore, due to the fact that $\vn_h^\pm\cdot\vnu_E^\pm=0$ and estimate \eqref{eq:dist-vel},
\[
  |\vn^e\cdot\vnu_E^\pm|
  =
  |(\vn^e-\vn_h^\pm)\cdot\vnu_E^\pm|
  \le Ch^r.
\]
Decomposing $\vnu_E^++\vnu_E^-$ into its exact tangential and normal
components therefore gives
\[
  \norm{\vnu_E^++\vnu_E^-}_{L^\infty(E)}
  \le Ch^r.
\]
This proves \eqref{eq:ambient-conormal-sum}.

The closest-point gradient transformation gives $\nablagh v^e=P_h^\pm(P^e-d\mathcal W)(\nablag v)^e$ on $K^\pm$.  Since $\vnu_E^\pm$ is tangent to $K^\pm$ and $(P^e-d\mathcal W)(\nablag v)^e$ is tangential to $\Gamma$ at $p(x)$, it follows that

\[
\begin{aligned}
  J_E(v^e)
  &=(\vnu_E^++\vnu_E^-)\cdot(P^e-d\mathcal W)(\nablag v)^e=P^e(\vnu_E^++\vnu_E^-)\cdot(P^e-d\mathcal W)(\nablag v)^e.
\end{aligned}
\]
The uniform boundedness of $P^e$ and $d\mathcal W$ and \eqref{eq:tangential-conormal-sum} now imply
\[
  \norm{J_E(v^e)}_{L^2(E)}
  \le Ch^{r+1}\norm{(\nablag v)^e}_{L^2(E)},
\]
which is \eqref{eq:local-smooth-geometric-jump}.

Finally, using the corresponding local estimate with $h$ replaced by $h_E$, followed by the scaled trace inequality \eqref{eq:scaled-trace}, gives
\[
\begin{aligned}
  \sum_{E\in\Eh}h_E^{-1}\norm{J_E(v^e)}_{L^2(E)}^2
  &\le
  C\sum_{E\in\Eh}h_E^{2r+1}\norm{(\nablag v)^e}_{L^2(E)}^2
  \\
  &\le
  C\sum_{K\in\Kh}
  \left(
    h_K^{2r}\norm{(\nablag v)^e}_{L^2(K)}^2
    +h_K^{2r+2}\norm{\nablagh(\nablag v)^e}_{L^2(K)}^2
  \right)
  \\
  &\le Ch^{2r}\norm{v}_{H^2(\Gamma)}^2,
\end{aligned}
\]
where shape regularity, finite overlap, and the norm equivalences in Lemma \ref{lem:norm-trace} have been used, and this leads to \eqref{eq:smooth-geometric-jump-global}.
\end{proof}

%=======================================================================
\section{Finite element space and discretization}
%=======================================================================

\subsection{Mapped spaces and the conormal-flux jump}

Fix a polynomial degree $k\ge2$ and set $\ell:=k-2$. 
The geometric approximation degree $r\ge1$ is independent of $k$.  The mapped finite element
spaces are
\begin{equation}\label{eq:Wh}
  W_h:=\left\{
    v_h\in C^0(\Gammah):
    v_h\circ F_K^r\in\mathcal P_k(\widehat T)
    \quad\forall K\in\Kh
  \right\}
\end{equation}
and
\begin{equation}\label{eq:Qh}
  Q_h:=\left\{
    q_h\in L^2(\Gammah):
    q_h\circ F_K^r\in\mathcal P_{k-2}(\widehat T)
    \quad\forall K\in\Kh
  \right\}.
\end{equation}
We note that $F_K^r$ is the general computational element map in \eqref{eq:curved-element-map} and not necessarily defined via closest-point projection $p$.  
The choice $\ell=k-2$ is the smallest natural lifting degree: on an affine
element it contains the Laplacian of every degree-$k$ polynomial, while on a curved element it approximates a smooth Laplacian with order $k-1$.  All
constants below may depend on the fixed degrees $k$ and $r$, but not on $h$.
We impose the discrete mean condition through
\begin{equation}\label{eq:Whsharp}
  W_{h,\#}:=\{v_h\in W_h:(v_h,1)_{\Gammah}=0\}.
\end{equation}

For a continuous function that is $H^2$ on each element, recall the two-sided
conormal-flux jump is defined by
\begin{equation}\label{eq:JE}
  J_E(v)
  :=\vnu_E^+\cdot\nablagh v^+
    +\vnu_E^-\cdot\nablagh v^-.
\end{equation}
For a discontinuous scalar $q$, set
\begin{equation*}
  \mean{q}:=\tfrac12(q^++q^-).
\end{equation*}
The definition \eqref{eq:JE} is invariant under interchanging $K^+$ and $K^-$.  
If the adjacent elements are coplanar, it reduces to the usual notion of jumps of the normal derivative.  

\subsection{Lifting and reconstructed Laplacian}

\begin{definition}[Scalar surface lifting]\label{def:lifting}
For $\phi\in L^2(\Eh)$, define $R_h\phi\in Q_h$ by
\begin{equation}\label{eq:lifting}
  (R_h\phi,q_h)_{\Gammah}
  =\sum_{E\in\Eh}(\phi,\mean{q_h})_E
  \qquad\forall q_h\in Q_h.
\end{equation}
\end{definition}
The lifting is global notation for a sum of local edge liftings. The lifting operator has the following stability property.   

\begin{lemma}[Lifting stability]\label{lem:lifting-stability}
There is a constant $C_R>0$ independent of $h$ such that
\begin{equation}\label{eq:lifting-stability}
  \norm{R_h\phi}_{L^2(\Gammah)}^2
  \le C_R\sum_{E\in\Eh}h_E^{-1}\norm{\phi}_{L^2(E)}^2.
\end{equation}
\end{lemma}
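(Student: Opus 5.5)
The plan is a duality argument: test the defining relation \eqref{eq:lifting} with the lifting itself and bound the resulting edge terms by a discrete trace inverse inequality on $Q_h$. Choosing $q_h=R_h\phi\in Q_h$ in \eqref{eq:lifting} gives
\[
  \norm{R_h\phi}_{L^2(\Gammah)}^2
  =\sum_{E\in\Eh}(\phi,\mean{R_h\phi})_E
  \le\Big(\sum_{E\in\Eh}h_E^{-1}\norm{\phi}_{L^2(E)}^2\Big)^{1/2}
     \Big(\sum_{E\in\Eh}h_E\norm{\mean{R_h\phi}}_{L^2(E)}^2\Big)^{1/2},
\]
by Cauchy--Schwarz on each edge and then over the edges. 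It remains to show $\sum_E h_E\norm{\mean{q_h}}_{L^2(E)}^2\le C\norm{q_h}_{L^2(\Gammah)}^2$ for every $q_h\in Q_h$; dividing by $\norm{R_h\phi}_{L^2(\Gammah)}$ and squaring then gives \eqref{eq:lifting-stability}.

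For this discrete trace bound, write $\norm{\mean{q_h}}_{L^2(E)}^2\le\frac12(\norm{q_h^+}_{L^2(E)}^2+\norm{q_h^-}_{L^2(E)}^2)$. Each element $K$ has three edges and $h_E\simeq h_K$ by shape regularity, so it suffices to prove the local inverse trace inequality
\[
  h_K\norm{q_h}_{L^2(E)}^2\le C\norm{q_h}_{L^2(K)}^2,\qquad E\subset\partial K,\ q_h\circ F_K^r\in\mathcal P_{k-2}(\widehat T).
\]
I would prove this by pulling back to $\widehat T$ with $\widehat q:=q_h\circ F_K^r$. The two Jacobians scale as follows.
\begin{itemize}
\item By \eqref{eq:parametric-shape-regularity}, the area element $|\partial_1F_K^r\times\partial_2F_K^r|$ is bounded above and below by multiples of $h_T^2$. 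To see this, write it as $\sqrt{\det((DF_K^r)^TDF_K^r)}$; the eigenvalues of the Gram matrix lie in $[ch_T^2,Ch_T^2]$.
\item The edge length element $|DF_K^r\widehat t|$ along a reference edge with unit tangent $\widehat t$ is comparable to $h_T$, again by \eqref{eq:parametric-shape-regularity}.
\end{itemize}
Hence $\norm{q_h}_{L^2(E)}^2\simeq h_T\norm{\widehat q}_{L^2(\widehat e)}^2$ and $\norm{q_h}_{L^2(K)}^2\simeq h_T^2\norm{\widehat q}_{L^2(\widehat T)}^2$. On the finite-dimensional space $\mathcal P_{k-2}(\widehat T)$, all norms are equivalent, so $\norm{\widehat q}_{L^2(\widehat e)}\le \widehat C\norm{\widehat q}_{L^2(\widehat T)}$ with $\widehat C$ depending only on $k$. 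Combining these gives $h_K\norm{q_h}_{L^2(E)}^2\le C h_T^2\norm{\widehat q}^2_{L^2(\widehat T)}\le C\norm{q_h}_{L^2(K)}^2$.

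The only genuinely new point compared with the affine case is that $Q_h$ consists of mapped, nonpolynomial functions on $K$. One might be tempted to use the scaled trace inequality \eqref{eq:scaled-trace} together with an inverse estimate for $\nablagh q_h$. On curved elements, however, that inverse estimate would need the higher-derivative bounds \eqref{eq:curved-map-derivative-bounds}. Working entirely on the reference element avoids this: only the two-sided Jacobian bounds from Assumption~\ref{ass:computational-element-maps} are needed, and the constant $C_R$ depends only on $k$, $r$ and the shape-regularity constants, not on $h$.
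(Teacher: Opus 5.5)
Your proof is correct and follows the paper's argument. You test \eqref{eq:lifting} with $q_h=R_h\phi$, apply the weighted Cauchy--Schwarz inequality, and bound $\sum_{E}h_E\norm{\mean{q_h}}_{L^2(E)}^2\le C\norm{q_h}_{L^2(\Gammah)}^2$. You prove this last bound by pulling back directly to $\widehat T$, whereas the paper combines \eqref{eq:scaled-trace} with an inverse inequality; this is a minor variation of the same step.

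One correction to your closing remark: the paper's route does not need \eqref{eq:curved-map-derivative-bounds}. Since $\nablagh q_h\circ F_K^r=DF_K^r\,G^{-1}\nabla\widehat q$ with $G=(DF_K^r)^TDF_K^r$, the first-order inverse estimate $\norm{\nablagh q_h}_{L^2(K)}\le Ch_K^{-1}\norm{q_h}_{L^2(K)}$ also follows from \eqref{eq:parametric-shape-regularity} alone. Second derivatives of the map only enter second-order inverse estimates.
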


\begin{proof}
The result follows by choosing $q_h=R_h\phi$ in \eqref{eq:lifting}, applying the weighted Cauchy-Schwarz, and using the scaled trace estimate \eqref{eq:scaled-trace} with inverse inequality.
\end{proof}

Let $\Pi_{Q_h}:L^2(\Gammah)\to Q_h$ be the elementwise $L^2$ projection.  
Since $R_h\phi\in Q_h$, the exact lifting identity for a general $g\in L^2(\Gammah)$ is
\begin{equation}\label{eq:projection-lifting-identity}
  (g,R_h\phi)_{\Gammah}
  =(\Pi_{Q_h}g,R_h\phi)_{\Gammah}
  =\sum_{E\in\Eh}(\mean{\Pi_{Q_h}g},\phi)_E.
\end{equation}
If $r=1$, the elements are affine and $\deltagh W_h|_K\subset\mathcal P_{k-2}(K)=Q_h|_K$. If $r>1$, $F_K^r$ is not affine and in general
\begin{equation}\label{eq:no-laplacian-inclusion}
  \deltagh W_h\not\subset Q_h.
\end{equation}
Thus the projection in \eqref{eq:projection-lifting-identity} can be removed in the case of $r=1$ but not on a curved mesh.

\begin{definition}[Reconstructed surface Laplacian]
For $v_h\in W_h$, we define the reconstructed surface Laplacian as  
\begin{equation}\label{eq:reconstructed-laplace}
  \tdeltagh v_h:=\deltagh v_h-R_hJ(v_h),
\end{equation}
where $J(v_h)|_E=J_E(v_h)$.
\end{definition}

\subsection{Discrete problem}

Even if $(f,1)_\Gamma=0$, the closest-point extension $f^e$ need not have zero
mean on $\Gammah$.  We therefore define 
\begin{equation}\label{eq:corrected-load}
  f_h:=f^e-\frac{(f^e,1)_{\Gammah}}{|\Gammah|},
  \qquad
  \ell_h(v_h):=(f_h,v_h)_{\Gammah}.
\end{equation}
For a fixed stabilization parameter $\beta>0$, we define the discrete bilinear form 
\begin{equation}\label{eq:discrete-form}
  a_h(v_h,w_h)
  :=(\tdeltagh v_h,\tdeltagh w_h)_{\Gammah}
  +\beta\sum_{E\in\Eh}h_E^{-1}
  (J_E(v_h),J_E(w_h))_E.
\end{equation}
The discrete problem is to find $u_h\in W_{h,\#}$ such that
\begin{equation}\label{eq:discrete-problem}
  a_h(u_h,v_h)=\ell_h(v_h)
  \qquad\forall v_h\in W_{h,\#}.
\end{equation}

Using \eqref{eq:projection-lifting-identity}, the exact expanded form on a
curved mesh is
\begin{align}\label{eq:expanded-form}
  a_h(v_h,w_h)
  ={}&(\deltagh v_h,\deltagh w_h)_{\Gammah}
  -\sum_{E\in\Eh}
    (\mean{\Pi_{Q_h}\deltagh v_h},J_E(w_h))_E
  \notag\\
  &-\sum_{E\in\Eh}
    (\mean{\Pi_{Q_h}\deltagh w_h},J_E(v_h))_E
  +(R_hJ(v_h),R_hJ(w_h))_{\Gammah}
  \notag\\
  &+\beta\sum_{E\in\Eh}h_E^{-1}
    (J_E(v_h),J_E(w_h))_E.
\end{align}
For $r=1$, the projections can be omitted and
\eqref{eq:expanded-form} equals the corresponding symmetric $C^0$ interior
penalty form of \cite{larsson2017continuous} plus the additional positive
lifting-lifting term.

\subsection{Stability and well-posedness}

For a continuous, elementwise-$H^2$ function, we define the DG semi-norm as 
\begin{equation}\label{eq:energy-seminorm}
  \enorm{v}^2
  :=\norm{\deltagh v}_{L^2(\Gammah)}^2
  +\sum_{E\in\Eh}h_E^{-1}\norm{J_E(v)}_{L^2(E)}^2.
\end{equation}

\begin{lemma}[Kernel of $\enorm{\cdot}$]\label{lem:kernel}
If $v_h\in W_h$ and $\enorm{v_h}=0$, then $v_h$ is constant on $\Gammah$.
Consequently, \eqref{eq:energy-seminorm} is a norm on $W_{h,\#}$.
\end{lemma}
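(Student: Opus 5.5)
The plan is an elementwise Green's formula on the curved mesh: it turns vanishing of $\enorm{v_h}$ into the statement that $\nablagh v_h\equiv0$. Suppose $v_h\in W_h$ with $\enorm{v_h}=0$. Then $\deltagh v_h=0$ on every $K\in\Kh$ and $J_E(v_h)=0$ on every $E\in\Eh$. On each curved element $K$, $v_h$ is smooth up to the boundary, since $v_h\circ F_K^r$ is a polynomial and $F_K^r$ is a smooth injective immersion by Assumption~\ref{ass:computational-element-maps}. So the surface divergence theorem on the smooth patch $K$, with outward conormal $\vnu_E^\pm$ on its edges, gives
\[
  \norm{\nablagh v_h}_{L^2(K)}^2
  =-(\deltagh v_h,v_h)_K+\sum_{E\subset\partial K}(\vnu_E\cdot\nablagh v_h,v_h)_E .
\]
A possible curvature contribution does not appear here, because $\nablagh v_h$ is tangential to $K$. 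The divergence theorem for tangential fields on a surface with boundary has no mean-curvature term.

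Summing over all $K\in\Kh$ and using that $\Gammah$ is closed, so every edge is interior, yields
\[
  \norm{\nablagh v_h}_{L^2(\Gammah)}^2
  =-(\deltagh v_h,v_h)_{\Gammah}+\sum_{E\in\Eh}(J_E(v_h),v_h)_E .
\]
Here the global continuity of $v_h$ lets us pull the single trace $v_h|_E$ out of the two one-sided conormal terms. Those terms then combine exactly into the two-sided jump $J_E(v_h)$ of \eqref{eq:JE}. Crucially, no assumption $\vnu_E^++\vnu_E^-=0$ is needed. Both terms on the right vanish, hence $\nablagh v_h=0$ elementwise.

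It remains to show that $v_h$ is constant. On each $K$, which is connected, $\nablagh v_h=0$ forces $v_h$ to be constant. Pulling back to $\widehat T$, $\nabla(v_h\circ F_K^r)=(DF_K^r)^T(\nablagh v_h)\circ F_K^r=0$, and $\widehat T$ is connected. Continuity of $v_h$ across edges, together with connectedness of $\Gammah$ (the image of the connected $\gamma_h$ under the continuous map $p_h^r$), makes the constants agree globally. Finally, if $v_h\in W_{h,\#}$ is constant, then $(v_h,1)_{\Gammah}=0$ forces $v_h=0$, so \eqref{eq:energy-seminorm} is a norm on $W_{h,\#}$. Homogeneity and the triangle inequality are immediate from its definition.

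The only delicate step is justifying the elementwise integration by parts on a curved parametric element. This requires sufficient regularity of $v_h$ and of $\partial K$, and the correct identification of the outward conormal with the $\vnu_E^\pm$ used in $J_E$. I would handle it by pulling back to $\widehat T$ via $F_K^r$ and applying the divergence theorem in parameter space, using that $F_K^r$ is a polynomial immersion. This is routine but should be stated explicitly.
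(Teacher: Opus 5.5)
Your proposal is correct and follows essentially the same route as the paper. You integrate by parts elementwise on the curved elements, use the continuity of $v_h$ to combine the two one-sided conormal terms into $J_E(v_h)$ so that $\nablagh v_h=0$, and then use connectivity and continuity to obtain a global constant that vanishes in $W_{h,\#}$. Your extra justification of the curved-element Green's formula (no curvature term for tangential fields, pullback via $F_K^r$) fills in details the paper leaves implicit.
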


\begin{proof}
Elementwise integration by parts on the curved elements, continuity of $v_h$,
and \eqref{eq:JE} give
\begin{equation*}
  \sum_{K\in\Kh}\norm{\nablagh v_h}_{L^2(K)}^2
  =-\sum_{K\in\Kh}(\deltagh v_h,v_h)_K
   +\sum_{E\in\Eh}(J_E(v_h),v_h)_E=0.
\end{equation*}
Thus $v_h$ is constant on every element.  Connectivity and continuity imply
that it is one global constant, which vanishes in $W_{h,\#}$.
\end{proof}

\begin{lemma}[Continuity and coercivity]\label{lem:stability}
For every fixed $\beta>0$, there are constants $C_\beta,c_\beta>0$,
independent of $h$, such that
\begin{align}
  |a_h(v_h,w_h)|
  &\le C_\beta\enorm{v_h}\enorm{w_h},
  \label{eq:continuity}
  \\
  a_h(v_h,v_h)
  &\ge c_\beta\enorm{v_h}^2
  \label{eq:coercivity}
\end{align}
for all $v_h,w_h\in W_h$.  The coercivity constant may tend to zero as
$\beta\downarrow0$.
\end{lemma}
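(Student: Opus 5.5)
Continuity and coercivity follow almost entirely from the completed-square structure of \eqref{eq:discrete-form}, together with Lemma~\ref{lem:lifting-stability}. No inverse estimate on $\deltagh v_h$ is needed, so curved elements where $\deltagh W_h\not\subset Q_h$ cause no difficulty. Throughout I abbreviate the jump part of the energy norm as
\[
|v|_J^2:=\sum_{E\in\Eh}h_E^{-1}\norm{J_E(v)}_{L^2(E)}^2 .
\]

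\textbf{Continuity.} By Lemma~\ref{lem:lifting-stability} applied with $\phi=J(v_h)$,
\[
\norm{\tdeltagh v_h}_{L^2(\Gammah)}\le \norm{\deltagh v_h}_{L^2(\Gammah)}+C_R^{1/2}|v_h|_J\le (1+C_R^{1/2})\enorm{v_h},
\]
and the same bound holds for $w_h$. Applying Cauchy--Schwarz to both terms of \eqref{eq:discrete-form} then gives
\[
|a_h(v_h,w_h)|\le (1+C_R^{1/2})^2\enorm{v_h}\enorm{w_h}+\beta |v_h|_J|w_h|_J ,
\]
so $C_\beta:=(1+C_R^{1/2})^2+\beta$ works.

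\textbf{Coercivity.} We have
\[
a_h(v_h,v_h)=\norm{\deltagh v_h-R_hJ(v_h)}^2+\beta|v_h|_J^2 .
\]
Using $(a-b)^2\ge(1-\epsilon)a^2+(1-\epsilon^{-1})b^2$ for $\epsilon\in(0,1)$, and bounding $\norm{R_hJ(v_h)}^2\le C_R|v_h|_J^2$ (the coefficient $1-\epsilon^{-1}$ is negative, so the upper bound is what is needed), we get
\[
a_h(v_h,v_h)\ge(1-\epsilon)\norm{\deltagh v_h}^2+\bigl(\beta-(\epsilon^{-1}-1)C_R\bigr)|v_h|_J^2 .
\]
Choose $\epsilon=C_R/(C_R+\beta/2)\in(0,1)$, so that $(\epsilon^{-1}-1)C_R=\beta/2$. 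Then
\[
a_h(v_h,v_h)\ge \frac{\beta/2}{C_R+\beta/2}\norm{\deltagh v_h}^2+\frac{\beta}{2}|v_h|_J^2 ,
\]
and $c_\beta=\min\{\beta/(2C_R+\beta),\beta/2\}>0$ for every $\beta>0$. This constant tends to zero as $\beta\downarrow0$, as the statement allows.

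\textbf{Main point to check.} The only genuine ingredient is that $C_R$ in Lemma~\ref{lem:lifting-stability} is uniform in $h$ on curved meshes. That uniformity rests on the scaled trace inequality \eqref{eq:scaled-trace} and an inverse estimate in the mapped space $Q_h$, both of which hold under Assumption~\ref{ass:computational-element-maps}; this is already asserted in the lemma. The argument above holds for all of $W_h$, and by Lemma~\ref{lem:kernel} $\enorm{\cdot}$ is a norm on $W_{h,\#}$, which gives well-posedness of \eqref{eq:discrete-problem} via Lax--Milgram.
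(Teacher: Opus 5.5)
Your proof is correct and follows the paper's argument: continuity by Cauchy--Schwarz plus the lifting stability of Lemma~\ref{lem:lifting-stability}, and coercivity by a weighted Young inequality on the completed square, with the negative lifting contribution absorbed by $\beta$. Your $\epsilon$ is the paper's $\alpha^{-1}$, and your explicit choice $\alpha=1+\beta/(2C_R)$ lies in the paper's admissible range $1<\alpha<1+\beta/C_R$; this yields the explicit constants $C_\beta=(1+C_R^{1/2})^2+\beta$ and $c_\beta=\min\{\beta/(2C_R+\beta),\beta/2\}$.
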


\begin{proof}
Continuity follows from Cauchy-Schwarz and Lemma~\ref{lem:lifting-stability}.  
For coercivity, for $\alpha>1$, applying Young's inequality we derive
\begin{equation*}
  \norm{\tdeltagh v_h}_{L^2(\Gammah)}^2
  \ge(1-\alpha^{-1})
  \norm{\deltagh v_h}_{L^2(\Gammah)}^2-(\alpha-1)C_R
  \sum_{E\in\Eh}h_E^{-1}\norm{J_E(v_h)}_{L^2(E)}^2.
\end{equation*}
Hence
\begin{equation*}
  a_h(v_h,v_h)
  \ge(1-\alpha^{-1})
  \norm{\deltagh v_h}_{L^2(\Gammah)}^2
  +\bigl(\beta-(\alpha-1)C_R\bigr)
  \sum_{E\in\Eh}h_E^{-1}\norm{J_E(v_h)}_{L^2(E)}^2.
\end{equation*}
Then choosing $1<\alpha<1+\beta/C_R$ concludes the proof of \eqref{eq:coercivity}.
\end{proof}

Lemmas~\ref{lem:kernel} and~\ref{lem:stability} and the Lax-Milgram theorem imply that \eqref{eq:discrete-problem} has a unique solution for every fixed $\beta>0$.
We emphasize that the coercivity holds for any fixed $\beta>0$, although the coercivity constant may degenerate as $\beta$ tends to $0$; in practice, there is no need to tune this parameter to above a threshold as in the IPDG method.      

%=======================================================================
\section{Error estimates}\label{sec:error-estimates}
%=======================================================================

\subsection{Exact broken identity and auxiliary estimates}\label{sec:auxiliary-est}

On the exact lifted mesh, define
\begin{equation}\label{def:pw-H2-space}
  H_h^2(\Gamma):=\{v\in C^0(\Gamma):v|_{K^l}\in H^2(K^l)
  \text{ for every }K^l\in\mathcal K_h^l\}.
\end{equation}
For $E^l=\partial(K^+)^l\cap\partial(K^-)^l$, set
\begin{equation*}
  J_{E^l}^{\,l}(w)
  :=\vnu_{E^l}^+\cdot\nablag w^+
   +\vnu_{E^l}^-\cdot\nablag w^-.
\end{equation*}

\begin{lemma}[Exact broken identity]\label{lem:exact-broken}
Let $u\in H^4(\Gamma)$ solve \eqref{eq:strong-problem}.  For every
$w\in H_h^2(\Gamma)$,
\begin{equation}\label{eq:exact-broken}
  (f,w)_\Gamma
  =\sum_{K^l\in\mathcal K_h^l}
    (\deltag u,\deltag w)_{K^l}
  -\sum_{E^l\in\mathcal E_h^l}
    (\deltag u,J_{E^l}^{\,l}(w))_{E^l}.
\end{equation}
\end{lemma}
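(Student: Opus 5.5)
The plan is to start from the strong equation $\deltag^2u=f$ and integrate by parts element by element on the exact lifted mesh $\mathcal K_h^l$. Since $u\in H^4(\Gamma)$, the function $\psi:=\deltag u$ lies in $H^2(\Gamma)$. Its trace on every edge $E^l$ is therefore single-valued, and $\nablag\psi$ has well-defined traces on both sides. We write
\[
  (f,w)_\Gamma=\sum_{K^l\in\mathcal K_h^l}(\deltag\psi,w)_{K^l}.
\]
This requires only $w\in L^2(\Gamma)$.

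First, on each $K^l$ we apply Green's formula for the Laplace--Beltrami operator on a surface patch with piecewise smooth boundary. Because $K^l\subset\Gamma$ is a curved element of the smooth surface itself, no curvature or normal terms appear. For smooth enough $\psi,w$ on $K^l$ this gives
\[
  (\deltag\psi,w)_{K^l}=-(\nablag\psi,\nablag w)_{K^l}+(\vnu_{K^l}\cdot\nablag\psi,w)_{\partial K^l}.
\]
Summed over elements, each interior edge appears twice with conormals $\vnu_{E^l}^\pm$ and $\vnu_{E^l}^++\vnu_{E^l}^-=0$. The function $\psi$ is globally $H^2$, so $\nablag\psi$ is single-valued on $E^l$, and $w$ is continuous. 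The edge contributions therefore cancel, and we obtain
\[
  (f,w)_\Gamma=-\sum_{K^l}(\nablag\psi,\nablag w)_{K^l}.
\]
Alternatively, this first step is just the global weak identity $(\deltag\psi,w)_\Gamma=-(\nablag\psi,\nablag w)_\Gamma$, valid since $w\in H^1(\Gamma)$ by continuity plus piecewise $H^2$.

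Second, we integrate by parts again elementwise, now moving the derivative onto $w$:
\[
  -(\nablag\psi,\nablag w)_{K^l}=(\psi,\deltag w)_{K^l}-(\psi,\vnu_{K^l}\cdot\nablag w)_{\partial K^l}.
\]
Here $\nablag w$ is genuinely two-valued across edges. Grouping the boundary terms edge by edge, and using that $\psi$ is single-valued, produces
\[
  -\sum_{E^l}(\psi,\vnu_{E^l}^+\cdot\nablag w^++\vnu_{E^l}^-\cdot\nablag w^-)_{E^l}
  =-\sum_{E^l}(\deltag u,J^{\,l}_{E^l}(w))_{E^l}.
\]
This is exactly \eqref{eq:exact-broken}. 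Since $\Gamma$ is closed, every edge is interior and there are no boundary edges to treat separately.

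The main obstacle is justifying the traces and Green's formulas at the available regularity. We need $w|_{K^l}\in H^2(K^l)$ to give $\nablag w$ an $L^2$ trace on $\partial K^l$, and $\psi\in H^2$ so that $\psi$ and $\nablag\psi$ have traces. We also need the Green formula on the curved, Lipschitz patches $K^l=p(K)$. We would handle this by pulling each patch back through the smooth charts $p\circ F_K^r$, which are bi-Lipschitz and smooth up to the boundary under Assumption~\ref{ass:computational-element-maps} and the regularity of $p$. We then use density of smooth functions in $H^2(K^l)$ and continuity of both sides of each identity with respect to the $H^2(K^l)\times H^2(\Gamma)$ topology.
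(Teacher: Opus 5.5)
Your proposal is correct and takes the same route as the paper: you integrate $\deltag^2u$ by parts twice on each lifted element $K^l$. The edge terms pairing the continuous $w$ with the single-valued conormal derivative of $\deltag u$ cancel because $\vnu_{E^l}^++\vnu_{E^l}^-=0$, and the two-sided conormal derivatives of $w$ assemble into $J_{E^l}^{\,l}(w)$. Your remarks on traces, the curvature-free Green formula on $K^l\subset\Gamma$, and density on the curved patches fill in details that the paper's one-line proof leaves implicit.
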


\begin{proof}
This is obtained directly by integrating $\deltag^2u$ by parts twice on each $K^l$.  The edge terms containing the continuous function $w$ and the conormal derivative of $\deltag u$ cancel, whereas the conormal derivatives of the piecewise-$H^2$ function $w$ enter into $J_{E^l}^{\,l}(w)$.
\end{proof}

Let $I_hu^e\in W_h$ denote the degree-$k$ nodal Lagrange interpolant and define its mean-corrected version by
\begin{equation}\label{eq:mean-corrected-interpolant}
  I_h^\#u^e
  :=I_hu^e-\frac{(I_hu^e,1)_{\Gammah}}{|\Gammah|}
  \in W_{h,\#}.
\end{equation}

We next prove some useful estimates, which extend the lowest order case $r=1$ and $k=2$ described in \cite[Lemmas~3.3, 5.2, 5.3, 5.7, C.1]{larsson2017continuous} to the general situation. 

\begin{lemma}[Interpolation estimates and discrete Poincar\'e inequality]
\label{lem:known-approximation}
For $u\in H^{k+1}(\Gamma)$ and $w_h\in W_h$,
\begin{equation}\label{eq:interpolation-energy}
  \enorm{u^e-I_h^\#u^e}
  \le Ch^{k-1}\norm{u}_{H^{k+1}(\Gamma)},
\end{equation}
and
\begin{equation}\label{eq:lower-order-control}
  \norm{w_h^l}_{L^2(\Gamma)/\R}
  +\norm{\nablag w_h^l}_{L^2(\Gamma)}
  \le C\enorm{w_h}.
\end{equation}
\end{lemma}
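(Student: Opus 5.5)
The lemma has two parts, and I will prove them separately.

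\emph{Interpolation estimate \eqref{eq:interpolation-energy}.} The mean correction is a constant. Constants are annihilated by $\deltagh$, and since $\nablagh$ of a constant vanishes they are also annihilated by $J_E$. Hence $\enorm{u^e-I_h^\#u^e}=\enorm{u^e-I_hu^e}$, and it suffices to bound $\eta:=u^e-I_hu^e$. On each $K$ I pull back to $\widehat T$ through $F_K^r$. Write $\widehat u_K:=u^e\circ F_K^r$; the interpolant satisfies $\widehat{I_hu^e}=\widehat I\,\widehat u_K$. Bramble-Hilbert on $\mathcal P_k(\widehat T)$ gives $\norm{\widehat u_K-\widehat I\widehat u_K}_{H^2(\widehat T)}\le C|\widehat u_K|_{H^{k+1}(\widehat T)}$.

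The chain rule (Faà di Bruno) together with \eqref{eq:parametric-shape-regularity} and \eqref{eq:curved-map-derivative-bounds} bounds $|\widehat u_K|_{H^{k+1}(\widehat T)}$ by $Ch_K^{k+1}|\det DF_K^r|^{-1/2}\sum_{j\le k+1}\norm{D^j_{\Gamma_h}u^e}_{L^2(K)}$. Derivatives of $F_K^r$ of order above $r$ vanish, and every derivative of order $m\le r$ carries a factor $h^m$, so each term has total weight $h^{k+1}$.

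Mapping back uses the metric tensor and its inverse, which are controlled by \eqref{eq:parametric-shape-regularity}. This yields
\[
\norm{\deltagh\eta}_{L^2(K)}+h_K^{-1}\norm{\nablagh\eta}_{L^2(K)}+h_K^{-2}\norm{\eta}_{L^2(K)}\le Ch_K^{k-1}\norm{u^e}_{H^{k+1}(K)}.
\]
Here $\deltagh$ contains first derivatives of the metric, which is harmless because those derivatives are again bounded by powers of $h$.

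For the jump term, continuity of $\eta$ lets me use $J_E(\eta)=\sum_\pm\vnu_E^\pm\cdot\nablagh\eta^\pm$. Then $h_E^{-1}\norm{J_E(\eta)}_{L^2(E)}^2\le Ch_E^{-1}\sum_\pm\norm{\nablagh\eta^\pm}^2_{L^2(E)}$. The scaled trace inequality \eqref{eq:scaled-trace} bounds this by $C\sum_\pm\bigl(h^{-2}\norm{\nablagh\eta}^2_{L^2(K^\pm)}+|\eta|_{H^2(K^\pm)}^2\bigr)\le Ch^{2k-2}\norm{u^e}^2_{H^{k+1}(K^\pm)}$. I sum over elements and apply the broken norm equivalence \eqref{eq:higher-order-norm-equivalence} with $m=k+1\le s_k$. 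Note that I do not need the geometric jump estimate of Lemma~\ref{lem:conormal-geometry} here, because I am bounding the jump of $\eta$ itself rather than splitting it.

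\emph{Poincaré-type bound \eqref{eq:lower-order-control}.} I repeat the integration-by-parts identity from the proof of Lemma~\ref{lem:kernel}:
\[
\norm{\nablagh w_h}^2_{L^2(\Gammah)}=-(\deltagh w_h,w_h-c)_{\Gammah}+\sum_E(J_E(w_h),w_h-c)_E,
\]
valid for any constant $c$ because $\sum_K(\deltagh w_h,1)_K=\sum_E(J_E(w_h),1)_E$. I choose $c$ to be the mean of $w_h$ on $\Gammah$.

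The volume term is handled by Cauchy-Schwarz and the Poincaré inequality on $\Gammah$. I obtain that inequality by lifting: the norm equivalences \eqref{eq:extension-norm-equivalence}, the Poincaré inequality on $\Gamma$, and the fact that the mean defect is controlled via \eqref{eq:mu-errors}.

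For the edge term, Cauchy-Schwarz gives $\bigl(\sum_E h_E^{-1}\norm{J_E}^2\bigr)^{1/2}\bigl(\sum_E h_E\norm{w_h-c}_{L^2(E)}^2\bigr)^{1/2}$. The trace inequality bounds the second factor by $C\bigl(\norm{w_h-c}^2+h^2\norm{\nablagh w_h}^2\bigr)$, which by Poincaré is at most $C\norm{\nablagh w_h}^2$. Dividing through gives $\norm{\nablagh w_h}\le C\enorm{w_h}$. Lifting with \eqref{eq:extension-norm-equivalence} and applying Poincaré on $\Gamma$ then controls $\norm{w_h^l}_{L^2(\Gamma)/\R}$.

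\emph{Main obstacle.} I expect the hardest step to be the curved-element interpolation bound for $\deltagh\eta$, specifically getting uniform constants when $F_K^r$ is nonaffine. I would handle it through the explicit metric-tensor formula for $\deltagh$ in reference coordinates, combined with \eqref{eq:curved-map-derivative-bounds}, as in the standard isoparametric theory.
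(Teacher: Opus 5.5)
Your proposal is correct and follows the paper's proof step for step. You drop the constant mean correction, pull back by $F_K^r$ for a Bramble--Hilbert/scaling estimate, bound the jump of $u^e-I_hu^e$ side by side through the scaled trace inequality, and for \eqref{eq:lower-order-control} combine the integration-by-parts identity with $c=c_h$, a Poincar\'e inequality on $\Gammah$ obtained by lifting, and the weighted trace/Cauchy--Schwarz argument. One minor simplification: you need not control the mean defect through \eqref{eq:mu-errors} with an absorption step, because $c_h$ minimizes $\norm{w_h-c}_{L^2(\Gammah)}$ over constants, so it is bounded by the value at the mean of $w_h^l$ on $\Gamma$.
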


\begin{proof}
We define $e_h:=u^e-I_hu^e$. 
Since $I_h^\#u^e-I_hu^e$ is constant, it does not contribute to either $\deltagh e_h$ or $J_E(e_h)$.  
By the standard interpolation theory for regular families of curved parametric elements
\cite{ciarlet1972interpolation,antonietti2014high,demlow2009higher}, together with the Sobolev norm equivalences \eqref{eq:extension-norm-equivalence}-\eqref{eq:broken-full-norm-equivalence}, we have, for $j=1,2$,
\begin{equation}\label{eq:mapped-interpolation-j12}
  \sum_{K\in\Kh}|e_h|_{H^j(K)}^2
  \le
  Ch^{2(k+1-j)}
  \norm{u}_{H^{k+1}(\Gamma)}^2.
\end{equation}
Indeed, after pullback by $F_K^r$, this is the usual
Bramble-Hilbert estimate on the reference triangle, followed by the
uniform scaling estimates for $F_K^r$ and summation over the mesh.

In particular, $\norm{\deltagh e_h}_{L^2(\Gammah)}\le Ch^{k-1}\norm{u}_{H^{k+1}(\Gamma)}$.  
Moreover, by the scaled trace inequality \eqref{eq:scaled-trace} we derive 
\begin{align*}
  \sum_{E\in\Eh}h_E^{-1}\norm{J_E(e_h)}_{L^2(E)}^2
  &\le
  C\sum_{K\in\Kh}h_K^{-1}
    \norm{\nablagh e_h}_{L^2(\partial K)}^2
  \\
  &\le
  C\sum_{K\in\Kh}
  \left(
    h_K^{-2}\norm{\nablagh e_h}_{L^2(K)}^2
    +\norm{D_{\Gammah}^2e_h}_{L^2(K)}^2
  \right)
  \\
  &\le
  Ch^{2(k-1)}
  \norm{u}_{H^{k+1}(\Gamma)}^2.
\end{align*}
This proves \eqref{eq:interpolation-energy}.

To prove \eqref{eq:lower-order-control}, let $c_h:=|\Gammah|^{-1}(w_h,1)_{\Gammah}$. 
Applying the standard Poincar\'e-Friedrichs inequality and the norm equivalences, we have
\begin{equation}\label{eq:uniform-poincare-gammah}
  \norm{w_h-c_h}_{L^2(\Gammah)}
  \le C\norm{\nablagh w_h}_{L^2(\Gammah)}.
\end{equation}
Elementwise integration by parts and the continuity of $w_h$ yield
\begin{equation}\label{eq:grad-by-parts}
  \norm{\nablagh w_h}_{L^2(\Gammah)}^2
  =-(\deltagh w_h,w_h-c_h)_{\Gammah}+\sum_{E\in\Eh}(J_E(w_h),w_h-c_h)_E.
\end{equation}
The scaled trace inequality \eqref{eq:scaled-trace} and
\eqref{eq:uniform-poincare-gammah} imply
\begin{equation*}
  \sum_{E\in\Eh}h_E
  \norm{w_h-c_h}_{L^2(E)}^2
  \le
  C\left(
    \norm{w_h-c_h}_{L^2(\Gammah)}^2
    +h^2\norm{\nablagh w_h}_{L^2(\Gammah)}^2
  \right)
  \le
  C\norm{\nablagh w_h}_{L^2(\Gammah)}^2.
\end{equation*}
Consequently, combining it with \eqref{eq:uniform-poincare-gammah} and \eqref{eq:grad-by-parts} and using the weighted Cauchy-Schwarz inequality, we obtain   
\begin{equation*}
  \norm{\nablagh w_h}_{L^2(\Gammah)}^2
  \le
  C\enorm{w_h}
  \norm{\nablagh w_h}_{L^2(\Gammah)}.
\end{equation*}
Then the estimate \eqref{eq:lower-order-control} follows from \eqref{eq:uniform-poincare-gammah} and the norm equivalences.
\end{proof}

We then show some geometric approximation errors related to operators. 
\begin{lemma}[Geometric operator comparisons]
\label{lem:known-operator-comparisons}
Let $z\in H^2(\Gamma)$ and $w_h\in W_h$.  Then
\begin{align}
\norm{\deltagh z^e-(\deltag z)^e}_{L^2(\Gammah)}&\le Ch^r\norm{z}_{H^2(\Gamma)},
  \label{eq:laplacian-comparison-smooth}
  \\
  \norm{\deltagh w_h-(\deltag w_h^l)^e}_{L^2(\Gammah)}
  &\le Ch^r\enorm{w_h},
  \label{eq:laplacian-comparison-discrete}
  \\
  \left(
    \sum_{E\in\Eh}h_E^{-1}
    \norm{(J_{E^l}^{\,l}(w_h^l))^e-J_E(w_h)}_{L^2(E)}^2
  \right)^{1/2}
  &\le Ch^r\enorm{w_h},
  \label{eq:jump-geometry-comparison}
\end{align}
If, in addition, $z\in H^3(\Gamma)$, then
\begin{equation}
  \left(
    \sum_{E\in\Eh}h_E
    \norm{(\deltag z)^e-
      \mean{\deltagh z^e}}_{L^2(E)}^2
  \right)^{1/2}
  \le Ch^r\norm{z}_{H^3(\Gamma)}.
  \label{eq:edge-laplacian-comparison}
\end{equation}
\end{lemma}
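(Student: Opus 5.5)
The plan is to write each Laplacian in divergence form, pull it back to $\Gamma$ through the closest-point map, and reduce every comparison to a geometric defect matrix that is $\mathcal O(h^r)$ by Assumption~\ref{lem:parametric-geometry}.

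\textbf{Geometric defect matrix.} Pulling back through $p$, the gradient relations \eqref{eq:gradient-transform} give, for $z$ on $\Gamma$,
\[
(\nablag z,\nablag\phi)_\Gamma=(A_h\nablagh z^e,\nablagh\phi^e)_{\Gammah},
\qquad
A_h:=\mu_hB_h^{-1}B_h^{-T},
\]
and $\norm{P_h-A_h}_{L^\infty}\le Ch^{r+1}$ by \eqref{eq:mu-errors} and \eqref{eq:PhBh-est}. Since $A_h$ involves $D^2F_K^r$, its tangential derivatives are only $\mathcal O(h^r)$. Concretely, the derivative of $P_h-A_h$ contains derivatives of $\vn_h-\vn^e$ and of $d$, which are controlled by \eqref{eq:scaled-distance-defect} with $m=1,2$ and the curved-map bounds \eqref{eq:curved-map-derivative-bounds}. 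Hence elementwise
\[
\norm{\nablagh(P_h-A_h)}_{L^\infty(K)}\le Ch^r .
\]
I expect establishing this derivative bound on each element to be the main obstacle; it is the only place where the parametric defect estimates at $m=2$ enter.

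\textbf{Estimates \eqref{eq:laplacian-comparison-smooth} and \eqref{eq:laplacian-comparison-discrete}.} Elementwise one has $(\deltag z)^e=\mu_h^{-1}\di_{\Gammah}(A_h\nablagh z^e)$ as a strong identity, because the lifted divergence theorem holds on each $K$. Subtracting $\deltagh z^e=\di_{\Gammah}(P_h\nablagh z^e)$ leaves three contributions:
\begin{itemize}
\item $(1-\mu_h^{-1})\deltagh z^e$, which is $\mathcal O(h^{r+1})$ times $\norm{z}_{H^2}$;
\item $\di_{\Gammah}\bigl((A_h-P_h)\nablagh z^e\bigr)$, which splits into $(A_h-P_h):D^2_{\Gammah}z^e=\mathcal O(h^{r+1})$ plus $\nablagh(A_h-P_h)\cdot\nablagh z^e=\mathcal O(h^r)$;
\item the lower-order term with the same bound.
\end{itemize}
Summing over elements and applying the norm equivalences of Lemma~\ref{lem:norm-trace} gives \eqref{eq:laplacian-comparison-smooth}.

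The same computation applied to $w_h$ yields \eqref{eq:laplacian-comparison-discrete}, provided we bound $|w_h|_{H^2(\Kh)}+\norm{\nablagh w_h}$ by $C\enorm{w_h}$. The gradient part comes from \eqref{eq:lower-order-control}. The broken Hessian part comes from a broken elliptic estimate: the Bochner-type identity on each $K$ controls $D^2_{\Gammah}w_h$ by $\deltagh w_h$, the conormal jumps, and $\nablagh w_h$. As a fallback I would use the $C^0$-IP argument of \cite{larsson2017continuous}: the enriching operator onto a $C^1$ space, lifted to $\Gamma$, together with \eqref{eq:higher-order-norm-equivalence}.

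\textbf{Jump comparison \eqref{eq:jump-geometry-comparison}.} Write $(\nablag w_h^l)^e=B_h^{-T}\nablagh w_h$ and $(\vnu_{E^l}^\pm)^e=P^e\vnu_E^\pm+\mathcal O(h^{r+1})$ by \eqref{eq:individual-conormal-comparison}. On each side this gives
\[
(\vnu_{E^l}^\pm)^e\cdot(\nablag w_h^l)^e-\vnu_E^\pm\cdot\nablagh w_h
=\vnu_E^\pm\cdot\bigl(P^eB_h^{-T}-P_h\bigr)\nablagh w_h+\mathcal O(h^{r+1})|\nablagh w_h| .
\]
Using $B_h=P^e(I-d\mathcal W)P_h$, $\norm{d}_\infty\le Ch^{r+1}$ and \eqref{eq:PhBh-est}, the matrix $P_hB_h^{-1}P^e-P_h$ is $\mathcal O(h^r)$; the normal mismatch $\vn^e-\vn_h$ contributes the order-$h^r$ term. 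Thus each one-sided term is bounded pointwise by $Ch^r|\nablagh w_h^\pm|$. Weighting by $h_E^{-1}$, applying the scaled trace inequality \eqref{eq:scaled-trace} and an inverse estimate on the mapped polynomials, the sum is at most $Ch^{2r}\bigl(h^{-2}\norm{\nablagh w_h}^2\bigr)$. This is too weak, so I would not use the inverse estimate. Instead I would apply \eqref{eq:scaled-trace} to the gradient directly, which gives
\[
h^{2r}\bigl(h^{-2}\norm{\nablagh w_h}^2+|w_h|_{H^2(\Kh)}^2\bigr).
\]
This is still off by $h^{-2}$ in the first term. The fix is to keep the first-order defect in tangential form: $(\nablagh w_h)\cdot\vnu_E^\pm$ is the only relevant component, and $P^e\vnu_E^\pm-\vnu_E^\pm$ is normal to $\Gamma$. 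Hence the defect acts only through $\vn^e\cdot\nablagh w_h=(\vn^e-\vn_h)\cdot\nablagh w_h$, which is $\mathcal O(h^r)$. I would then accept the bound
\[
\sum_E h_E^{-1}h^{2r}\norm{\nablagh w_h}^2_{L^2(E)}\le Ch^{2r}\bigl(h^{-2}\norm{\nablagh w_h}^2+|w_h|^2_{H^2}\bigr),
\]
and recover the missing power using the sharper $h^{r+1}$ size of the tangential parts. The normal-part factor $\mathcal O(h^r)$ appears twice (once from $\vnu_E$ and once from $\nablagh w_h$ relative to $\vn^e$), so the product is $\mathcal O(h^{r+1})$, which yields $Ch^r\enorm{w_h}$ after the trace inequality.

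\textbf{Edge Laplacian \eqref{eq:edge-laplacian-comparison}.} Note that $(\deltag z)^e-\mean{\deltagh z^e}=\mean{(\deltag z)^e-\deltagh z^e}$, because $(\deltag z)^e$ is single-valued on $E$. Apply the scaled trace inequality to $g:=(\deltag z)^e-\deltagh z^e$ on each $K$:
\[
\sum_E h_E\norm{g}_{L^2(E)}^2\le C\bigl(\norm{g}_{L^2}^2+h^2|g|_{H^1(\Kh)}^2\bigr).
\]
The first term is $\le Ch^{2r}\norm{z}_{H^2}^2$ by \eqref{eq:laplacian-comparison-smooth}. For the second, $|g|_{H^1(\Kh)}\le C(h^{r-1}+1)\norm{z}_{H^3}$, since the derivative of $\nablagh(A_h-P_h)$ is at most $\mathcal O(h^{r-1})$ by \eqref{eq:curved-map-derivative-bounds}. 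Hence $h^2|g|^2_{H^1(\Kh)}\le Ch^{2r}\norm{z}_{H^3}^2$ whenever $r\ge1$, which proves \eqref{eq:edge-laplacian-comparison}.
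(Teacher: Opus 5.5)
Your identity $(\deltag z)^e=\mu_h^{-1}\di_{\Gammah}(A_h\nablagh z^e)$ with $A_h=\mu_hB_h^{-1}B_h^{-T}$ is correct. It gives a coordinate-free counterpart of Lemma~\ref{lem:local-laplacian-perturbation}, which the paper simply invokes for \eqref{eq:laplacian-comparison-smooth}, \eqref{eq:laplacian-comparison-discrete} and \eqref{eq:edge-laplacian-comparison}. However, the step you call the main obstacle is left open, and your sketch points at the wrong mechanism. Tangential derivatives of $\vn_h-\vn^e$ are only $\mathcal O(h^{r-1})$; for $r=1$, $\vn_h$ is elementwise constant while $\nablagh\vn^e=\mathcal O(1)$. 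So any term linear in them would destroy $\norm{\nablagh(P_h-A_h)}_{L^\infty(K)}\le Ch^r$. The bound holds because the normal mismatch enters only quadratically: using $P^e\mathcal W=\mathcal W P^e=\mathcal W$,
\[
  P_h-B_h^TB_h=\nablagh d\otimes\nablagh d+2d\,P_h\mathcal W P_h-d^2P_h\mathcal W^2P_h .
\]
Then \eqref{eq:scaled-distance-defect} with $m=1,2$ gives derivatives of order $h^{2r-1}+h^r$, and likewise for $\mu_h$. This is the tangential form of the paper's metric-defect identity \eqref{eq:metric-defect-identity}.

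Second, your last step for \eqref{eq:edge-laplacian-comparison} fails as written. From $|g|_{H^1(\Kh)}\le C(h^{r-1}+1)\norm{z}_{H^3(\Gamma)}$ you only get $h^2|g|_{H^1(\Kh)}^2\le C(h^{2r}+h^2)\norm{z}_{H^3(\Gamma)}^2$, which is not $\mathcal O(h^{2r})$ when $r\ge2$. Removing the ``$+1$'' would require showing that every term of $\nablagh g$ carries a geometric defect factor, i.e.\ second-derivative bounds on $P_h-A_h$ and $\mu_h$, which you have not established. The paper never differentiates the defect twice. It takes the pointwise bound $|g|\le C(h_K^r|(\nablag z)^e|+h_K^{r+1}|(D_\Gamma^2z)^e|)$ on the edges (your identity yields this bound as well). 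It then applies the scaled trace inequality to $(\nablag z)^e$ and $(D_\Gamma^2 z)^e$ rather than to $g$; this is where $z\in H^3(\Gamma)$ is used.

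For \eqref{eq:laplacian-comparison-discrete} you do not need $|w_h|_{H^2(\Kh)}\le C\enorm{w_h}$, and neither of your routes to it (elementwise Bochner, or an enriching operator on a curved $C^0$ surface) is carried out. The broken Hessian appears with the factor $h^{r+1}$. So the inverse estimate $h_K\norm{D_{\Gammah}^2w_h}_{L^2(K)}\le C\norm{\nablagh w_h}_{L^2(K)}$ reduces everything to $Ch^r\norm{\nablagh w_h}_{L^2(\Gammah)}$, which \eqref{eq:lower-order-control} bounds by $Ch^r\enorm{w_h}$.

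For \eqref{eq:jump-geometry-comparison}, the matrix bound $P_hB_h^{-1}P^e-P_h=\mathcal O(h^r)$ is the wrong tool, and your paragraph asserts bounds and then retracts them. Your repair does work once written out. By \eqref{eq:PhBh-est} and \eqref{eq:dist-vel}, $B_h^{-T}g=P^eg+\mathcal O(h^{r+1})|g|$ for $g$ tangent to $\Gammah$. Moreover $\vnu_E^\pm\cdot(P^e-P_h)g=-(\vn^e\cdot\vnu_E^\pm)(\vn^e\cdot g)=\mathcal O(h^{2r})|g|$.

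The paper's version is cleaner. Since $\vnu_E^\sigma$ is tangent to $K^\sigma$, $B_h\vnu_E^\sigma=P^e\vnu_E^\sigma-d\,\mathcal W\vnu_E^\sigma$. Hence $B_h^{-1}(\vnu_{E^l}^{\sigma})^e-\vnu_E^\sigma=B_h^{-1}\bigl((\vnu_{E^l}^{\sigma})^e-P^e\vnu_E^\sigma+d\,\mathcal W\vnu_E^\sigma\bigr)=\mathcal O(h^{r+1})$ by \eqref{eq:individual-conormal-comparison} and \eqref{eq:dist-vel}. The $h_E^{-1}$ weight, together with the trace and inverse estimates, then costs exactly this one extra power of $h$.
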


\begin{proof}
Applying Lemma~\ref{lem:local-laplacian-perturbation}, summing over the mesh, and using the broken Sobolev norm equivalences in Lemma \ref{lem:norm-trace} lead to 
\[
  \norm{\deltagh z^e-(\deltag z)^e}_{L^2(\Gammah)}
  \le
  C\left(
    h^r\norm{z}_{H^1(\Gamma)}
    +h^{r+1}\norm{z}_{H^2(\Gamma)}
  \right),
\]
which proves \eqref{eq:laplacian-comparison-smooth}.
Taking $v=w_h$ in Lemma \ref{lem:local-laplacian-perturbation} and using an inverse estimate
\[
  \norm{D_{\Gammah}^2w_h}_{L^2(K)}
  \le Ch_K^{-1}\norm{\nablagh w_h}_{L^2(K)}
\]
gives 
\[
  \norm{\deltagh w_h-(\deltag w_h^l)^e}_{L^2(\Gammah)}
  \le
  Ch^r\norm{\nablagh w_h}_{L^2(\Gammah)}.
\]
Estimate \eqref{eq:laplacian-comparison-discrete} now follows from
\eqref{eq:lower-order-control} and the norm equivalences.

For the jump comparison, recalling the tangential gradient relation $(\nablag w_h^l)^e=B_h^{-T}\nablagh w_h$, we have
\begin{equation}\label{eq:jump-transformation}
  (J_{E^l}^{\,l}(w_h^l))^e-J_E(w_h)
  =
  \sum_{\sigma\in\{+,-\}}
  \left(
    B_h^{-1}(\vnu_{E^l}^{\,\sigma})^e-\vnu_E^\sigma
  \right)
  \cdot\nablagh w_h^\sigma.
\end{equation}
Using the definition \eqref{eq:B-map} of $B_h$ and the fact that $\vnu_E^\sigma$ is tangential to $\Gamma_h$, we derive 
\[
  B_h\vnu_E^\sigma
  =
  P^e(I-d\mathcal W)\vnu_E^\sigma.
\]
By the geometric estimates \eqref{eq:individual-conormal-comparison} and \eqref{eq:dist-vel}, we have  
\[
  \norm{(\vnu_{E^l}^{\,\sigma})^e
        -P^e\vnu_E^\sigma}_{L^\infty(E)}
  \le Ch^{r+1},
  \qquad
  \norm{d}_{L^\infty(E)}\le Ch^{r+1},
\]
and hence the uniform boundedness of $B_h^{-1}$ \eqref{eq:Bh-bounded} implies
\begin{equation}\label{eq:inverse-conormal-comparison}
  \norm{
    B_h^{-1}(\vnu_{E^l}^{\,\sigma})^e-\vnu_E^\sigma
  }_{L^\infty(E)}
  \le C\norm{
    (\vnu_{E^l}^{\,\sigma})^e- B_h\vnu_E^\sigma
  }_{L^\infty(E)}
  \le Ch^{r+1}.
\end{equation}
Combining \eqref{eq:jump-transformation}, \eqref{eq:inverse-conormal-comparison}, and the scaled trace inequality \eqref{eq:scaled-trace} along with the inverse estimates give 
\begin{align*}
  \sum_{E\in\Eh}h_E^{-1}
  \norm{(J_{E^l}^{\,l}(w_h^l))^e-J_E(w_h)}_{L^2(E)}^2
  &\le
  C\sum_{K\in\Kh}
    h_K^{2r+1}
    \norm{\nablagh w_h}_{L^2(\partial K)}^2
  \\
  &\le
  Ch^{2r}\norm{\nablagh w_h}_{L^2(\Gammah)}^2
  \le
  Ch^{2r}\enorm{w_h}^2.
\end{align*}
This proves \eqref{eq:jump-geometry-comparison}.

Moreover, applying Lemma~\ref{lem:local-laplacian-perturbation}, we derive
\[
  |\deltagh z^e-(\deltag z)^e|
  \le
  C\left(
    h_K^r|(\nablag z)^e|
    +h_K^{r+1}|(D_\Gamma^2z)^e|
  \right).
\]
Since $(\deltag z)^e-\mean{\deltagh z^e}=-\mean{\deltagh z^e-(\deltag z)^e}$, the scaled trace inequality \eqref{eq:scaled-trace} yields
\begin{align*}
  \sum_{E\in\Eh}h_E
  \norm{(\deltag z)^e-\mean{\deltagh z^e}}_{L^2(E)}^2
  &\le
  C\sum_{K\in\Kh}
  \left(
    h_K^{2r+1}
      \norm{(\nablag z)^e}_{L^2(\partial K)}^2
    +
    h_K^{2r+3}
      \norm{(D_\Gamma^2z)^e}_{L^2(\partial K)}^2
  \right)
  \\
  &\le
  Ch^{2r}\norm{z}_{H^3(\Gamma)}^2,
\end{align*}
which concludes the proof of \eqref{eq:edge-laplacian-comparison}.
\end{proof}

\begin{lemma}[Geometric load consistency]
\label{lem:known-load-consistency}
For $f\in L^2(\Gamma)$ with $(f,1)_\Gamma=0$ and $w_h\in W_h$,
\begin{equation}\label{eq:load-geometric-consistency}
  \left|(f,w_h^l)_\Gamma-\ell_h(w_h)\right|
  \le Ch^{r+1}\norm{f}_{L^2(\Gamma)}\enorm{w_h}.
\end{equation}
\end{lemma}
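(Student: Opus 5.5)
The plan is to rewrite the continuous load term on $\Gammah$ via the measure quotient, isolate the mean-correction contribution, and then reduce everything to a weighted $L^2$ bound on a mean-zero representative of $w_h$. By \eqref{eq:measure-quotients},
\[
  (f,w_h^l)_\Gamma=(f^e\mu_h,w_h)_{\Gammah},
\]
while $\ell_h(w_h)=(f^e,w_h)_{\Gammah}-\bar f_h(w_h,1)_{\Gammah}$ with $\bar f_h:=(f^e,1)_{\Gammah}/|\Gammah|$. Hence
\[
  (f,w_h^l)_\Gamma-\ell_h(w_h)
  =(f^e(\mu_h-1),w_h)_{\Gammah}+\bar f_h\,(w_h,1)_{\Gammah}.
\]

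The first observation is that the difference is invariant under adding constants to $w_h$. The left side changes by $c\bigl[(f,1)_\Gamma-(f_h,1)_{\Gammah}\bigr]=0$, since $(f,1)_\Gamma=0$ by \eqref{eq:compatibility} and $(f_h,1)_{\Gammah}=0$ by construction in \eqref{eq:corrected-load}. So I may replace $w_h$ by $\tilde w_h:=w_h-c$, choosing $c$ so that $(\tilde w_h^l,1)_\Gamma=0$. This matters because the right side only contains $\enorm{w_h}$, which does not see constants.

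Next I bound the two terms separately.
\begin{itemize}
\item \emph{Quadrature term.} By \eqref{eq:mu-errors}, norm equivalence, and the Poincar\'e inequality on $\Gamma$ applied to the mean-zero $\tilde w_h^l$,
\[
  |(f^e(\mu_h-1),\tilde w_h)_{\Gammah}|
  \le Ch^{r+1}\norm{f}_{L^2(\Gamma)}\norm{\tilde w_h^l}_{L^2(\Gamma)}
  \le Ch^{r+1}\norm{f}_{L^2(\Gamma)}\norm{\nablag w_h^l}_{L^2(\Gamma)},
\]
and \eqref{eq:lower-order-control} converts the last factor into $C\enorm{w_h}$.
\item \emph{Mean-correction term.} I use
\[
  (f^e,1)_{\Gammah}=(f^e,\mu_h^{-1}-1)_{\Gamma}+(f,1)_\Gamma=(f^e,\mu_h^{-1}-1)_{\Gamma},
\]
so $|\bar f_h|\le Ch^{r+1}\norm{f}_{L^2(\Gamma)}$, with $|\Gammah|$ bounded below uniformly. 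Similarly,
\[
  (\tilde w_h,1)_{\Gammah}=(\tilde w_h^l,\mu_h^{-1}-1)_\Gamma+(\tilde w_h^l,1)_\Gamma=(\tilde w_h^l,\mu_h^{-1}-1)_\Gamma,
\]
which is bounded by $Ch^{r+1}\norm{\tilde w_h^l}_{L^2}\le C\enorm{w_h}$. The product is therefore even $O(h^{2r+2})$.
\end{itemize}

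The main obstacle is only the constant-invariance step: without it, $\norm{w_h}_{L^2}$ cannot be controlled by $\enorm{w_h}$. The remaining care is to check that the constant used to mean-correct $w_h$ lives in $W_h$ and leaves both the jumps and $\deltagh$ unchanged.
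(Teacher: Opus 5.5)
Your proof is correct and is essentially the paper's argument: both use invariance of the difference under adding constants (from $(f,1)_\Gamma=(f_h,1)_{\Gamma_h}=0$), the change of measure that produces the factor $\mu_h-1$, and \eqref{eq:lower-order-control} to control the $L^2$ norm of a mean-free representative of $w_h$. The only difference is the normalizing constant: the paper subtracts the discrete mean $c_h=|\Gamma_h|^{-1}(w_h,1)_{\Gamma_h}$, which makes your mean-correction term $\bar f_h(w_h-c_h,1)_{\Gamma_h}$ vanish identically, so your separate (correct) $O(h^{2r+2})$ estimate of that term is not needed.
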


\begin{proof}
Let $c_h:=|\Gammah|^{-1}(w_h,1)_{\Gammah}$. 
Since both $f$ and $f_h$ have zero mean,
\[
  (f,w_h^l)_\Gamma=(f,w_h^l-c_h)_\Gamma,
  \qquad
  \ell_h(w_h)=\ell_h(w_h-c_h).
\]
Moreover, the definition \eqref{eq:corrected-load} of $f_h$ and $(w_h-c_h,1)_{\Gammah}=0$ imply
\[
  \ell_h(w_h-c_h)
  =(f^e,w_h-c_h)_{\Gammah}.
\]
Changing variables from $\Gamma$ to $\Gammah$ therefore gives
\[
  (f,w_h^l)_\Gamma-\ell_h(w_h)
  =
  ((\mu_h-1)f^e,w_h-c_h)_{\Gammah}.
\]
Consequently, by \eqref{eq:mu-errors}, the norm equivalences, and \eqref{eq:lower-order-control},
\begin{align*}
  \left|(f,w_h^l)_\Gamma-\ell_h(w_h)\right|
  &\le
  \norm{\mu_h-1}_{L^\infty(\Gammah)}
  \norm{f^e}_{L^2(\Gammah)}
  \norm{w_h-c_h}_{L^2(\Gammah)}
  \\
  &\le
  Ch^{r+1}\norm{f}_{L^2(\Gamma)}\enorm{w_h}.
\end{align*}
\end{proof}

\subsection{Projection and consistency}\label{sec:proj-consistency}
We first prove an estimate related to the projection onto $Q_h$. 

\begin{lemma}[Projection residual]
\label{lem:projection-residual}
For $z\in H^{k+1}(\Gamma)$ and $w_h\in W_h$,
\begin{equation}\label{eq:projection-consistency}
  \left|
    \sum_{E\in\Eh}
    (\mean{(I-\Pi_{Q_h})(\deltag z)^e},J_E(w_h))_E
  \right|
  \le Ch^{k-1}\norm{z}_{H^{k+1}(\Gamma)}\enorm{w_h}.
\end{equation}
\end{lemma}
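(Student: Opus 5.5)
We bound the sum edgewise by the weighted Cauchy--Schwarz inequality. Set $g:=(\deltag z)^e$ and $\eta:=(I-\Pi_{Q_h})g$. Then
\[
  \Bigl|\sum_{E\in\Eh}(\mean{\eta},J_E(w_h))_E\Bigr|
  \le
  \Bigl(\sum_{E\in\Eh}h_E\norm{\mean{\eta}}_{L^2(E)}^2\Bigr)^{1/2}
  \Bigl(\sum_{E\in\Eh}h_E^{-1}\norm{J_E(w_h)}_{L^2(E)}^2\Bigr)^{1/2}.
\]
The second factor is at most $\enorm{w_h}$ by \eqref{eq:energy-seminorm}. It therefore remains to show
\[
  \sum_{E\in\Eh}h_E\norm{\mean{\eta}}_{L^2(E)}^2\le Ch^{2(k-1)}\norm{z}_{H^{k+1}(\Gamma)}^2 .
\]

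We bound each edge term by the traces from both adjacent elements. By the scaled trace inequality \eqref{eq:scaled-trace} on each $K$,
\[
  h_K\norm{\eta}_{L^2(\partial K)}^2\le C\bigl(\norm{\eta}_{L^2(K)}^2+h_K^2\norm{\nablagh\eta}_{L^2(K)}^2\bigr).
\]
The local estimate we need is then
\[
  \norm{\eta}_{L^2(K)}+h_K\norm{\nablagh\eta}_{L^2(K)}\le Ch_K^{k-1}\norm{g}_{H^{k-1}(K)} .
\]
Here $\norm{g}_{H^{k-1}(K)}$ is the broken norm with $D_{\Gammah}^j$. Summing over $K$ and using the norm equivalence \eqref{eq:broken-full-norm-equivalence} gives
\[
  \sum_K\norm{g}_{H^{k-1}(K)}^2\le C\norm{\deltag z}_{H^{k-1}(\Gamma)}^2\le C\norm{z}_{H^{k+1}(\Gamma)}^2 .
\]
Since $m=k-1$ may equal $1$, the $m=1$ case is covered by \eqref{eq:extension-norm-equivalence}.

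To prove the local estimate we pull back by $F_K^r$. The mass-weighted inner product on $K$ becomes $(\hat a\,\hat\cdot,\hat\cdot)_{\widehat T}$, where $\hat a=|\partial_1F_K^r\times\partial_2F_K^r|\simeq h_K^2$. Since $\Pi_{Q_h}$ is a weighted $L^2$ projection onto $\mathcal P_{k-2}(\widehat T)$, it is $L^2$-stable. It also reproduces $\mathcal P_{k-2}$, because the weight $\hat a$ is bounded above and below relative to $h_K^2$.

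Let $\hat q\in\mathcal P_{k-2}(\widehat T)$ be the averaged Taylor polynomial of $\hat g=g\circ F_K^r$. Then
\[
  \norm{\eta}_{L^2(K)}\le \norm{g-q}_{L^2(K)}+\norm{\Pi_{Q_h}(g-q)}_{L^2(K)}\le C\norm{g-q}_{L^2(K)} ,
\]
and Bramble--Hilbert on $\widehat T$ gives $\norm{\hat g-\hat q}_{L^2(\widehat T)}\le C|\hat g|_{H^{k-1}(\widehat T)}$.

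For the gradient term, we first use an inverse estimate on the polynomial part $\Pi_{Q_h}(g-q)\in Q_h$. We then use a first-order Bramble--Hilbert bound for $\nabla(\hat g-\hat q)$. This yields the same bound on $h_K\norm{\nablagh\eta}_{L^2(K)}$.

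The step I expect to be the main obstacle is the scaling back from $\widehat T$ to $K$. By the chain rule for composition with a curved map, $|\hat g|_{H^{k-1}(\widehat T)}$ involves $D^m F_K^r$. Using \eqref{eq:parametric-shape-regularity}--\eqref{eq:curved-map-derivative-bounds}, we get
\[
  |\hat g|_{H^{k-1}(\widehat T)}\le Ch_K^{k-2}\sum_{j=1}^{k-1}\norm{D_{\Gammah}^jg}_{L^2(K)} ,
\]
since each term carries total scaling $h_K^{k-1}$. The Jacobian factor $h_K^{-1}$ from the change of measure then gives the claimed $h_K^{k-1}$.

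The lower-order derivatives $j<k-1$ appear here precisely because $F_K^r$ is not affine; this is the curved-element effect. They are harmless, since they are controlled by the full broken norm of $g$. If $k-1>r$, the derivatives $D^mF_K^r$ with $m>r$ vanish, so no extra geometric regularity is needed. Combining the three estimates proves \eqref{eq:projection-consistency}.
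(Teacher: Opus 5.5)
Your proposal is correct and follows the paper's own argument. Both use weighted Cauchy--Schwarz against the jump part of $\enorm{w_h}$, the scaled trace inequality \eqref{eq:scaled-trace}, the elementwise approximation property of $\Pi_{Q_h}$ on the mapped elements, and the broken norm equivalences of Lemma~\ref{lem:norm-trace}, with the $H^1$ equivalence covering $k=2$. You make explicit the pullback and Bramble--Hilbert step that the paper leaves implicit.

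The only slips are cosmetic:
\begin{itemize}
\item Reproduction of $\mathcal P_{k-2}$ by $\Pi_{Q_h}$ holds simply because it is a projection onto that space; the weight bounds matter only for comparing weighted and unweighted norms.
\item In the final scaling, mapping $\norm{\hat g-\hat q}_{L^2(\widehat T)}$ back to $K$ contributes a factor $h_K$, not $h_K^{-1}$. Together with your $h_K^{k-2}$, this gives $h_K^{k-1}$.
\end{itemize}
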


\begin{proof}
By the scaled trace inequality \eqref{eq:scaled-trace}, the approximation error of the local $L^2$ projection $\Pi_{Q_h}$ on each element $K\in\Kh$, and the norm equivalences in Lemma~\ref{lem:norm-trace}, one obtains
\begin{equation}\label{eq:projection-edge-estimate}
\left(
    \sum_{E\in\Eh}h_E
    \norm{\mean{(I-\Pi_{Q_h})(\deltag z)^e}}_{L^2(E)}^2
  \right)^{1/2}
  \le Ch^{k-1}\norm{z}_{H^{k+1}(\Gamma)}.
\end{equation}
Combining \eqref{eq:projection-edge-estimate} with the weighted Cauchy-Schwarz inequality and the definition \eqref{eq:energy-seminorm} proves \eqref{eq:projection-consistency}.
\end{proof}

We extend $a_h$ to $H_h^2(\Gamma_h)\times W_h$, with
\begin{equation}\label{def:pw-H2-space-h}
  H_h^2(\Gamma_h):=\{v\in C^0(\Gamma_h):v|_{K}\in H^2(K)
  \text{ for every }K\in\mathcal K_h\},
\end{equation}
such that for $(z,w_h)\in H_h^2(\Gamma_h)\times W_h$,  
\begin{equation}\label{eq:extended-reconstructed-form}
\begin{aligned}
  a_h(z,w_h)
  &:=(\deltagh z-R_hJ(z),
      \deltagh w_h-R_hJ(w_h))_{\Gammah}+
  \beta\sum_{E\in\Eh}h_E^{-1}
  (J_E(z),J_E(w_h))_E.
\end{aligned}
\end{equation}
If $z,w_h\in W_h$, this agrees with \eqref{eq:discrete-form}.
The extended form also satisfies $|a_h(z,w_h)|\le C_\beta\enorm{z}\enorm{w_h}$,
and the proof follows directly from Lemma \ref{lem:stability}. 

\begin{lemma}[Approximation by the reconstructed Laplacian]
\label{lem:reconstructed-laplacian-approximation}
For $z\in H^2(\Gamma)$,
\begin{equation}\label{eq:reconstructed-exact-extension}
  \norm{\tdeltagh z^e-(\deltag z)^e}_{L^2(\Gamma_h)}
  \le
  Ch^r\norm{z}_{H^2(\Gamma)}.
\end{equation}
If $z\in H^{k+1}(\Gamma)$, then, with $I_h^\#z^e$ defined as in
\eqref{eq:mean-corrected-interpolant},
\begin{equation}\label{eq:reconstructed-interpolant}
  \norm{\tdeltagh I_h^\#z^e-(\deltag z)^e}_{L^2(\Gamma_h)}
  \le
  C\left(
    h^{k-1}\norm{z}_{H^{k+1}(\Gamma)}
    +h^r\norm{z}_{H^2(\Gamma)}
  \right).
\end{equation}
\end{lemma}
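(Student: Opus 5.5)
For the first estimate \eqref{eq:reconstructed-exact-extension} I would split
\[
  \tdeltagh z^e-(\deltag z)^e
  =\bigl(\deltagh z^e-(\deltag z)^e\bigr)-R_hJ(z^e).
\]
The first term is bounded by $Ch^r\norm{z}_{H^2(\Gamma)}$ through \eqref{eq:laplacian-comparison-smooth}. The second term needs Lemma~\ref{lem:lifting-stability}, which is proved for $\phi\in L^2(\Eh)$ using only the definition \eqref{eq:lifting}, the trace inequality and the inverse estimate on $Q_h$. It therefore applies to $\phi=J(z^e)$, and combined with the global geometric jump estimate \eqref{eq:smooth-geometric-jump-global} it gives
\[
  \norm{R_hJ(z^e)}_{L^2(\Gammah)}^2
  \le C_R\sum_{E\in\Eh}h_E^{-1}\norm{J_E(z^e)}_{L^2(E)}^2
  \le Ch^{2r}\norm{z}_{H^2(\Gamma)}^2 .
\]
The triangle inequality then proves the first claim. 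One remark is needed here: $z^e$ is continuous and elementwise $H^2$ on $\Gammah$ (each element is smooth and $p$ is sufficiently regular), so $J_E(z^e)$ is well defined in $L^2(E)$.

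For the second estimate \eqref{eq:reconstructed-interpolant} I would insert $z^e$:
\[
  \tdeltagh I_h^\#z^e-(\deltag z)^e
  =-\bigl(\tdeltagh (z^e-I_h^\# z^e)\bigr)+\bigl(\tdeltagh z^e-(\deltag z)^e\bigr).
\]
This uses the linearity of $\tdeltagh$ on $H_h^2(\Gamma_h)$, i.e.\ linearity of $\deltagh$, $J$ and $R_h$. The second bracket is exactly the first estimate and is bounded by $Ch^r\norm{z}_{H^2(\Gamma)}$. For the first bracket, with $e_h:=z^e-I_h^\#z^e$, I would use
\[
  \norm{\tdeltagh e_h}_{L^2(\Gammah)}
  \le\norm{\deltagh e_h}_{L^2(\Gammah)}+\norm{R_hJ(e_h)}_{L^2(\Gammah)}
  \le (1+C_R^{1/2})\enorm{e_h},
\]
which again follows from Lemma~\ref{lem:lifting-stability}. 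The interpolation estimate \eqref{eq:interpolation-energy} then gives $\enorm{e_h}\le Ch^{k-1}\norm{z}_{H^{k+1}(\Gamma)}$, and the second claim follows.

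The only point that needs care is justifying that the lifting stability and the energy-norm interpolation bound apply to the nonpolynomial functions $z^e$ and $e_h$ on curved elements. No step is a genuine obstacle, because both earlier results were stated for general $L^2$ edge data and general elementwise-$H^2$ functions, and the mean correction in $I_h^\#$ is a constant that affects neither $\deltagh$ nor $J$.
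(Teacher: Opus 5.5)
Your proposal is correct and matches the paper's proof essentially step for step. The first bound comes from the triangle inequality, \eqref{eq:laplacian-comparison-smooth}, lifting stability \eqref{eq:lifting-stability}, and the geometric jump estimate \eqref{eq:smooth-geometric-jump-global}; the second comes from inserting $z^e$, using $\norm{\tdeltagh v}_{L^2(\Gamma_h)}\le C\enorm{v}$ on $H_h^2(\Gamma_h)$ together with \eqref{eq:interpolation-energy}, and then applying the first bound. Your remarks on applying lifting stability to nonpolynomial edge data, and on the mean-correction constant leaving $\deltagh$ and $J$ unchanged, are the justifications the paper uses implicitly.
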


\begin{proof}
The smooth Laplacian comparison
\eqref{eq:laplacian-comparison-smooth}, the lifting stability
\eqref{eq:lifting-stability}, and the geometric jump estimate
\eqref{eq:smooth-geometric-jump-global} give
\begin{align*}
  \norm{\tdeltagh z^e-(\deltag z)^e}_{L^2(\Gamma_h)}
  &\le
  \norm{\deltagh z^e-(\deltag z)^e}_{L^2(\Gamma_h)}
  +\norm{R_hJ(z^e)}_{L^2(\Gamma_h)}\le
  Ch^r\norm{z}_{H^2(\Gamma)}.
\end{align*}
This proves \eqref{eq:reconstructed-exact-extension}.  Moreover, since the lifting stability implies that $ \norm{\tdeltagh v}_{L^2(\Gamma_h)}\le C\enorm{v}$ for any $v\in H_h^2(\Gamma_h)$,  the triangle inequality,
\eqref{eq:interpolation-energy}, and
\eqref{eq:reconstructed-exact-extension} yield
\begin{align*}
  \norm{\tdeltagh I_h^\#z^e-(\deltag z)^e}_{L^2(\Gamma_h)}
  &\le
  \norm{\tdeltagh(I_h^\#z^e-z^e)}_{L^2(\Gamma_h)}
  +\norm{\tdeltagh z^e-(\deltag z)^e}_{L^2(\Gamma_h)}
  \\
  &\le
  C\left(
    h^{k-1}\norm{z}_{H^{k+1}(\Gamma)}
    +h^r\norm{z}_{H^2(\Gamma)}
  \right),
\end{align*}
which proves \eqref{eq:reconstructed-interpolant}.
\end{proof}

Towards estimating the consistency error, we next investigate the geometric consistency error between the following terms appearing in the bilinear form $a$ or $a_h$. 

\begin{lemma}[Comparison of the principal terms]
\label{lem:principal-comparison}
Let $u\in H^2(\Gamma)$ and $w_h\in W_h$.  Then
\begin{align}
  \left|
    (\deltagh u^e,\deltagh w_h)_{\Gammah}
    -\sum_{K^l\in\mathcal K_h^l}
      (\deltag u,\deltag w_h^l)_{K^l}
  \right|
  &\le Ch^r\norm{u}_{H^2(\Gamma)}\enorm{w_h},
  \label{eq:direct-element-comparison}
  \\
  \intertext{If, in addition, $u\in H^3(\Gamma)$, then}
  \left|
    \sum_{E^l\in\mathcal E_h^l}
      (\deltag u,J_{E^l}^{\,l}(w_h^l))_{E^l}
    -\sum_{E\in\Eh}
      (\mean{\deltagh u^e},J_E(w_h))_E
  \right|
  &\le Ch^r\norm{u}_{H^3(\Gamma)}\enorm{w_h}.
  \label{eq:direct-edge-comparison}
\end{align}
Finally, if $u\in H^{k+1}(\Gamma)$, then
\begin{equation}\label{eq:reconstructed-principal-comparison}
\begin{aligned}
  \bigg|
    ((\deltag u)^e,\tdeltagh w_h)_{\Gamma_h}
    &-
    \bigg[
      \sum_{K^l\in\mathcal K_h^l}
        (\deltag u,\deltag w_h^l)_{K^l}
      -
      \sum_{E^l\in\mathcal E_h^l}
        (\deltag u,J_{E^l}^{\,l}(w_h^l))_{E^l}
    \bigg]
  \bigg|
  \\
  &\le
  C\left(
    h^{k-1}\norm{u}_{H^{k+1}(\Gamma)}
    +h^r\norm{u}_{H^3(\Gamma)}
  \right)\enorm{w_h}.
\end{aligned}
\end{equation}
\end{lemma}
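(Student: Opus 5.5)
For \eqref{eq:direct-element-comparison} I would first transfer the lifted-mesh integral to $\Gammah$ by the change of variables $ds=\mu_h\,ds_h$:
\[
  \sum_{K^l}(\deltag u,\deltag w_h^l)_{K^l}=\bigl(\mu_h(\deltag u)^e,(\deltag w_h^l)^e\bigr)_{\Gammah}.
\]
The difference to be bounded then splits into three pieces:
\[
  \bigl(\deltagh u^e-(\deltag u)^e,\deltagh w_h\bigr)_{\Gammah}
  +\bigl((\deltag u)^e,\deltagh w_h-(\deltag w_h^l)^e\bigr)_{\Gammah}
  +\bigl((1-\mu_h)(\deltag u)^e,(\deltag w_h^l)^e\bigr)_{\Gammah}.
\]
I would bound the first piece by \eqref{eq:laplacian-comparison-smooth} and Cauchy--Schwarz, using $\norm{\deltagh w_h}\le\enorm{w_h}$. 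The second piece is handled by \eqref{eq:laplacian-comparison-discrete}. The third is $O(h^{r+1})$ by \eqref{eq:mu-errors}, once we note that $\norm{(\deltag w_h^l)^e}_{L^2(\Gammah)}\le \norm{\deltagh w_h}+Ch^r\enorm{w_h}\le C\enorm{w_h}$.

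For \eqref{eq:direct-edge-comparison} I would proceed in the same way on edges, using $ds_{E^l}=\mu_{h,E}\,ds_E$:
\[
  \sum_{E^l}(\deltag u,J^l_{E^l}(w_h^l))_{E^l}=\sum_E\bigl(\mu_{h,E}(\deltag u)^e,(J^l_{E^l}(w_h^l))^e\bigr)_E.
\]
I would then subtract and add $(\deltag u)^e J_E(w_h)$. This leaves three terms:
\begin{itemize}
\item \textbf{$\mu_{h,E}$ term.} The factor $(\mu_{h,E}-1)$ is $O(h^{r+1})$.
\item \textbf{Jump-comparison term.} This is $\bigl((\deltag u)^e,(J^l_{E^l}(w_h^l))^e-J_E(w_h)\bigr)_E$.
\item \textbf{Trace-mismatch term.} This is $\bigl((\deltag u)^e-\mean{\deltagh u^e},J_E(w_h)\bigr)_E$.
\end{itemize}
Each term is estimated by the weighted Cauchy--Schwarz inequality with weights $h_E^{1/2}$ and $h_E^{-1/2}$. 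The weight $h_E^{1/2}$ falls on the smooth factor, where the scaled trace inequality \eqref{eq:scaled-trace} together with the norm equivalences gives $\bigl(\sum_E h_E\norm{(\deltag u)^e}_{L^2(E)}^2\bigr)^{1/2}\le C\norm{u}_{H^3(\Gamma)}$. The weight $h_E^{-1/2}$ falls on the jump-type factor. Then the jump-comparison term is controlled by \eqref{eq:jump-geometry-comparison}, and the trace-mismatch term by \eqref{eq:edge-laplacian-comparison}. For the $\mu_{h,E}$ term I also need $\bigl(\sum_E h_E^{-1}\norm{(J^l_{E^l}(w_h^l))^e}_{L^2(E)}^2\bigr)^{1/2}\le C\enorm{w_h}$, which follows from \eqref{eq:jump-geometry-comparison} and the triangle inequality.

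For \eqref{eq:reconstructed-principal-comparison} I would expand the reconstructed Laplacian via \eqref{eq:projection-lifting-identity}:
\[
  \bigl((\deltag u)^e,\tdeltagh w_h\bigr)_{\Gammah}
  =\bigl((\deltag u)^e,\deltagh w_h\bigr)_{\Gammah}
  -\sum_E\bigl(\mean{\Pi_{Q_h}(\deltag u)^e},J_E(w_h)\bigr)_E.
\]
\begin{itemize}
\item \textbf{Volume term.} I replace $(\deltag u)^e$ by $\deltagh u^e$ at cost $Ch^r\norm{u}_{H^2}\enorm{w_h}$ using \eqref{eq:laplacian-comparison-smooth}, and then apply \eqref{eq:direct-element-comparison}.
\item \textbf{Edge term.} I write $\Pi_{Q_h}=I-(I-\Pi_{Q_h})$. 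The $(I-\Pi_{Q_h})$ part is bounded by $Ch^{k-1}\norm{u}_{H^{k+1}}\enorm{w_h}$ using Lemma~\ref{lem:projection-residual}. What remains is $\sum_E\bigl((\deltag u)^e,J_E(w_h)\bigr)_E$. I compare this with $\sum_E\bigl(\mean{\deltagh u^e},J_E(w_h)\bigr)_E$ via \eqref{eq:edge-laplacian-comparison}, and then with the lifted edge sum via \eqref{eq:direct-edge-comparison}.
\end{itemize}
Collecting the pieces gives the stated bound, since $H^{k+1}\subset H^3$ for $k\ge2$.

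The main obstacle is bookkeeping rather than any new idea: each geometric error must be paired with the correct weight $h_E^{\pm1/2}$ so that no negative power of $h$ survives. In particular, bounding $\norm{(\deltag w_h^l)^e}$ and the lifted jumps by $\enorm{w_h}$ relies on \eqref{eq:lower-order-control}, which is already built into the cited comparisons.
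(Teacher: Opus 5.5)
Your proposal is correct and follows the paper's proof essentially step for step. It uses the same changes of variables with $\mu_h$ and $\mu_{h,E}$ and the same three-term splittings for the element and edge comparisons. For the reconstructed comparison it makes the same add-and-subtract of $\deltagh u^e$, $\mean{\deltagh u^e}$ and $\mean{(I-\Pi_{Q_h})(\deltag u)^e}$ after the lifting identity \eqref{eq:projection-lifting-identity}. The only cosmetic difference is that you attach $(1-\mu_h)$ to $(\deltag w_h^l)^e$ rather than to $\deltagh w_h$, which costs one extra triangle inequality with \eqref{eq:laplacian-comparison-discrete}.
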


\begin{proof}
Changing variables from $K^l$ to $K$ yields 
\begin{equation*}
  \sum_{K^l\in\Kh^l}(\deltag u,\deltag w_h^l)_{K^l}=(\mu_h(\deltag u)^e,(\deltag w_h^l)^e)_{\Gammah}.
\end{equation*}
Then, 
\begin{align}\label{eq: delta-diff-add-sub}
 (\deltagh u^e,\deltagh w_h)_{\Gammah}&-\sum_{K^l\in\mathcal K_h^l}(\deltag u,\deltag w_h^l)_{K^l} = (\deltagh u^e-(\deltag u)^e,\deltagh w_h)_{\Gammah}\\ \nonumber
 &+((1-\mu_h)(\deltag u)^e,\deltagh w_h)_{\Gammah} +(\mu_h(\deltag u)^e,\deltagh w_h-(\deltag w_h^l)^e)_{\Gammah}.
\end{align}

For the first term, we estimate it using \eqref{eq:laplacian-comparison-smooth}. 
For the second term, we invoke the geometric error \eqref{eq:mu-errors}. The third term is handled by the estimate \eqref{eq:laplacian-comparison-discrete}.  
Therefore, \eqref{eq:direct-element-comparison} follows by applying the Cauchy-Schwarz inequality to \eqref{eq: delta-diff-add-sub}.   

For the edge term, we proceed similarly. After changing variables from $E^l$ to $E$,
\begin{equation*}
  \sum_{E^l\in \Eh^l}
  (\deltag u,J_{E^l}^{\,l}(w_h^l))_{E^l}
  =
  \sum_{E\in\Eh}
  (\mu_{h,E}(\deltag u)^e,(J_{E^l}^{\,l}(w_h^l))^e)_E.
\end{equation*}
Therefore,
\begin{align*}
\sum_{E^l\in\mathcal E_h^l}
      &(\deltag u,J_{E^l}^{\,l}(w_h^l))_{E^l}
    -\sum_{E\in\Eh}
      (\mean{\deltagh u^e},J_E(w_h))_E=\sum_{E\in\Eh}((\mu_{h,E}-1)(\deltag u)^e,(J_{E^l}^{\,l}(w_h^l))^e)_E \\
      &+\sum_{E\in\Eh}((\deltag u)^e,(J_{E^l}^{\,l}(w_h^l))^e-J_E(w_h))_E+\sum_{E\in\Eh}((\deltag u)^e-\mean{\deltagh u^e},J_E(w_h))_E.
\end{align*}
The first term is estimated using \eqref{eq:mu-errors}, and we apply \eqref{eq:jump-geometry-comparison} and \eqref{eq:edge-laplacian-comparison} to the second and third terms respectively. Then invoking the weighted Cauchy-Schwarz inequality and scaled trace inequality \eqref{eq:scaled-trace} we conclude the proof of \eqref{eq:direct-edge-comparison}.

It remains to prove \eqref{eq:reconstructed-principal-comparison}.  
By the lifting identity \eqref{eq:projection-lifting-identity},
\[
  ((\deltag u)^e,R_hJ(w_h))_{\Gamma_h}
  =
  \sum_{E\in\Eh}
  (\mean{\Pi_{Q_h}(\deltag u)^e},J_E(w_h))_E.
\]
Adding and subtracting the two principal terms already compared above gives
\begin{align*}
  &((\deltag u)^e,\tdeltagh w_h)_{\Gamma_h}
  -
  \bigg[
    \sum_{K^l\in\mathcal K_h^l}
      (\deltag u,\deltag w_h^l)_{K^l}
    -
    \sum_{E^l\in\mathcal E_h^l}
      (\deltag u,J_{E^l}^{\,l}(w_h^l))_{E^l}
  \bigg]
  \\
  ={}&
  \bigg[
    (\deltagh u^e,\deltagh w_h)_{\Gamma_h}
    -
    \sum_{K^l\in\mathcal K_h^l}
      (\deltag u,\deltag w_h^l)_{K^l}
  \bigg]
  -
  (\deltagh u^e-(\deltag u)^e,\deltagh w_h)_{\Gamma_h}
  \\
  &+
  \bigg[
    \sum_{E^l\in\mathcal E_h^l}
      (\deltag u,J_{E^l}^{\,l}(w_h^l))_{E^l}
    -
    \sum_{E\in\Eh}
      (\mean{\deltagh u^e},J_E(w_h))_E
  \bigg]
  \\
  &+
  \sum_{E\in\Eh}
  \left(
    \mean{\deltagh u^e-(\deltag u)^e}
    +
    \mean{(I-\Pi_{Q_h})(\deltag u)^e},
    J_E(w_h)
  \right)_E.
\end{align*}
The first three terms are bounded by \eqref{eq:direct-element-comparison}, \eqref{eq:laplacian-comparison-smooth}, and \eqref{eq:direct-edge-comparison}, respectively.  
The two terms in the last line are bounded by \eqref{eq:edge-laplacian-comparison} along with the weighted Cauchy-Schwarz inequality, and Lemma~\ref{lem:projection-residual}.  
This proves \eqref{eq:reconstructed-principal-comparison}.
\end{proof}

\begin{lemma}[Consistency of the exact extension]
\label{lem:exact-extension-consistency}
Let $u\in H^{s_k}_{\#}(\Gamma)$ solve \eqref{eq:strong-problem}.  Then, for every
$w_h\in W_h$,
\begin{equation}\label{eq:exact-extension-discrete-load}
|a_h(u^e,w_h)-\ell_h(w_h)|
  \le
  C\left(
    h^{k-1}\norm{u}_{H^{k+1}(\Gamma)}
    +h^r\norm{u}_{H^3(\Gamma)}
    +h^{r+1}\norm{u}_{H^4(\Gamma)}
  \right)\enorm{w_h}.
\end{equation}
\end{lemma}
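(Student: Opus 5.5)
We expand $a_h(u^e,w_h)$ using the extended form \eqref{eq:extended-reconstructed-form} and split it into the part seen by the exact problem plus geometric remainders. Writing $\tdeltagh u^e=(\deltag u)^e+(\tdeltagh u^e-(\deltag u)^e)$, we get
\begin{equation*}
  a_h(u^e,w_h)
  =((\deltag u)^e,\tdeltagh w_h)_{\Gammah}
  +(\tdeltagh u^e-(\deltag u)^e,\tdeltagh w_h)_{\Gammah}
  +\beta\sum_{E\in\Eh}h_E^{-1}(J_E(u^e),J_E(w_h))_E .
\end{equation*}

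For the first term we invoke \eqref{eq:reconstructed-principal-comparison}. It replaces $((\deltag u)^e,\tdeltagh w_h)_{\Gammah}$ by the broken exact expression $\sum_{K^l}(\deltag u,\deltag w_h^l)_{K^l}-\sum_{E^l}(\deltag u,J^l_{E^l}(w_h^l))_{E^l}$, at a cost of $C(h^{k-1}\norm{u}_{H^{k+1}}+h^r\norm{u}_{H^3})\enorm{w_h}$. The lift $w_h^l$ is continuous and piecewise $H^2$ on $\mathcal K_h^l$, so it lies in $H_h^2(\Gamma)$. Lemma~\ref{lem:exact-broken} therefore identifies this broken expression with $(f,w_h^l)_\Gamma$. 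Lemma~\ref{lem:known-load-consistency} then replaces $(f,w_h^l)_\Gamma$ by $\ell_h(w_h)$ with error $Ch^{r+1}\norm{f}_{L^2}\enorm{w_h}$. Since $f=\deltag^2u$, we have $\norm{f}_{L^2}\le C\norm{u}_{H^4}$, which produces the $h^{r+1}\norm{u}_{H^4}$ term. Here $u\in H^{s_k}$ with $s_k\ge4$, so Lemma~\ref{lem:exact-broken} applies.

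The remaining two terms are pure geometric consistency errors. By Cauchy--Schwarz and $\norm{\tdeltagh w_h}_{L^2}\le C\enorm{w_h}$ (lifting stability), the second term is at most $\norm{\tdeltagh u^e-(\deltag u)^e}_{L^2(\Gammah)}\,C\enorm{w_h}$. By \eqref{eq:reconstructed-exact-extension} this is bounded by $Ch^r\norm{u}_{H^2}\enorm{w_h}$. For the penalty term, weighted Cauchy--Schwarz together with \eqref{eq:smooth-geometric-jump-global} gives the bound $\beta\,Ch^r\norm{u}_{H^2}\enorm{w_h}$. Both bounds are absorbed into $h^r\norm{u}_{H^3}$.

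Collecting the four contributions gives \eqref{eq:exact-extension-discrete-load}. The only step needing care is bookkeeping. We must check that \eqref{eq:reconstructed-principal-comparison} is applied with $\deltag u$ as the exact Laplacian, not with $\deltagh u^e$, which is why the decomposition above is chosen. We must also check that the mean-zero hypotheses of Lemma~\ref{lem:known-load-consistency} hold, which follows from \eqref{eq:compatibility}. The heavy lifting has already been done in Lemma~\ref{lem:principal-comparison}, so no step is a genuine obstacle.
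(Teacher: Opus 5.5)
Your proposal is correct and is essentially the paper's own proof. It uses the same four-term splitting: the reconstructed-Laplacian defect bounded via \eqref{eq:reconstructed-exact-extension}, the principal term converted to $(f,w_h^l)_\Gamma$ through Lemma~\ref{lem:exact-broken} and \eqref{eq:reconstructed-principal-comparison}, the penalty term via \eqref{eq:smooth-geometric-jump-global}, and the load defect via Lemma~\ref{lem:known-load-consistency} with $\norm{f}_{L^2(\Gamma)}\le C\norm{u}_{H^4(\Gamma)}$, while your checks that $w_h^l\in H_h^2(\Gamma)$ and $s_k\ge4$ make explicit the hypotheses the paper uses implicitly.
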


\begin{proof}
We compute 
\begin{align*}
  a_h(u^e,w_h)-\ell_h(w_h)
  ={}&
  (\tdeltagh u^e-(\deltag u)^e,\tdeltagh w_h)_{\Gamma_h}
  +
  \bigl[
    ((\deltag u)^e,\tdeltagh w_h)_{\Gamma_h}
    -(f,w_h^l)_\Gamma
  \bigr]
  \\
  &+
  \beta\sum_{E\in\Eh}h_E^{-1}
  (J_E(u^e),J_E(w_h))_E
  +
  \bigl[
    (f,w_h^l)_\Gamma-\ell_h(w_h)
  \bigr].
\end{align*}
The first term is bounded by \eqref{eq:reconstructed-exact-extension} and $\norm{\tdeltagh w_h}_{L^2(\Gamma_h)}\le C\enorm{w_h}$.  
Substituting the exact broken identity \eqref{eq:exact-broken} into the second term, we then apply \eqref{eq:reconstructed-principal-comparison} to get the desired estimates.  
The stabilization term is bounded by \eqref{eq:smooth-geometric-jump-global} and the weighted Cauchy-Schwarz inequality. 
The final term is bounded by Lemma~\ref{lem:known-load-consistency}.  Consequently,
\begin{align*}
  |a_h(u^e,w_h)-\ell_h(w_h)|
  \le C\Big(
    &h^{k-1}\norm{u}_{H^{k+1}(\Gamma)}
    +h^r\norm{u}_{H^3(\Gamma)}
    +h^{r+1}\norm{f}_{L^2(\Gamma)}
  \Big)\enorm{w_h}.
\end{align*}
Since $f=\deltag^2u$,
$\norm{f}_{L^2(\Gamma)}\le C\norm{u}_{H^4(\Gamma)}$, and
\eqref{eq:exact-extension-discrete-load} follows.
\end{proof}

\begin{theorem}[Consistency error]
\label{thm:consistency-error}
Let $u\in H^{s_k}_{\#}(\Gamma)$ solve \eqref{eq:strong-problem}.  Then, for every
$w_h\in W_h$,
\begin{equation}\label{eq:proved-consistency-estimate}
  |a_h(I_h^\#u^e,w_h)-\ell_h(w_h)|
  \le
  C\left(
    h^{k-1}\norm{u}_{H^{k+1}(\Gamma)}
    +h^r\norm{u}_{H^3(\Gamma)}
    +h^{r+1}\norm{u}_{H^4(\Gamma)}
  \right)\enorm{w_h}.
\end{equation}
\end{theorem}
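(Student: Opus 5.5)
The plan is to reduce the statement to Lemma~\ref{lem:exact-extension-consistency} by splitting off the interpolation error. Since $I_h^\#u^e\in W_h\subset H_h^2(\Gamma_h)$ and $u^e\in H_h^2(\Gamma_h)$ (it is continuous and elementwise smooth, because $u\in H^{s_k}(\Gamma)$ with $s_k\ge4$ and the closest-point pullbacks are regular), the extended form \eqref{eq:extended-reconstructed-form} is linear in its first argument. This gives
\[
  a_h(I_h^\#u^e,w_h)-\ell_h(w_h)
  =a_h(I_h^\#u^e-u^e,w_h)
  +\bigl[a_h(u^e,w_h)-\ell_h(w_h)\bigr].
\]

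The second bracket is bounded directly by \eqref{eq:exact-extension-discrete-load}. That bound already has the claimed form, with the three contributions $h^{k-1}\norm{u}_{H^{k+1}(\Gamma)}$, $h^r\norm{u}_{H^3(\Gamma)}$ and $h^{r+1}\norm{u}_{H^4(\Gamma)}$.

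For the first term I will use the continuity of the extended form, $|a_h(z,w_h)|\le C_\beta\enorm{z}\enorm{w_h}$ for $z\in H_h^2(\Gamma_h)$. This follows from the Cauchy--Schwarz argument of Lemma~\ref{lem:stability} together with Lemma~\ref{lem:lifting-stability}, since $\norm{\tdeltagh z}_{L^2(\Gammah)}\le C\enorm{z}$. Taking $z=I_h^\#u^e-u^e$, the interpolation estimate \eqref{eq:interpolation-energy} gives $\enorm{u^e-I_h^\#u^e}\le Ch^{k-1}\norm{u}_{H^{k+1}(\Gamma)}$. This requires $u\in H^{k+1}(\Gamma)$, which holds because $s_k\ge k+1$.

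The only points to check are that the seminorm is invariant under the constant mean shift and that $u^e$ lies in the domain of the extended form. The first is clear, because constants have vanishing $\deltagh$ and $J_E$. So no step is a genuine obstacle: all the hard work, namely geometric consistency and the projection residual, has already been done in Lemma~\ref{lem:exact-extension-consistency}. Adding the two bounds yields \eqref{eq:proved-consistency-estimate}.
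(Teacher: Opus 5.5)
Your proposal is correct and matches the paper's proof: add and subtract $a_h(u^e,w_h)$, bound $a_h(I_h^\#u^e-u^e,w_h)$ by continuity of the extended form together with \eqref{eq:interpolation-energy}, and bound the remainder by Lemma~\ref{lem:exact-extension-consistency}. Your checks that $u^e\in H_h^2(\Gamma_h)$ and that the mean shift leaves $\enorm{\cdot}$ unchanged are details the paper leaves implicit.
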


\begin{proof}
Adding and subtracting $a_h(u^e,w_h)$ gives
\begin{align*}
  a_h(I_h^\#u^e,w_h)-\ell_h(w_h)
  ={}&a_h(I_h^\#u^e-u^e,w_h)
  +a_h(u^e,w_h)-\ell_h(w_h).
\end{align*}
The first term is bounded using the continuity of $a_h$ and \eqref{eq:interpolation-energy}, and the second is bounded by Lemma~\ref{lem:exact-extension-consistency}.
\end{proof}

\begin{remark}[Regularity bookkeeping and the mean-zero condition]
\label{rem:regularity-mean-zero}
The separate Sobolev indices in
\eqref{eq:proved-consistency-estimate} identify where regularity is used.
The reconstructed Laplacian of the exact extension requires only
$u\in H^2(\Gamma)$, the geometric edge comparison requires
$u\in H^3(\Gamma)$, and interpolation and the degree-$(k-2)$ projection
residual require $u\in H^{k+1}(\Gamma)$.  The assumption
$u\in H^4(\Gamma)$ is used for the strong equation
$f=\deltag^2u\in L^2(\Gamma)$, the exact broken identity, and the load term;
the latter carries the higher factor $h^{r+1}$.  Hence
$s_k=\max\{k+1,4\}$ is the smallest common integer regularity index for the
argument, although it is not needed in every intermediate estimate.

No mean-zero condition on $w_h$ is used in the consistency analysis through
Theorem~\ref{thm:consistency-error}.  Indeed,
$a_h(v,1)=0$ and $\ell_h(1)=0$, while $\enorm{\cdot}$ is insensitive to
constants.  The restriction to $W_{h,\#}$ enters in the Strang estimate and
the discrete problem because it removes the constant kernel, makes
\eqref{eq:energy-seminorm} a norm, and permits coercivity to control the
discrete error uniquely.
\end{remark}

\subsection{Strang estimate and convergence rate in energy norm}

\begin{lemma}[Strang estimate]
\label{lem:strang}
Let $u$ solve \eqref{eq:strong-problem} and $u_h$ solve
\eqref{eq:discrete-problem}.  Then
\begin{equation}\label{eq:strang}
  \enorm{u^e-u_h}
  \le C_\beta
  \inf_{v_h\in W_{h,\#}}
  \left(
    \enorm{u^e-v_h}
    +\sup_{0\ne w_h\in W_{h,\#}}
    \frac{|a_h(v_h,w_h)-\ell_h(w_h)|}{\enorm{w_h}}
  \right).
\end{equation}
\end{lemma}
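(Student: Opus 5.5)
Fix an arbitrary $v_h\in W_{h,\#}$ and split the error with the triangle inequality,
\[
  \enorm{u^e-u_h}\le\enorm{u^e-v_h}+\enorm{v_h-u_h}.
\]
Only the discrete part $\enorm{v_h-u_h}$ remains to be bounded. Since $v_h-u_h\in W_{h,\#}$, we may apply the coercivity \eqref{eq:coercivity} of Lemma~\ref{lem:stability} to $w_h:=v_h-u_h$. We then use the discrete problem \eqref{eq:discrete-problem}, which is valid because the test function lies in $W_{h,\#}$, to replace $a_h(u_h,w_h)$ by $\ell_h(w_h)$:
\[
  c_\beta\enorm{w_h}^2\le a_h(v_h-u_h,w_h)=a_h(v_h,w_h)-\ell_h(w_h).
\]
No consistency with the continuous problem is needed at this stage. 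The residual is exactly the quantity appearing under the supremum in \eqref{eq:strang}.

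If $w_h=0$, the bound on $\enorm{v_h-u_h}$ is trivial. Otherwise, Lemma~\ref{lem:kernel} shows that $\enorm{\cdot}$ is a norm on $W_{h,\#}$, so $\enorm{w_h}\ne0$ and we may divide:
\[
  \enorm{v_h-u_h}\le c_\beta^{-1}\frac{|a_h(v_h,w_h)-\ell_h(w_h)|}{\enorm{w_h}}\le c_\beta^{-1}\sup_{0\ne w_h\in W_{h,\#}}\frac{|a_h(v_h,w_h)-\ell_h(w_h)|}{\enorm{w_h}}.
\]
Combining this with the triangle inequality gives the claimed bound with $C_\beta:=\max\{1,c_\beta^{-1}\}$. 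Because $v_h\in W_{h,\#}$ was arbitrary, we may take the infimum over $v_h$.

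The argument is essentially routine. The only point to check is that $\enorm{u^e-v_h}$ is well defined, which it is because $u^e$ is continuous and elementwise $H^2$ on $\Gamma_h$. The continuity constant plays no role in this form of the estimate; it enters only afterwards, when the consistency term is controlled through Theorem~\ref{thm:consistency-error}.
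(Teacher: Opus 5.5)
Your proof is correct and follows the same argument as the paper: the triangle inequality, then coercivity applied to $v_h-u_h\in W_{h,\#}$ with $a_h(u_h,w_h)=\ell_h(w_h)$, and finally the infimum over $v_h$. Your explicit constant $C_\beta=\max\{1,c_\beta^{-1}\}$ and the check that $\enorm{w_h}\neq 0$ before dividing simply spell out what the paper leaves implicit.
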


\begin{proof}
For arbitrary $v_h\in W_{h,\#}$, the triangle inequality and coercivity give
\begin{align*}
  \enorm{u^e-u_h}
  &\le\enorm{u^e-v_h}+\enorm{v_h-u_h},
  \\
  c_\beta\enorm{v_h-u_h}
  &\le
  \sup_{0\ne w_h\in W_{h,\#}}
  \frac{|a_h(v_h,w_h)-\ell_h(w_h)|}{\enorm{w_h}}.
\end{align*}
Taking the infimum over $v_h$ proves \eqref{eq:strang}.
\end{proof}

\begin{theorem}[Energy error estimate]\label{thm:energy}
Let $u\in H^{s_k}_{\#}(\Gamma)$ solve \eqref{eq:strong-problem} and let $u_h$ solve
\eqref{eq:discrete-problem}. Then, 
\begin{equation}\label{eq:energy-error}
  \enorm{u^e-u_h}\le C(h^{k-1}+h^r)\norm{f}_{H^{s_k-4}(\Gamma)}.
\end{equation}
Consequently, the energy convergence rate is
$\min\{k-1,r\}$.  In particular, the optimal degree-$k$ rate $k-1$ is
obtained whenever $r\ge k-1$.
\end{theorem}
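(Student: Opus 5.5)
The plan is to combine the Strang estimate of Lemma~\ref{lem:strang} with the choice $v_h:=I_h^\#u^e\in W_{h,\#}$. This reduces the energy error to two quantities: the interpolation error $\enorm{u^e-I_h^\#u^e}$, and the consistency functional $\sup_{w_h}|a_h(I_h^\#u^e,w_h)-\ell_h(w_h)|/\enorm{w_h}$. Both have already been estimated.

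First, by \eqref{eq:interpolation-energy}, the interpolation term is bounded by $Ch^{k-1}\norm{u}_{H^{k+1}(\Gamma)}$.

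Second, Theorem~\ref{thm:consistency-error} bounds the consistency term, uniformly over $w_h\in W_{h,\#}\subset W_h$, by
\[
C\bigl(h^{k-1}\norm{u}_{H^{k+1}(\Gamma)}+h^r\norm{u}_{H^3(\Gamma)}+h^{r+1}\norm{u}_{H^4(\Gamma)}\bigr).
\]
Substituting both bounds into \eqref{eq:strang} and using $h^{r+1}\le h^r$ for $h\le1$ gives
\[
\enorm{u^e-u_h}\le C(h^{k-1}+h^r)\norm{u}_{H^{s_k}(\Gamma)}.
\]
Here each of $\norm{u}_{H^{k+1}}$, $\norm{u}_{H^3}$ and $\norm{u}_{H^4}$ is dominated by $\norm{u}_{H^{s_k}}$, since $s_k=\max\{k+1,4\}$.

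Third, the elliptic regularity estimate \eqref{eq:higher-regularity} with $s=s_k$ gives $\norm{u}_{H^{s_k}(\Gamma)}\le C\norm{f}_{H^{s_k-4}(\Gamma)}$. This is admissible because $\Gamma\in C^{s_k,1}\subset C^{s_k-1,1}$ and $f\in H^{s_k-4}_\#(\Gamma)$. Together with the previous step this yields \eqref{eq:energy-error}. The stated rate $\min\{k-1,r\}$ is then immediate, and the rate equals $k-1$ exactly when $r\ge k-1$.

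The only point requiring care is bookkeeping. The constant $C_\beta$ from Lemma~\ref{lem:strang} must be absorbed into $C$, and $h$ must be small enough that all the geometric assumptions hold. For larger $h$ the estimate is trivial after enlarging $C$: stability gives $\enorm{u_h}\le C\norm{f}$, and \eqref{eq:broken-full-norm-equivalence} bounds $\enorm{u^e}$. There is no substantive obstacle, since all the analytic difficulty was isolated in Theorem~\ref{thm:consistency-error}.
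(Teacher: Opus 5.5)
Your proposal is correct and follows the paper's proof exactly: Lemma~\ref{lem:strang} with $v_h=I_h^\#u^e$, the interpolation bound \eqref{eq:interpolation-energy}, Theorem~\ref{thm:consistency-error}, and the regularity estimate \eqref{eq:higher-regularity} with $s=s_k$. Your closing remark about larger $h$ is not needed, since the paper works with $h$ sufficiently small throughout as its geometric assumptions require, but it does not affect the argument.
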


\begin{proof}
We choose $v_h=I_h^\#u^e$ in Lemma~\ref{lem:strang} and apply \eqref{eq:interpolation-energy}, Theorem~\ref{thm:consistency-error}, and the regularity estimate \eqref{eq:higher-regularity} to conclude 
\begin{equation}\label{eq:energy-error-2}
\begin{aligned}
  \enorm{u^e-u_h}
  &\le
  C\left(
    h^{k-1}\norm{u}_{H^{k+1}(\Gamma)}
    +h^r\norm{u}_{H^3(\Gamma)}
    +h^{r+1}\norm{u}_{H^4(\Gamma)}
  \right)
  \\
  &\le C(h^{k-1}+h^r)\norm{u}_{H^{s_k}(\Gamma)}
  \le C(h^{k-1}+h^r)\norm{f}_{H^{s_k-4}(\Gamma)}.
\end{aligned}
\end{equation}
\end{proof}

\begin{corollary}[Convergence of the discrete reconstructed Laplacian]
\label{cor:reconstructed-laplacian}
Let $u\in H^{s_k}_{\#}(\Gamma)$ solve \eqref{eq:strong-problem} and let $u_h$ solve \eqref{eq:discrete-problem}. Then, 
\begin{equation}\label{eq:reconstructed-solution-error}
\begin{aligned}
  \norm{\tdeltagh u_h-(\deltag u)^e}_{L^2(\Gamma_h)}
  \le C(h^{k-1}+h^r)\norm{f}_{H^{s_k-4}(\Gamma)}.
\end{aligned}
\end{equation}
\end{corollary}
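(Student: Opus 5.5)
The plan is a triangle inequality through the exact extension, so that the bound reduces to the energy estimate of Theorem~\ref{thm:energy} and the approximation property of Lemma~\ref{lem:reconstructed-laplacian-approximation}. Because $R_h$ and $J$ are linear and $u^e\in H_h^2(\Gammah)$, the operator $\tdeltagh$ extends linearly to $H_h^2(\Gammah)$. We then write
\[
  \tdeltagh u_h-(\deltag u)^e
  =\tdeltagh(u_h-u^e)+\bigl(\tdeltagh u^e-(\deltag u)^e\bigr).
\]

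For the first term we use the bound $\norm{\tdeltagh v}_{L^2(\Gammah)}\le C\enorm{v}$, valid for any $v\in H_h^2(\Gammah)$. This follows from the definition of $\tdeltagh$, the lifting stability \eqref{eq:lifting-stability}, and the definition \eqref{eq:energy-seminorm}, and it was already noted in the proof of Lemma~\ref{lem:reconstructed-laplacian-approximation}. Applying it with $v=u_h-u^e$ and invoking \eqref{eq:energy-error} gives
\[
  \norm{\tdeltagh(u_h-u^e)}_{L^2(\Gammah)}\le C(h^{k-1}+h^r)\norm{f}_{H^{s_k-4}(\Gamma)}.
\]

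For the second term we apply \eqref{eq:reconstructed-exact-extension} with $z=u$. This gives a bound $Ch^r\norm{u}_{H^2(\Gamma)}$. The regularity estimate \eqref{eq:higher-regularity} then gives $\norm{u}_{H^2(\Gamma)}\le\norm{u}_{H^{s_k}(\Gamma)}\le C\norm{f}_{H^{s_k-4}(\Gamma)}$. Summing the two bounds yields \eqref{eq:reconstructed-solution-error}.

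There is no real obstacle, since every ingredient is already available. The only point to check is that $\tdeltagh$ applies to the nonpolynomial function $u^e$. This holds because $u^e$ is continuous and elementwise $H^2$, so $J_E(u^e)\in L^2(E)$ and $R_hJ(u^e)$ is well defined by Definition~\ref{def:lifting}. In particular, no projection issue arises from \eqref{eq:no-laplacian-inclusion}, because $R_h$ is applied only to edge data.
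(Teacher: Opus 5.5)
Your proof is correct and follows the paper's argument: a triangle inequality, the bound $\norm{\tdeltagh v}_{L^2(\Gamma_h)}\le C\enorm{v}$, Lemma~\ref{lem:reconstructed-laplacian-approximation}, Theorem~\ref{thm:energy}, and the regularity estimate \eqref{eq:higher-regularity}. The only difference is the intermediate function. The paper inserts $I_h^\#u^e$ and uses \eqref{eq:reconstructed-interpolant} together with \eqref{eq:interpolation-energy}, whereas you insert $u^e$ and use \eqref{eq:reconstructed-exact-extension} directly, which is slightly shorter because Theorem~\ref{thm:energy} already bounds $\enorm{u^e-u_h}$.
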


\begin{proof}
By the triangle inequality and \eqref{eq:reconstructed-interpolant},
\begin{align*}
  \norm{\tdeltagh u_h-(\deltag u)^e}_{L^2(\Gamma_h)}
  &\le
  \norm{\tdeltagh(u_h-I_h^\#u^e)}_{L^2(\Gamma_h)}
  +
  \norm{\tdeltagh I_h^\#u^e-(\deltag u)^e}_{L^2(\Gamma_h)}
  \\
  &\le
  C\enorm{u_h-I_h^\#u^e}
  +
  C\left(
    h^{k-1}\norm{u}_{H^{k+1}(\Gamma)}
    +h^r\norm{u}_{H^2(\Gamma)}
  \right).
\end{align*}
The triangle inequality in $\enorm{\cdot}$, the interpolation estimate
\eqref{eq:interpolation-energy}, and Theorem~\ref{thm:energy} prove
\eqref{eq:reconstructed-solution-error}.
\end{proof}

The three terms in the first bound of \eqref{eq:energy-error-2} have distinct origins.  
The term $h^{k-1}$ is both the best-approximation error of the $\mathcal P_k$ solution space in the energy norm and the projection residual generated by the degree-$(k-2)$ lifting space.  The term $h^r$ is the leading geometric consistency error.  
The $h^{r+1}$ term is the higher-order geometric
load error. 
Raising the lifting degree above $k-2$ can reduce the projection residual for sufficiently smooth solutions, but it cannot improve the $h^{k-1}$ best-approximation rate.  

\subsection{An $L^2$-error estimate}
\label{subsec:L2-error}
We derive an $L^2$-error estimate by adapting the duality argument of \cite[Theorem~5.11]{larsson2017continuous} to arbitrary $r\ge1$ and $k\ge2$ and to the reconstructed surface Laplacian formulation.  
Set
\begin{equation}\label{eq:dual-orders}
  \alpha_k:=\min\{k-1,2\},
  \qquad
  q_k:=k-1+\alpha_k
  =
  \begin{cases}
    2,   & k=2,\\
    k+1, & k\ge3.
  \end{cases}
\end{equation}
The number $\alpha_k$ is the energy-norm approximation order of the dual
solution under $H^4(\Gamma)$ regularity.
For a closed surface $\Gamma$, define
\begin{equation}\label{eq:mean-projection-L2}
   \norm{v}_{L^2(\Gamma)/\mathbb R}
  :=
  \norm{v-|\Gamma|^{-1}(v,1)_\Gamma}_{L^2(\Gamma)}.
\end{equation}

For the duality argument, we use the same expression
\eqref{eq:extended-reconstructed-form} to extend $a_h$ to $H_h^2(\Gamma_h)\times H_h^2(\Gamma_h)$.
The lifting stability estimate implies the continuity $ |a_h(v,w)|\le C_\beta\enorm{v}\enorm{w}$. 

\begin{lemma}[Consistency tested by a dual interpolation error]
\label{lem:dual-interpolation-consistency}
Let $u\in H^{s_k}_\#(\Gamma)$ solve
\eqref{eq:strong-problem}, let $\phi\in H^4(\Gamma)$, and set $\phi_h:=I_h^\#\phi^e$ and $\eta_h:=\phi_h-\phi^e$.
Then
\begin{equation}\label{eq:dual-interpolation-consistency}
  \left|
    a_h(u^e,\eta_h)
    -(f,\eta_h^l)_\Gamma
  \right|
  \le
  C\left(
    h^{k-1+\alpha_k}
    +
    h^{r+\alpha_k}
  \right)
  \norm{u}_{H^{s_k}(\Gamma)}
  \norm{\phi}_{H^4(\Gamma)}.
\end{equation}
\end{lemma}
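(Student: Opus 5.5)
The plan is to repeat the splitting used in the proof of Lemma~\ref{lem:exact-extension-consistency}, with $w_h$ replaced by $\eta_h$. The test function $\eta_h$ is not discrete, so no estimate may rely on inverse inequalities or on the discrete Poincar\'e bound \eqref{eq:lower-order-control}. Instead, every factor involving $\eta_h$ is bounded directly by interpolation estimates of order $h^{\alpha_k}$ and better.

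First, the constant in $I_h^\#$ is harmless. We have $\eta_h\in H_h^2(\Gamma_h)$, and $\eta_h$ differs from $I_h\phi^e-\phi^e$ by a constant. Also $a_h(u^e,1)=0$ and $(f,1)_\Gamma=0$. I will then record the approximation properties of $\eta_h$. Pulling back by $F_K^r$ and applying Bramble--Hilbert with the available regularity $\min\{k+1,4\}$, exactly as in \eqref{eq:mapped-interpolation-j12}, gives the following bounds (modulo constants):
\[
\sum_K|\eta_h|_{H^1(K)}^2\le Ch^{2(\alpha_k+1)}\norm{\phi}_{H^4}^2,
\qquad
\sum_K|\eta_h|_{H^2(K)}^2\le Ch^{2\alpha_k}\norm{\phi}_{H^4}^2 .
\]
The scaled trace argument from Lemma~\ref{lem:known-approximation} then yields $\enorm{\eta_h}\le Ch^{\alpha_k}\norm{\phi}_{H^4(\Gamma)}$, and hence $\norm{\tdeltagh\eta_h}_{L^2(\Gamma_h)}\le Ch^{\alpha_k}\norm{\phi}_{H^4}$ by lifting stability.

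Next I decompose
\[
a_h(u^e,\eta_h)-(f,\eta_h^l)_\Gamma=(\tdeltagh u^e-(\deltag u)^e,\tdeltagh\eta_h)_{\Gamma_h}+\bigl[((\deltag u)^e,\tdeltagh\eta_h)_{\Gamma_h}-(f,\eta_h^l)_\Gamma\bigr]+\beta\sum_E h_E^{-1}(J_E(u^e),J_E(\eta_h))_E .
\]
The first term is bounded by $Ch^{r+\alpha_k}$ using \eqref{eq:reconstructed-exact-extension}. The third term is bounded by $Ch^{r+\alpha_k}$ using \eqref{eq:smooth-geometric-jump-global}. For the middle term, $\eta_h^l\in H_h^2(\Gamma)$ is admissible in Lemma~\ref{lem:exact-broken}, so $(f,\eta_h^l)_\Gamma$ equals the broken element-minus-edge expression. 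It then remains to redo \eqref{eq:reconstructed-principal-comparison} with $w_h$ replaced by $\eta_h$.

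In that comparison I expand exactly as in the proof of Lemma~\ref{lem:principal-comparison}. The terms that only involve $u$ carry $h^r$ (or $h^{r+1}$, or $h^{k-1}$ from Lemma~\ref{lem:projection-residual}). They are paired with $\norm{\deltagh\eta_h}$ or with $(\sum_E h_E^{-1}\norm{J_E(\eta_h)}^2)^{1/2}$, both of which are $\le\enorm{\eta_h}\le Ch^{\alpha_k}$. This produces $h^{r+\alpha_k}+h^{k-1+\alpha_k}$; note that Lemma~\ref{lem:projection-residual} already holds for any piecewise-$H^2$ second argument. The remaining terms are the two geometric comparisons that previously used discreteness: \eqref{eq:laplacian-comparison-discrete} and \eqref{eq:jump-geometry-comparison} applied to $\eta_h$, together with the $\mu_h-1$ factors.

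The main obstacle is these two geometric comparisons. For the Laplacian comparison I will use the pointwise local perturbation bound (Lemma~\ref{lem:local-laplacian-perturbation}) in the form $|\deltagh\eta_h-(\deltag\eta_h^l)^e|\le C(h^r|\nablagh\eta_h|+h^{r+1}|D^2_{\Gamma_h}\eta_h|)$. This gives $Ch^r(h^{\alpha_k+1}+h^{1+\alpha_k})\norm{\phi}_{H^4}$, which is even better than needed. For the jump comparison, \eqref{eq:jump-transformation} and \eqref{eq:inverse-conormal-comparison} give a pointwise factor $h^{r+1}|\nablagh\eta_h|$ on $E$. The weighted sum is then controlled by the scaled trace inequality \eqref{eq:scaled-trace}, without inverse estimates:
\[
\sum_E h_E^{-1}h^{2r+2}\norm{\nablagh\eta_h}_{L^2(E)}^2\le Ch^{2r}\sum_K\bigl(\norm{\nablagh\eta_h}_K^2+h_K^2|\eta_h|_{H^2(K)}^2\bigr)\le Ch^{2r+2\alpha_k}\norm{\phi}_{H^4}^2 .
\]
Each of these is paired with $\norm{u}_{H^3}$ or $\norm{u}_{H^{k+1}}$, both bounded by $\norm{u}_{H^{s_k}}$. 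Collecting all terms gives \eqref{eq:dual-interpolation-consistency}. Notably, the $h^{r+1}\norm{f}$ load term does not appear, because the statement compares with $(f,\eta_h^l)_\Gamma$ rather than with $\ell_h$.
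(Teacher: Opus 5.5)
Your proposal is correct and follows essentially the paper's argument. It uses the same splitting via the exact broken identity and the lifting identity, the interpolation bounds $\enorm{\eta_h}\lesssim h^{\alpha_k}$ and $\norm{\nablagh\eta_h}_{L^2(\Gamma_h)}\lesssim h^{\alpha_k+1}$, the local Laplacian perturbation lemma, and the conormal comparison redone with the scaled trace inequality instead of inverse estimates. The only cosmetic difference is that you route the edge term through $\mean{\deltagh u^e}$ as in Lemma~\ref{lem:principal-comparison}, whereas the paper compares directly with $(\deltag u)^e$ and so never needs \eqref{eq:edge-laplacian-comparison}; both give the same $h^{r+\alpha_k}$ contribution.
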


\begin{proof}
Note that the constant used in the definition of $I_h^\#$ does not contribute to either side of \eqref{eq:dual-interpolation-consistency}, since $a_h(v,1)=0$ and $(f,1)_\Gamma=0$.  
Standard interpolation therefore yields 
\begin{align}
  &\enorm{\eta_h}
  \le
  Ch^{\alpha_k}\norm{\phi}_{H^4(\Gamma)},
  \label{eq:dual-eta-energy}
  \\
  &\norm{\nablagh\eta_h}_{L^2(\Gamma_h)}
  +
  \left(
    \sum_{K\in\Kh}
    h_K^2
    \norm{D_{\Gamma_h}^2\eta_h}_{L^2(K)}^2
  \right)^{1/2}
  \le
  Ch^{\alpha_k+1}\norm{\phi}_{H^4(\Gamma)}.
  \label{eq:dual-eta-scaled}
\end{align}
In particular, $ \norm{\tdeltagh\eta_h}_{L^2(\Gamma_h)}\le C\enorm{\eta_h}$.
Using the exact broken identity \eqref{eq:exact-broken} and the lifting identity \eqref{eq:projection-lifting-identity}, we expand   
\begin{align*}
  a_h(u^e,\eta_h)
  -(f,\eta_h^l)_\Gamma
  ={}&
  \bigl(
    \tdeltagh u^e-(\deltag u)^e,
    \tdeltagh\eta_h
  \bigr)_{\Gamma_h}
  +
  \beta\sum_{E\in\Eh}h_E^{-1}
  (J_E(v_h),J_E(w_h))_E
  \\
  &+
  \left[
    ((\deltag u)^e,\deltagh\eta_h)_{\Gamma_h}
    -
    \sum_{K^l\in\mathcal K_h^l}
    (\deltag u,\deltag\eta_h^l)_{K^l}
  \right]
  \\
  &+
  \left[
    \sum_{E^l\in\mathcal E_h^l}
    (\deltag u,J_{E^l}^{\,l}(\eta_h^l))_{E^l}
    -
    \sum_{E\in\Eh}
    \bigl(
      \mean{\Pi_{Q_h}(\deltag u)^e},
      J_E(\eta_h)
    \bigr)_E
  \right].
\end{align*}

The reconstructed-Laplacian comparison
\eqref{eq:reconstructed-exact-extension}, the smooth-jump estimate
\eqref{eq:smooth-geometric-jump-global}, and
\eqref{eq:dual-eta-energy} imply
\begin{align*}
  \left|
    \bigl(
      \tdeltagh u^e-(\deltag u)^e,
      \tdeltagh\eta_h
    \bigr)_{\Gamma_h}
  \right|
  +
  \left|
   \beta\sum_{E\in\Eh}h_E^{-1}
  (J_E(u^e),J_E(\eta_h))_E
  \right|
  \le
  Ch^{r+\alpha_k}
  \norm{u}_{H^2(\Gamma)}
  \norm{\phi}_{H^4(\Gamma)}.
\end{align*}

Then, changing variables from $K^l$ to $K$ and using the local operator comparison \eqref{eq:local-laplacian-perturbation} gives
\begin{align*}
  &\left|
    ((\deltag u)^e,\deltagh\eta_h)_{\Gamma_h}
    -
    \sum_{K^l\in\mathcal K_h^l}
    (\deltag u,\deltag\eta_h^l)_{K^l}
  \right|
  \\
  &\qquad\le
  C\norm{u}_{H^2(\Gamma)}
  \left(
    h^{r+1}\enorm{\eta_h}
    +
    h^r\norm{\nablagh\eta_h}_{L^2(\Gamma_h)}
    +
    h^{r+1}
    \left(
      \sum_{K\in\Kh}
      \norm{D_{\Gamma_h}^2\eta_h}_{L^2(K)}^2
    \right)^{1/2}
  \right)
  \\
  &\qquad\le
  Ch^{r+\alpha_k+1}
  \norm{u}_{H^2(\Gamma)}
  \norm{\phi}_{H^4(\Gamma)}.
\end{align*}

Moreover, after changing variables on the lifted edges, we have
\begin{align*} 
&\left[
    \sum_{E^l\in\mathcal E_h^l}
    (\deltag u,J_{E^l}^{\,l}(\eta_h^l))_{E^l}
    -
    \sum_{E\in\Eh}
    \bigl(
      \mean{\Pi_{Q_h}(\deltag u)^e},
      J_E(\eta_h)
    \bigr)_E
  \right] \\
  &=\sum_{E\in\Eh}
  \bigl(
    (\mu_{h,E}-1)(\deltag u)^e,
    (J_{E^l}^{\,l}(\eta_h^l))^e
  \bigr)_E
  +
  \sum_{E\in\Eh}
  \bigl(
    (\deltag u)^e,
    (J_{E^l}^{\,l}(\eta_h^l))^e-J_E(\eta_h)
  \bigr)_E
  \\
  &\quad+
  \sum_{E\in\Eh}
  \bigl(
    \mean{(I-\Pi_{Q_h})(\deltag u)^e},
    J_E(\eta_h)
  \bigr)_E.
\end{align*}
The first two sums are bounded by the edge-measure estimate \eqref{eq:mu-errors}, the conormal comparison arguments (which follow directly from the proof of \eqref{eq:jump-geometry-comparison}, but cannot be applied directly since $\eta_h\notin W_h$), the scaled trace inequality \eqref{eq:scaled-trace}, and \eqref{eq:dual-eta-energy}--\eqref{eq:dual-eta-scaled}.
They are of order
\[
  Ch^{r+\alpha_k+1}
  \norm{u}_{H^3(\Gamma)}
  \norm{\phi}_{H^4(\Gamma)}.
\]
For the projection term, \eqref{eq:projection-edge-estimate} and the weighted Cauchy-Schwarz give
\begin{align*}
  &\left|
    \sum_{E\in\Eh}
    \bigl(
      \mean{(I-\Pi_{Q_h})(\deltag u)^e},
      J_E(\eta_h)
    \bigr)_E
  \right|
  \le
  Ch^{k-1}
  \norm{u}_{H^{k+1}(\Gamma)}
  \enorm{\eta_h}
  \\
  &\qquad\le
  Ch^{k-1+\alpha_k}
  \norm{u}_{H^{k+1}(\Gamma)}
  \norm{\phi}_{H^4(\Gamma)}.
\end{align*}
Combining the preceding estimates proves \eqref{eq:dual-interpolation-consistency}.
\end{proof}

\begin{lemma}[Consistency estimates used in the duality argument]
\label{lem:L2-consistency}
Let $u\in H^{s_k}_{\#}(\Gamma)$ solve \eqref{eq:strong-problem} and let $u_h$ solve \eqref{eq:discrete-problem}. 
Define 
\begin{equation}\label{eq:energy-error-short}
  M_h(u)
  :=
  h^{k-1}\norm{u}_{H^{k+1}(\Gamma)}
  +
  h^r\norm{u}_{H^3(\Gamma)}
  +
  h^{r+1}\norm{u}_{H^4(\Gamma)}.
\end{equation}
Define $e_h:=u^e-u_h$.
If $\phi\in H^4(\Gamma)$ and
$\phi_h=I_h^\#\phi^e$, then
\begin{align}
  \left|
    (\deltag^2\phi,e_h^l)_\Gamma
    -
    a_h(e_h,\phi^e)
  \right|
  &\le
  C(h^{\alpha_k}+h^r)
  M_h(u)
  \norm{\phi}_{H^4(\Gamma)},
  \label{eq:dual-consistency-defect}
  \\
  \left|
    a_h(u^e,\phi_h)-\ell_h(\phi_h)
  \right|
  &\le
  C\left(
    h^{q_k}+h^{r+1}
  \right)
  \norm{u}_{H^{s_k}(\Gamma)}
  \norm{\phi}_{H^4(\Gamma)}.
  \label{eq:dual-tested-primal-consistency}
\end{align}
\end{lemma}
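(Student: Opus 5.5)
For \eqref{eq:dual-consistency-defect}, the plan is to apply the exact broken identity of Lemma~\ref{lem:exact-broken} with $\phi$ in the role of the solution and $\deltag^2\phi$ in the role of the load, tested against $w=e_h^l\in H_h^2(\Gamma)$. Note that $e_h^l=u-u_h^l$ is continuous and piecewise $H^2$ on $\mathcal K_h^l$. This gives
\[
  (\deltag^2\phi,e_h^l)_\Gamma
  =\sum_{K^l}(\deltag\phi,\deltag e_h^l)_{K^l}
  -\sum_{E^l}(\deltag\phi,J_{E^l}^{\,l}(e_h^l))_{E^l}.
\]
Since $u$ is smooth, $J_{E^l}^{\,l}(u)=0$, so only $u_h^l$ contributes to the jump term. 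On the discrete side, $a_h$ is symmetric, so $a_h(e_h,\phi^e)=a_h(\phi^e,e_h)$. I then expand, exactly as in the proof of Lemma~\ref{lem:exact-extension-consistency}:
\[
  a_h(\phi^e,e_h)
  =(\tdeltagh\phi^e-(\deltag\phi)^e,\tdeltagh e_h)
  +((\deltag\phi)^e,\tdeltagh e_h)
  +\beta\sum_E h_E^{-1}(J_E(\phi^e),J_E(e_h))_E .
\]
The first and third terms are bounded by $Ch^r\norm{\phi}_{H^2}\enorm{e_h}$, using \eqref{eq:reconstructed-exact-extension}, \eqref{eq:smooth-geometric-jump-global} and lifting stability. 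Theorem~\ref{thm:energy} gives $\enorm{e_h}\le CM_h(u)$, so these terms are acceptable.

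The middle term has to be compared with the broken identity. I split $e_h=(u^e-I_h^\#u^e)+(I_h^\#u^e-u_h)$. For the discrete part $\xi_h:=I_h^\#u^e-u_h\in W_h$, estimate \eqref{eq:reconstructed-principal-comparison} applies verbatim with $\phi$ in place of $u$. Since $\phi\in H^4$, the $h^{k-1}\norm{\phi}_{H^{k+1}}$ term must be replaced by $h^{\alpha_k}\norm{\phi}_{H^4}$. To get this, I rerun Lemma~\ref{lem:projection-residual}, in which the projection residual of $(\deltag\phi)^e$ onto degree-$(k-2)$ functions is $O(h^{\min\{k-1,2\}})$ in the weighted edge norm. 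This yields the bound $C(h^{\alpha_k}+h^r)\norm{\phi}_{H^4}\enorm{\xi_h}$, and $\enorm{\xi_h}\le CM_h(u)$.

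For the smooth part $\rho:=u^e-I_h^\#u^e$, which is not in $W_h$, I repeat the same term-by-term comparison, as was done for $\eta_h$ in Lemma~\ref{lem:dual-interpolation-consistency}. The pieces are the change of variables, the measure errors \eqref{eq:mu-errors}, the local Laplacian perturbation, the conormal comparisons, and the projection residual. Each piece is bounded by $(h^{\alpha_k}+h^r)\norm{\phi}_{H^4}\enorm{\rho}$ plus lower-order terms in $\norm{\nablagh\rho}$, which carry an extra power of $h$. Interpolation \eqref{eq:interpolation-energy} then gives the claimed bound.

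For \eqref{eq:dual-tested-primal-consistency}, I write $\phi_h=\phi^e+\eta_h$. The $\eta_h$ piece is handled by Lemma~\ref{lem:dual-interpolation-consistency} together with Lemma~\ref{lem:known-load-consistency}, which converts $(f,\eta_h^l)$ into $\ell_h(\eta_h)$ at cost $h^{r+1}\enorm{\eta_h}$. Here $h^{k-1+\alpha_k}=h^{q_k}$ and $h^{r+\alpha_k}\le h^{r+1}$. What remains is the smooth–smooth term $a_h(u^e,\phi^e)-\ell_h(\phi^e)$, which must be shown to be $O(h^{r+1}+h^{q_k})$ rather than $O(h^r)$.

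This smooth–smooth term is the main obstacle. The jump terms are harmless: by \eqref{eq:local-smooth-geometric-jump}, $J_E(u^e)$ and $J_E(\phi^e)$ are each $O(h^{r+1})$ pointwise. Hence both the penalty term and the lifting–lifting term are $O(h^{2r+1})$, and the cross terms $(\deltagh u^e,R_hJ(\phi^e))$ are $O(h^{r+1})$ after using the local estimate rather than the global $h^r$ one. The delicate piece is $(\deltagh u^e,\deltagh\phi^e)_{\Gamma_h}-(f^e,\phi^e)$. Here I would use that $\ell_h(\phi^e)=(\deltag^2 u,\phi)_\Gamma+O(h^{r+1})$, so that $(\deltag u,\deltag\phi)_\Gamma$ equals $(\deltag^2u,\phi)_\Gamma$. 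It then suffices to show $|(\deltagh u^e,\deltagh\phi^e)_{\Gamma_h}-(\deltag u,\deltag\phi)_\Gamma|\le Ch^{r+1}$.

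The $O(h^r)$ part of $\deltagh z^e-(\deltag z)^e$ comes from the normal mismatch $\vn^e-\vn_h$ acting on first derivatives. My first attempt would be to express $\deltagh z^e=\mu_h^{-1}\di_{\Gamma_h}(\mu_h A_h\nablagh\ldots)$-type forms and integrate by parts elementwise, moving that first-order perturbation onto the smooth partner, where it pays with the distance estimate $O(h^{r+1})$ from \eqref{eq:PhBh-est}. The boundary terms would be controlled by the tangential conormal sum \eqref{eq:tangential-conormal-sum}. If this fails, I would fall back on the appendix-style local identity $\deltagh z^e-(\deltag z)^e=(\text{tangential divergence of an }O(h^r)\text{ field})+O(h^{r+1})$ and use superconvergent averaging of that divergence over elements.
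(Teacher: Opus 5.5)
Your treatment of \eqref{eq:dual-consistency-defect} and your splitting of \eqref{eq:dual-tested-primal-consistency} agree with the paper. However, the step you yourself single out as the crux, $a_h(u^e,\phi^e)-a(u,\phi)=\mathcal O(h^{r+1})$ (the paper's Lemma~\ref{lem:smooth-smooth-consistency}), is not established. You claim the cross terms $(\deltagh u^e,R_hJ(\phi^e))_{\Gammah}=\sum_{E\in\Eh}(\mean{\Pi_{Q_h}\deltagh u^e},J_E(\phi^e))_E$ are $\mathcal O(h^{r+1})$ by the local estimate \eqref{eq:local-smooth-geometric-jump}. This is a counting error. The total edge length is $\simeq h^{-1}$, so for smooth $g$ the sum $\sum_{E}\norm{g}_{L^2(E)}^2$ is of size $h^{-1}$, and a pointwise $h^{r+1}$ bound on $J_E(\phi^e)$ yields only $\mathcal O(h^{r})$ after summation. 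The local estimate carries no more information than \eqref{eq:smooth-geometric-jump-global}. (The same slip makes the penalty and lifting--lifting terms $\mathcal O(h^{2r})$, not $\mathcal O(h^{2r+1})$; there it is harmless.) Your reduction to $|(\deltagh u^e,\deltagh\phi^e)_{\Gammah}-(\deltag u,\deltag\phi)_\Gamma|\le Ch^{r+1}$ is therefore unjustified. The elementwise integration by parts you propose for it is also circular. Since $q_\phi:=(\deltag\phi)^e$ is continuous, one gets exactly $(q_\phi,\deltagh u^e)_{\Gammah}=-(\nablagh q_\phi,\nablagh u^e)_{\Gammah}+\sum_{E}(q_\phi,J_E(u^e))_E$. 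The boundary terms are precisely the edge sums you discarded, and \eqref{eq:tangential-conormal-sum} again bounds them only by $h^r$. For $r=1$ no edge-by-edge absolute bound can do better: $P^e(\vnu_E^++\vnu_E^-)$ is generically of exact size $h^2$ (for instance on the sphere, whenever the two flat triangles sharing $E$ have different circumradii).

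The missing idea is to keep these two $\mathcal O(h^r)$ contributions together so that they cancel. Set $q_z:=(\deltag z)^e$ and $\varepsilon_z:=\deltagh z^e-q_z$.

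\emph{Step 1.} Replace $\Pi_{Q_h}\deltagh z^e$ by $q_z$ in both cross terms. The remainders $\Pi_{Q_h}\varepsilon_z$ and $(\Pi_{Q_h}-I)q_z$ are $\mathcal O(h^r)$ and $\mathcal O(h^{\alpha_k})$ in the $h_E$-weighted edge norm, so against $J_E(\phi^e)$ they cost $\mathcal O(h^{2r}+h^{r+\alpha_k})$.

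\emph{Step 2.} In the remaining combination $(\deltagh z^e,\deltagh\phi^e)_{\Gammah}-\sum_E(q_z,J_E(\phi^e))_E-\sum_E(q_\phi,J_E(z^e))_E-(\deltag z,\deltag\phi)_\Gamma$, substitute the exact identity above for both edge sums. The products $(q_z,\varepsilon_\phi)_{\Gammah}$ and $(q_\phi,\varepsilon_z)_{\Gammah}$, each only $\mathcal O(h^r)$, cancel identically.

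\emph{Step 3.} What is left consists of three pieces: $(\varepsilon_z,\varepsilon_\phi)_{\Gammah}=\mathcal O(h^{2r})$; the gradient defects $\int_{\Gammah}(\nablag\deltag z)^e\cdot(B_hB_h^T-\mu_hP^e)(\nablag\phi)^e\,ds_h=\mathcal O(h^{r+1})$, by \eqref{eq:PhBh-est} and \eqref{eq:mu-errors} after using $(\nablag\deltag z,\nablag\phi)_\Gamma=-(\deltag z,\deltag\phi)_\Gamma$; and the mass defect $((1-\mu_h)q_z,q_\phi)_{\Gammah}=\mathcal O(h^{r+1})$.

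For $r\ge2$ your separate route could be rescued differently. With a suitably oriented unit edge tangent $\mathbf t_E$ one has $\vnu_E^++\vnu_E^-=\mathbf t_E\times(\vn_h^+-\vn_h^-)$. Since $|\vn^e\cdot\mathbf t_E|\le Ch^r$ and $|\vn^e\cdot(\vn_h^+-\vn_h^-)|\le Ch^{2r}$, this gives $|P^e(\vnu_E^++\vnu_E^-)|\le Ch^{2r}$, hence cross terms of order $h^{2r-1}\le h^{r+1}$. For $r=1$ this gains nothing, and the cancellation is indispensable.
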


\begin{proof}
We only sketch the proof here, since the technical details are quite similar to those in the lemmas in Sections~\ref{sec:auxiliary-est} and~\ref{sec:proj-consistency} and Lemma \ref{lem:dual-interpolation-consistency}; we emphasize only the key points.
 
Splitting $e_h=(u^e-I_h^\#u^e)+(I_h^\#u^e-u_h)$ and using the interpolation estimate \eqref{eq:interpolation-energy}, the energy estimate $\enorm{I_h^\#u^e-u_h}\le CM_h(u)$ (by Theorem~\ref{thm:consistency-error} and coercivity of $a_h$ in Lemma~\ref{lem:stability}), and the inverse inequality to $D_{\Gamma_h}^2(I_h^\#u^e-u_h)$ lead to  
\begin{equation}\label{eq:L2-error-auxiliary-control}
   \mathfrak X_h(e_h):=\enorm{e_h}
  +
  \left(
    \sum_{K\in\Kh}
    h_K^2
    \norm{D_{\Gamma_h}^2e_h}_{L^2(K)}^2
  \right)^{1/2}
  \le
  CM_h(u).
\end{equation}

For \eqref{eq:dual-consistency-defect}, use the exact broken identity
\[
  (\deltag^2\phi,e_h^l)_\Gamma
  =
  \sum_{K^l\in\mathcal K_h^l}
  (\deltag e_h^l,\deltag\phi)_{K^l}
  -
  \sum_{E^l\in\mathcal E_h^l}
  (J_{E^l}^{\,l}(e_h^l),\deltag\phi)_{E^l}
\]
and compare it term by term with the expanded form
\eqref{eq:expanded-form} of $a_h(e_h,\phi^e)$.
The element-operator, conormal, surface-measure, and edge-measure
comparisons are bounded by
\[
  Ch^r\mathfrak X_h(e_h)\norm{\phi}_{H^4(\Gamma)}.
\]
The difference between $\deltag\phi$ and its $Q_h$-projection
contributes
\[
  Ch^{\alpha_k}
  \mathfrak X_h(e_h)
  \norm{\phi}_{H^4(\Gamma)},
\]
as $\deltag\phi$ possesses at most two derivatives under the assumed $H^4$-regularity of $\phi$ and the $Q_h$-projection estimate gives an error $\mathcal{O}(h^{\alpha_k})$.   
The terms containing $J_E(\phi^e)$, including the lifting-lifting and penalty terms, are $\mathcal{O}(h^r)\mathfrak X_h(e_h)$ by \eqref{eq:smooth-geometric-jump-global} and lifting stability. 
Together with \eqref{eq:L2-error-auxiliary-control}, this proves \eqref{eq:dual-consistency-defect}.

For \eqref{eq:dual-tested-primal-consistency}, set $\eta_h:=\phi_h-\phi^e$ and write
\begin{align*}
  a_h(u^e,\phi_h)-\ell_h(\phi_h)
  ={}&
  \bigl[
    a_h(u^e,\eta_h)-(f,\eta_h^l)_\Gamma
  \bigr]
  +
  \bigl[
    a_h(u^e,\phi^e)-a(u,\phi)
  \bigr]
  +
  \bigl[
    (f,\phi_h^l)_\Gamma-\ell_h(\phi_h)
  \bigr].
\end{align*}
The first term is bounded by $ C\left(h^{k-1+\alpha_k}+h^{r+\alpha_k}\right)\norm{u}_{H^{s_k}(\Gamma)}\norm{\phi}_{H^4(\Gamma)}$, due to \eqref{eq:dual-interpolation-consistency}.
The second term is bounded by $\mathcal{O}(h^{r+1})$ using Lemma~\ref{lem:smooth-smooth-consistency} (smooth-smooth consistency estimate).
The last term is also $\mathcal{O}(h^{r+1})$ by the load consistency estimate in Lemma~\ref{lem:known-load-consistency}. Since
\[
  k-1+\alpha_k=q_k,
  \qquad
  r+\alpha_k\ge r+1,
\]
estimate \eqref{eq:dual-tested-primal-consistency} follows.
\end{proof}

We emphasize that the estimate \eqref{eq:smooth-smooth-consistency} proved in Lemma~\ref{lem:smooth-smooth-consistency} is crucial to retain the order $\mathcal{O}(h^{r+1})$ in the estimate \eqref{eq:dual-tested-primal-consistency}. A previous comparison in Section~\ref{sec:proj-consistency} between $a_h$ and $a$ yields only an $\mathcal{O}(h^{r})$ error, since only discrete test functions were used; see, for instance, \eqref{eq:direct-element-comparison}. The use of $z,\phi \in H^4(\Gamma)$ as arguments in \eqref{eq:smooth-smooth-consistency} leads to the increased order.

Now we consider a dual problem as follows. 
Recall that $\Gamma\in C^{r+2}\cap C^{s_k,1}$ is a connected, compact surface without boundary. 
For every $\psi\in L^2(\Gamma)$ with $(\psi,1)_\Gamma=0$, there exists $\phi\in H^4(\Gamma)$ that solves 
\begin{equation}\label{eq:L2-dual-problem}
  \deltag^2\phi=\psi,
  \qquad
  (\phi,1)_\Gamma=0,
\end{equation}
and satisfies 
\begin{equation}\label{eq:L2-dual-regularity}
  \norm{\phi}_{H^4(\Gamma)}
  \le
  C\norm{\psi}_{L^2(\Gamma)}.
\end{equation}

\begin{theorem}[$L^2$-error estimate]
\label{thm:L2-error}
Let $u\in H^{s_k}(\Gamma)$ solve \eqref{eq:strong-problem}, and let $u_h\in W_{h,\#}$ solve \eqref{eq:discrete-problem}. 
Then
\begin{equation}\label{eq:L2-error-lifted-data}
  \norm{u-u_h^l}_{L^2(\Gamma)/\mathbb R}
  \le
  C\left(
    h^{q_k}+h^{r+1}
  \right)
  \norm{f}_{H^{s_k-4}(\Gamma)}.
\end{equation}
\end{theorem}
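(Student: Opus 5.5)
The plan is a standard Aubin--Nitsche duality argument built on Lemma~\ref{lem:L2-consistency}. Set $e_h:=u^e-u_h$ and let $\bar e:=|\Gamma|^{-1}(e_h^l,1)_\Gamma$. Take $\psi:=e_h^l-\bar e$, which has zero mean, and let $\phi\in H^4(\Gamma)$ solve the dual problem \eqref{eq:L2-dual-problem} with the regularity bound \eqref{eq:L2-dual-regularity}. Then
\[
\norm{u-u_h^l}_{L^2(\Gamma)/\mathbb R}^2=(\psi,e_h^l)_\Gamma=(\deltag^2\phi,e_h^l)_\Gamma .
\]
The second equality uses $(\deltag^2\phi,1)_\Gamma=0$, so the constant $\bar e$ drops out.

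The key steps, in order, are as follows.

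\emph{Step 1 (pass to the discrete form).} By \eqref{eq:dual-consistency-defect},
\[
(\deltag^2\phi,e_h^l)_\Gamma=a_h(e_h,\phi^e)+\mathcal O\bigl((h^{\alpha_k}+h^r)M_h(u)\norm{\phi}_{H^4}\bigr).
\]
Since $M_h(u)\le C(h^{k-1}+h^r)\norm{u}_{H^{s_k}}$, this defect is bounded by $C(h^{k-1}+h^r)(h^{\alpha_k}+h^r)$. That quantity is at most $C(h^{q_k}+h^{r+1})$, because $h^{k-1+r}\le h^{r+1}$ for $k\ge2$, $h^{r+\alpha_k}\le h^{r+1}$, and $h^{2r}\le h^{r+1}$.

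\emph{Step 2 (Galerkin orthogonality with the dual interpolant).} Let $\phi_h:=I_h^\#\phi^e\in W_{h,\#}$ and split
\[
a_h(e_h,\phi^e)=a_h(e_h,\phi^e-\phi_h)+a_h(u^e,\phi_h)-a_h(u_h,\phi_h).
\]
Because $u_h$ solves \eqref{eq:discrete-problem} and $\phi_h\in W_{h,\#}$, we have $a_h(u_h,\phi_h)=\ell_h(\phi_h)$. The middle pair therefore equals $a_h(u^e,\phi_h)-\ell_h(\phi_h)$, which is bounded by $C(h^{q_k}+h^{r+1})\norm{u}_{H^{s_k}}\norm{\phi}_{H^4}$ by \eqref{eq:dual-tested-primal-consistency}.

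\emph{Step 3 (the remaining cross term).} Use the continuity of the extended $a_h$ on $H_h^2(\Gamma_h)\times H_h^2(\Gamma_h)$:
\[
|a_h(e_h,\phi^e-\phi_h)|\le C_\beta\enorm{e_h}\,\enorm{\phi^e-\phi_h}.
\]
Theorem~\ref{thm:energy} gives $\enorm{e_h}\le C(h^{k-1}+h^r)\norm{u}_{H^{s_k}}$. Estimate \eqref{eq:dual-eta-energy} gives $\enorm{\phi^e-\phi_h}\le Ch^{\alpha_k}\norm{\phi}_{H^4}$. The product is again $\le C(h^{q_k}+h^{r+1})$.

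\emph{Step 4 (conclude).} Collecting Steps 1--3 gives
\[
\norm{u-u_h^l}^2_{L^2/\mathbb R}\le C(h^{q_k}+h^{r+1})\norm{u}_{H^{s_k}}\norm{\phi}_{H^4}.
\]
Apply $\norm{\phi}_{H^4}\le C\norm{\psi}_{L^2}=C\norm{u-u_h^l}_{L^2/\mathbb R}$ and divide by this quantity. Finally, \eqref{eq:higher-regularity} converts $\norm{u}_{H^{s_k}}$ into $\norm{f}_{H^{s_k-4}}$, which gives \eqref{eq:L2-error-lifted-data}.

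The main obstacle is bookkeeping of the mean values rather than any single estimate. The discrete solution has zero mean on $\Gamma_h$, not on $\Gamma$, so the mean-zero normalization must be handled in the quotient norm. I would check that every identity used is insensitive to constants: $a_h(\cdot,1)=0$, $(f,1)_\Gamma=0$, and $\enorm{\cdot}$ ignores constants, as noted in Remark~\ref{rem:regularity-mean-zero}. The heavy analytical work, namely the $h^{r+1}$ smooth-smooth consistency, is already encapsulated in Lemma~\ref{lem:L2-consistency}, so only the exponent arithmetic in Steps 1 and 3 needs care.
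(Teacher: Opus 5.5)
Your proposal is correct and follows the paper's proof essentially verbatim. It uses the same mean-corrected dual datum $\psi$ and the same three-term splitting of $(\deltag^2\phi,e_h^l)_\Gamma$ with $\phi_h=I_h^\#\phi^e$ and Galerkin orthogonality; the cross term is bounded via continuity, Theorem~\ref{thm:energy} and the $h^{\alpha_k}$ interpolation estimate, the other two terms via Lemma~\ref{lem:L2-consistency}, and your exponent bookkeeping ($k-1+r$, $r+\alpha_k$, $2r$ all at least $r+1$) is exactly what reduces the product terms to $h^{q_k}+h^{r+1}$.
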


\begin{proof}
Set $e_h:=u^e-u_h$ and $\psi:=e_h^l-|\Gamma|^{-1}(e_h^l,1)_{\Gamma}$, and let $\phi$ solve \eqref{eq:L2-dual-problem}. 
Then
\[
  \norm{\psi}_{L^2(\Gamma)}^2
  =
  (\deltag^2\phi,e_h^l)_\Gamma.
\]
Let $\phi_h=I_h^\#\phi^e$. Adding and subtracting the discrete form and
using the discrete problem give the exact decomposition
\begin{align}
  \norm{\psi}_{L^2(\Gamma)}^2
  ={}&
  a_h(e_h,\phi^e-\phi_h)
 +
  \left[
    a_h(u^e,\phi_h)-\ell_h(\phi_h)
  \right]
  +
  \left[
    (\deltag^2\phi,e_h^l)_\Gamma
    -
    a_h(e_h,\phi^e)
  \right].
  \label{eq:L2-master-identity}
\end{align}
By continuity of $a_h$, Theorem~\ref{thm:energy}, and the interpolation estimate $\enorm{\phi^e-\phi_h}\le Ch^{\alpha_k}\norm{\phi}_{H^4(\Gamma)}$,
\[
  |a_h(e_h,\phi^e-\phi_h)|
  \le
  Ch^{\alpha_k}
  M_h(u)
  \norm{\phi}_{H^4(\Gamma)}.
\]
The remaining two terms are bounded by
Lemma~\ref{lem:L2-consistency}. Therefore,
\[
  \norm{\psi}_{L^2(\Gamma)}^2
  \le
  C\left(
    h^{q_k}+h^{r+1}
  \right)
  \norm{u}_{H^{s_k}(\Gamma)}
  \norm{\phi}_{H^4(\Gamma)}.
\]
Using \eqref{eq:L2-dual-regularity} and the regularity estimate \eqref{eq:higher-regularity} proves \eqref{eq:L2-error-lifted-data}.
\end{proof}

\begin{corollary}[$L^2$ error on the computational surface]
\label{cor:L2-error-computational-surface}
Under the assumptions of Theorem~\ref{thm:L2-error},
\begin{equation}\label{eq:L2-error-gammah}
  \norm{u^e-u_h}_{L^2(\Gamma_h)}
  \le
  C\left(
    h^{q_k}+h^{r+1}
  \right)
  \norm{f}_{H^{s_k-4}(\Gamma)}.
\end{equation}
\end{corollary}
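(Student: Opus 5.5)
The plan is to pass from the quotient norm on $\Gamma$ in Theorem~\ref{thm:L2-error} to the full $L^2$ norm on $\Gamma_h$, exploiting that $u$ and $u_h$ both carry mean-zero constraints, one on $\Gamma$ and the other on $\Gamma_h$. Set $e_h:=u^e-u_h$ and $\bar e:=|\Gamma|^{-1}(e_h^l,1)_\Gamma$. The first step is the norm equivalence \eqref{eq:extension-norm-equivalence}:
\[
\norm{e_h}_{L^2(\Gamma_h)}\le C\norm{e_h^l}_{L^2(\Gamma)}
\le C\bigl(\norm{e_h^l-\bar e}_{L^2(\Gamma)}+|\bar e|\,|\Gamma|^{1/2}\bigr).
\]
Since $(e_h^l)=u-u_h^l$, the first term is exactly the left side of \eqref{eq:L2-error-lifted-data}. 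It remains to bound the constant $\bar e$.

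To control $\bar e$, I would use $(u,1)_\Gamma=0$ to write $\bar e=-|\Gamma|^{-1}(u_h^l,1)_\Gamma$. Changing variables gives $(u_h^l,1)_\Gamma=(\mu_h u_h,1)_{\Gamma_h}$, and because $u_h\in W_{h,\#}$,
\[
(u_h^l,1)_\Gamma=((\mu_h-1)u_h,1)_{\Gamma_h}.
\]
By \eqref{eq:mu-errors} this is bounded by $Ch^{r+1}\norm{u_h}_{L^2(\Gamma_h)}$. I then need $\norm{u_h}_{L^2(\Gamma_h)}\le C\norm{f}_{H^{s_k-4}}$ uniformly in $h$. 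This follows from \eqref{eq:lower-order-control}: $u_h$ has zero mean on $\Gamma_h$, so the Poincar\'e inequality \eqref{eq:uniform-poincare-gammah} gives $\norm{u_h}_{L^2(\Gamma_h)}\le C\enorm{u_h}$. Combined with $\enorm{u_h}\le\enorm{u^e}+\enorm{e_h}$, with Theorem~\ref{thm:energy}, and with $\enorm{u^e}\le C\norm{u}_{H^2}$ (using \eqref{eq:laplacian-comparison-smooth} and \eqref{eq:smooth-geometric-jump-global}), this yields the uniform bound. Hence $|\bar e|\le Ch^{r+1}\norm{f}_{H^{s_k-4}(\Gamma)}$, using \eqref{eq:higher-regularity}.

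Adding the two contributions gives \eqref{eq:L2-error-gammah}. The only step needing care is the mean term. It must produce $h^{r+1}$ and not just $h^r$, and that depends on using the surface measure quotient estimate together with the exact discrete mean-zero constraint, and not a normal or conormal estimate. Everything else is a direct application of the earlier norm equivalences and estimates.
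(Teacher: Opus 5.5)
Your proposal is correct and follows the paper's proof: you rewrite the mean of $u-u_h^l$ as $-((\mu_h-1)u_h,1)_{\Gamma_h}$ using both mean-zero constraints, bound it by \eqref{eq:mu-errors}, and combine this with Theorem~\ref{thm:L2-error} and the norm equivalence. Your explicit uniform bound $\norm{u_h}_{L^2(\Gamma_h)}\le C\norm{f}_{H^{s_k-4}(\Gamma)}$ (via Poincar\'e, the energy estimate, and $\enorm{u^e}\le C\norm{u}_{H^2(\Gamma)}$) supplies a step the paper leaves implicit; applying coercivity directly to $a_h(u_h,u_h)=\ell_h(u_h)$ would give the same bound slightly more quickly.
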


\begin{proof}
Since $(u,1)_\Gamma=0$ and $(u_h,1)_{\Gamma_h}=0$, changing variables gives
\[
  (u-u_h^l,1)_\Gamma
  =
  -((\mu_h-1)u_h,1)_{\Gamma_h}.
\]
Hence, by \eqref{eq:mu-errors}, $\left||\Gamma|^{-1}(u-u_h^l,1)_\Gamma\right|$ is of order $\mathcal{O}(h^{r+1})$.
Combining it with Theorem~\ref{thm:L2-error} controls the full $L^2(\Gamma)$ norm of $u-u_h^l$. The norm equivalence in Lemma~\ref{lem:norm-trace} then gives \eqref{eq:L2-error-gammah}.
\end{proof}

%=======================================================================
\section{Numerical experiments}
%=======================================================================

\subsection{Assembly on a parametric surface}
\label{sec:implementation}

We describe the algebraic assembly of \eqref{eq:discrete-problem}.  Let
$\{\phi_i\}$ and $\{\psi_a\}$ be bases of $W_h$ and $Q_h$, respectively, and
define
\begin{align}
  (K_\Delta)_{ij}
  &:=(\deltagh\phi_i,\deltagh\phi_j)_{\Gammah},
  \label{eq:implementation-K}
  \\
  D_{ai}
  &:=(\psi_a,\deltagh\phi_i)_{\Gammah},
  \label{eq:implementation-D}
  \\
  L_{ai}
  &:=\sum_{E\in\Eh}(\mean{\psi_a},J_E(\phi_i))_E,
  \label{eq:implementation-L}
  \\
  S_{ij}
  &:=\sum_{E\in\Eh}h_E^{-1}
  (J_E(\phi_i),J_E(\phi_j))_E.
  \label{eq:implementation-S}
\end{align}
Let $M_Q$ be the mass matrix of $Q_h$.  By \eqref{eq:lifting}, the coefficient
vector of $R_hJ(v_h)$ is $M_Q^{-1}L\mathbf v$.  Expanding the square in
\eqref{eq:discrete-form} gives the stiffness matrix
\begin{equation}\label{eq:implementation-stiffness-curved}
  A_h
  =K_\Delta
   -D^TM_Q^{-1}L
   -L^TM_Q^{-1}D
   +L^TM_Q^{-1}L
   +\beta S.
\end{equation}
The inverse of $M_Q$ is local because $Q_h$ is discontinuous. Indeed, if $\mathbf r_v=M_Q^{-1}L\mathbf v$ denotes the coefficient
vector of $R_hJ(v_h)$, then
\[
  \bigl(R_hJ(u_h),R_hJ(v_h)\bigr)_{\Gamma_h}
  =\mathbf r_u^T M_Q\mathbf r_v
  =\mathbf u^T L^TM_Q^{-1}L\mathbf v,
\]
where we have used the symmetry of $M_Q$.  

Formula \eqref{eq:implementation-stiffness-curved} is valid for every geometry
degree $r$ and every solution degree $k\ge2$.  On an affine mesh ($r=1$),
$\deltagh W_h\subset Q_h$ and
$K_\Delta=D^TM_Q^{-1}D$, so that
\begin{equation}\label{eq:implementation-stiffness-flat}
  A_h=(D-L)^TM_Q^{-1}(D-L)+\beta S.
\end{equation}
However, on a curved mesh (with general $r$), replacing $K_\Delta$ by $D^TM_Q^{-1}D$ would ignore the component $(I-\Pi_{Q_h})\deltagh v_h$. 

The two conormals on every curved interior edge must be computed from their respective element parametrizations and inserted in \eqref{eq:JE}.  All edge expressions are invariant under swapping the labels $+$ and $-$.  Since the metric coefficients, conormals, and element Laplacians are generally nonpolynomial, sufficiently accurate element and edge quadrature is required.
We use NGSolve \cite{schoberl2014c++} to implement our method. 
When $r \ge 2$, the computational mesh is generated using the command \texttt{mesh.Curve(r)} in NGSolve. For refinement, we first uniformly refine the affine mesh $\gamma_h$ and then apply \texttt{mesh.Curve(r)} to generate the corresponding curved refined mesh.

The matrix has constants in its kernel.  One may impose the discrete mean by
a Lagrange multiplier, work on the quotient space, or fix one algebraic degree
of freedom and subtract the discrete mean afterward.  

\subsection{Convergence rate}
\label{sec:numerical-experiment}

We validate the method using the sphere benchmark in \cite[Section~6]{larsson2017continuous}.  Let $\Gamma=\mathbb S^2$ and set
\begin{equation}\label{eq:sphere-exact-solution}
  u(x,y,z)=3x^2y-y^3,
  \qquad (x,y,z)\in\mathbb S^2.
\end{equation}
This is a spherical harmonic of degree three, and hence
\begin{equation}\label{eq:sphere-eigenvalue}
  -\deltag u=12u,
  \qquad
  f=\deltag^2u=144u.
\end{equation}
The closest-point extension used to evaluate errors on $\Gammah$ is
\begin{equation}\label{eq:sphere-extension}
  u^e(x,y,z)
  =\frac{3x^2y-y^3}{(x^2+y^2+z^2)^{3/2}}.
\end{equation} 

We report the $L^2$-error 
\begin{equation}\label{eq:numerical-L2-error}
  E_{0,h}:=\norm{u^e-u_h}_{L^2(\Gammah)},
\end{equation}
and the energy error
\begin{equation}\label{eq:numerical-energy-error}
  E_{2,h}:=(\norm{\deltagh(u^e-u_h)}_{L^2(\Gammah)}^2+\sum_{E\in\Eh}h_E^{-1}
  \norm{J_E(u^e-u_h)}_{L^2(E)}^2)^{1/2}.
\end{equation}
We also measure
\begin{equation}\label{eq:numerical-reconstructed-error}
  E_{R,h}
  :=\norm{\tdeltagh u_h-(\deltag u)^e}_{L^2(\Gammah)}.
\end{equation}

Table~\ref{tab:sphere-convergence} gives the errors and observed rates.  
The rates are calculated by
\begin{equation*}
  \operatorname{rate}
  =\frac{\log(E_{h_{j-1}}/E_{h_j})}
  {\log(h_{j-1}/h_j)}.
\end{equation*}

\begin{table}[htbp]
\centering
\small
\caption{Convergence for the reconstructed-Laplacian CDG method on the unit sphere with $k=2$, $r=1$, $\ell=0$, and $\beta=1$.}
\label{tab:sphere-convergence}
\begin{tabular}{r|r|r|c|cc|cc|cc}
\hline
level & $N_T$ & dofs & $h$
& $E_{0,h}$ & rate
& $E_{2,h}$ & rate
& $E_{R,h}$ & rate\\
\hline
0 &    108 &    218 & $8.554\mathrm e{-1}$
  & $3.849\mathrm e{-1}$ & -
  & $1.891\mathrm e{1}$  & -
  & $7.536\mathrm e{0}$  & -\\
1 &    432 &    866 & $4.522\mathrm e{-1}$
  & $1.392\mathrm e{-1}$ & 1.60
  & $6.738\mathrm e{0}$  & 1.62
  & $3.862\mathrm e{0}$  & 1.05\\
2 &   1728 &   3458 & $2.296\mathrm e{-1}$
  & $3.887\mathrm e{-2}$ & 1.88
  & $2.718\mathrm e{0}$  & 1.34
  & $1.952\mathrm e{0}$  & 1.01\\
3 &   6912 &  13826 & $1.153\mathrm e{-1}$
  & $1.007\mathrm e{-2}$ & 1.96
  & $1.234\mathrm e{0}$  & 1.15
  & $9.804\mathrm e{-1}$ & 1.00\\
4 &  27648 &  55298 & $5.768\mathrm e{-2}$
  & $2.549\mathrm e{-3}$ & 1.99
  & $5.954\mathrm e{-1}$ & 1.05
  & $4.912\mathrm e{-1}$ & 1.00\\
\hline
\end{tabular}
\end{table}

\begin{table}[htbp]
\centering
\small
\caption{Convergence for the reconstructed-Laplacian CDG method on the unit sphere with $k=3$, $r=2$, $\ell=1$, and $\beta=1$.}
\label{tab:sphere-convergence-2}
\begin{tabular}{r|r|r|c|cc|cc|cc}
\hline
level & $N_T$ & dofs & $h$
& $E_{0,h}$ & rate
& $E_{2,h}$ & rate
& $E_{R,h}$ & rate\\
\hline
0 &    108 &    488 & $8.554\mathrm e{-1}$
  & $3.361\mathrm e{-2}$ & -
  & $5.416\mathrm e{0}$  & -
  & $1.424\mathrm e{0}$  & -\\
1 &    432 &   1946 & $4.522\mathrm e{-1}$
  & $3.244\mathrm e{-3}$ & 3.67
  & $1.489\mathrm e{0}$  & 2.03
  & $3.809\mathrm e{-1}$ & 2.07\\
2 &   1728 &   7778 & $2.296\mathrm e{-1}$
  & $2.410\mathrm e{-4}$ & 3.84
  & $3.777\mathrm e{-1}$ & 2.02
  & $9.677\mathrm e{-2}$ & 2.02\\
3 &   6912 &  31106 & $1.153\mathrm e{-1}$
  & $1.643\mathrm e{-5}$ & 3.90
  & $9.500\mathrm e{-2}$ & 2.00
  & $2.426\mathrm e{-2}$ & 2.01\\
4 &  27648 & 124418 & $5.768\mathrm e{-2}$
  & $1.071\mathrm e{-6}$ & 3.95
  & $2.380\mathrm e{-2}$ & 2.00
  & $6.067\mathrm e{-3}$ & 2.00\\
\hline
\end{tabular}
\end{table}

The observed energy error and reconstructed-Laplacian rates agree with Theorem~\ref{thm:energy} and Corollary~\ref{cor:reconstructed-laplacian}, respectively, for $k=2$, $r=1$ in Table \ref{tab:sphere-convergence} and $k=3$, $r=2$ in Table~\ref{tab:sphere-convergence-2}. For the $L^2$ error, we compare the data with theoretical result in Corollary~\ref{cor:L2-error-computational-surface}. Note that when $k=2$ and $r=1$, $q_k=r+1=2$, and the $L^2$-rate in Table \ref{tab:sphere-convergence} matches it. When $k=3$ and $r=2$, we have $q_k=4$ and $r+1=3$, but we do observe an $\mathcal{O}(h^4)$ convergence in $L^2$-norm in Table~\ref{tab:sphere-convergence-2}. This is due to a superconvergence in geometric quantities as shown in Table~\ref{tab:sphere-geometric-errors} when $r=2$. The reason could be the symmetry in unit sphere and the meshes; we refer to \cite{hardering2026odd} for a detailed discussion on such superconvergence.      
The same one-order superconvergence of the geometric approximation also explains the approximately $\mathcal{O}(h^3)$ energy convergence observed for $k=4$ and $r=2$ in Table~\ref{tab:sphere-convergence-3}; for a general second-order geometric approximation, Theorem~\ref{thm:energy} guarantees only $\mathcal{O}(h^2)$ energy convergence.

Table~\ref{tab:sphere-geometric-errors} provides numerical evidence for selected consequences of Assumption~\ref{lem:parametric-geometry} on the tested quadratic mesh family.  
At each refinement level, the distance, normal, and surface and edge measure defects are evaluated at sufficiently dense sampling points on the actual curved elements and edges, using the exact sphere geometry and the computational element transformations.  
The largest sampled values are reported as approximations of the corresponding $L^\infty$-norms. 
The observed rates support the predicted geometric orders for this mesh sequence.

Moreover, we emphasize that the reconstructed-Laplacian CDG formulation requires no tuning of the stabilization parameter~$\beta$. Taking $\beta = 1$, the results in Table~\ref{tab:sphere-convergence} exhibit the expected stability and convergence, whereas in the same experiment the IPDG method of~\cite{larsson2017continuous} shows numerical instability with the same value of~$\beta$.

\begin{table}[htbp]
\centering
\small
\caption{Geometric approximation errors in ${L^\infty(\Gamma_h)}$ or $L^\infty(\mathcal E_h)$ for geometric quantities in Assumption \ref{lem:parametric-geometry} with $r=2$.}
\label{tab:sphere-geometric-errors}
\begin{tabular}{r|c|cc|cc|cc|cc}
\hline
level & $h$
& $d$ & rate
& $\vn^e-\vn_h$ & rate
& $1-\mu_h$ & rate
& $1-\mu_{h,E}$ & rate\\
\hline
0 & $8.55\mathrm e{-1}$
  & $4.468\mathrm e{-3}$ & -
  & $3.987\mathrm e{-2}$ & -
  & $8.987\mathrm e{-3}$ & -
  & $2.438\mathrm e{-3}$ & -\\
1 & $4.52\mathrm e{-1}$
  & $3.225\mathrm e{-4}$ & 4.12
  & $5.882\mathrm e{-3}$ & 3.00
  & $6.450\mathrm e{-4}$ & 4.13
  & $1.722\mathrm e{-4}$ & 4.16\\
2 & $2.30\mathrm e{-1}$
  & $2.096\mathrm e{-5}$ & 4.03
  & $7.705\mathrm e{-4}$ & 3.00
  & $4.191\mathrm e{-5}$ & 4.03
  & $1.115\mathrm e{-5}$ & 4.04\\
3 & $1.15\mathrm e{-1}$
  & $1.323\mathrm e{-6}$ & 4.01
  & $9.748\mathrm e{-5}$ & 3.00
  & $2.646\mathrm e{-6}$ & 4.01
  & $7.031\mathrm e{-7}$ & 4.01\\
4 & $5.77\mathrm e{-2}$
  & $8.289\mathrm e{-8}$ & 4.00
  & $1.222\mathrm e{-5}$ & 3.00
  & $1.658\mathrm e{-7}$ & 4.00
  & $4.405\mathrm e{-8}$ & 4.00\\
\hline
\end{tabular}
\end{table}

\begin{table}[htbp]
\centering
\small
\caption{Convergence for the reconstructed-Laplacian CDG method on the unit sphere with $k=4$, $r=2$, $\ell=2$, and $\beta=1$.}
\label{tab:sphere-convergence-3}
\begin{tabular}{r|r|r|c|cc|cc|cc}
\hline
level & $N_T$ & dofs & $h$
& $E_{0,h}$ & rate
& $E_{2,h}$ & rate
& $E_{R,h}$ & rate\\
\hline
0 &    108 &     866 & $8.554\mathrm e{-1}$
  & $1.859\mathrm e{-2}$ & -
  & $9.636\mathrm e{-1}$ & -
  & $1.953\mathrm e{-1}$ & -\\
1 &    432 &    3458 & $4.522\mathrm e{-1}$
  & $1.186\mathrm e{-3}$ & 4.32
  & $8.364\mathrm e{-2}$ & 3.83
  & $2.384\mathrm e{-2}$ & 3.30\\
2 &   1728 &   13826 & $2.296\mathrm e{-1}$
  & $7.465\mathrm e{-5}$ & 4.08
  & $7.361\mathrm e{-3}$ & 3.59
  & $2.951\mathrm e{-3}$ & 3.08\\
3 &   6912 &   55298 & $1.153\mathrm e{-1}$
  & $4.675\mathrm e{-6}$ & 4.02
  & $6.810\mathrm e{-4}$ & 3.45
  & $3.667\mathrm e{-4}$ & 3.03\\
4 &  27648 &  221186 & $5.768\mathrm e{-2}$
  & $2.928\mathrm e{-7}$ & 4.00
  & $6.834\mathrm e{-5}$ & 3.32
  & $4.570\mathrm e{-5}$ & 3.01\\
\hline
\end{tabular}
\end{table}

\section{Extensions to Laplacian-dominated nonlinear surface problems}
\label{sec:nonlinear-extensions}
The reconstructed-Laplacian construction is not restricted to the linear surface biharmonic problem.  
It applies directly to surface PDEs whose principal part is governed by the Laplace-Beltrami operator.  
A representative class is the $L^2$-gradient flow equation of the form 
\begin{equation}\label{eq:general-laplacian-gradient-flow}
  \partial_t u
  +
  \gamma\deltag^2u
  -
  \eta\deltag u
  +
  \Psi'(u)
  =
  f
  \qquad\text{on }\Gamma,
\end{equation}
where $\gamma>0$, $\eta\in\mathbb R$, and $\Psi$ is a possibly nonconvex potential.  The associated energy is
\begin{equation}\label{eq:general-laplacian-energy}
  \mathcal F(u)
  :=
  \frac{\gamma}{2}
  \norm{\deltag u}_{L^2(\Gamma)}^2
  +
  \frac{\eta}{2}
  \norm{\nabla_\Gamma u}_{L^2(\Gamma)}^2
  +
  \int_\Gamma\Psi(u)\,ds
  -
  (f,u)_\Gamma,
\end{equation}
where $u\in H^2(\Gamma)$, $f\in L^2(\Gamma)$. Moreover, we assume $\Psi\in C^{\infty}$, $\Psi^{\prime}$ is locally Lipschitz and $\Psi$ satisfies the following condition: for some $p>2$,
\begin{equation}\label{eq:potential-lower-growth}
  \Psi(s)
  \ge
  c_p|s|^p-c_0,
  \qquad s\in\mathbb R,
\end{equation}
where $c_p,c_0>0$.

This class includes semilinear biharmonic equations, the surface analogue of the extended Fisher--Kolmogorov equation, and, after expanding a shifted Laplace-Beltrami operator, the surface Swift--Hohenberg equation.  
The purpose of this section is not to develop a complete nonlinear or temporal error analysis.  
Instead, we discuss the stability mechanism inherited from the reconstructed Laplacian and demonstrate the resulting method on the Swift-Hohenberg equation as an example.

\subsection{Stability inherited from the reconstructed Laplacian}
\label{subsec:nonlinear-coercivity}
We denote the stabilization term as $S_h(v_h,w_h):=\sum_{E\in\Eh}h_E^{-1}(J_E(v_h),J_E(w_h))_E$ and denote the leading reconstructed bilinear form as 
\begin{equation}\label{eq:leading-reconstructed-form}
  a_h^{(0)}(v_h,w_h)
  :=
  \bigl(
    \tdeltagh v_h,
    \tdeltagh w_h
  \bigr)_{\Gammah}
  +
  \beta S_h(v_h,w_h),
  \qquad
  v_h,w_h\in W_h.
\end{equation}
Here $\tdeltagh$ is the reconstructed surface Laplacian discussed in previous sections. By Lemma \ref{lem:stability}, we have the stability  
\begin{equation}\label{eq:any-beta-equivalence}
  c_\beta \enorm{v_h}^2
  \le
  a_h^{(0)}(v_h,v_h)
  \le
  C_\beta \enorm{v_h}^2
  \qquad
  \forall v_h\in W_h.
\end{equation}  

We define the discrete energy for $v_h\in W_h$
\begin{equation}\label{eq:general-discrete-nonlinear-energy}
  \mathcal F_h(v_h)
  :=
  \frac{\gamma}{2}a_h^{(0)}(v_h,v_h)
  +
  \frac{\eta}{2}
  \norm{\nabla_{\Gammah}v_h}_{L^2(\Gammah)}^2
  +
  \int_{\Gammah}\Psi(v_h)\,ds_h
  -
  \ell_h(v_h).
\end{equation}
We assume $\ell_h$ is a consistent discretization of the load and is uniformly bounded such that $ |\ell_h(v_h)|\le C_\ell\norm{v_h}_{L^2(\Gammah)}$.

\begin{proposition}[Coercivity of the total discrete energy]
\label{prop:nonlinear-energy-coercivity}
Let $\gamma>0$ and let $\beta>0$ be fixed.  There are constants
$c_{\mathcal F}>0$ and $C_{\mathcal F}>0$, independent of $h$, such
that
\begin{equation}\label{eq:nonlinear-energy-coercivity}
  \mathcal F_h(v_h)+C_{\mathcal F}
  \ge
  c_{\mathcal F}
  \left(
    \enorm{v_h}^2
    +
    \norm{v_h}_{L^2(\Gammah)}^2
    +
    \norm{v_h}_{L^p(\Gammah)}^p
  \right)
  \qquad
  \forall v_h\in W_h.
\end{equation}
The constant $c_{\mathcal F}$ may depend on $\beta$, but the result is
valid for every fixed $\beta>0$.
\end{proposition}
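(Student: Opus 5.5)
Our plan is to bound each of the four terms of $\mathcal F_h$ from below separately and then absorb the two possibly negative contributions, namely $\frac{\eta}{2}\norm{\nablagh v_h}^2$ when $\eta<0$ and $-\ell_h(v_h)$. We will absorb them into the positive quantities supplied by \eqref{eq:any-beta-equivalence} and the growth condition \eqref{eq:potential-lower-growth}.

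\textbf{Step 1 (leading part).} By \eqref{eq:any-beta-equivalence}, $\frac{\gamma}{2}a_h^{(0)}(v_h,v_h)\ge \frac{\gamma c_\beta}{2}\enorm{v_h}^2$. This is the only place $\beta$ enters, so $c_{\mathcal F}$ inherits the $\beta$-dependence of $c_\beta$.

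\textbf{Step 2 (potential).} Integrating \eqref{eq:potential-lower-growth} gives $\int_{\Gammah}\Psi(v_h)\,ds_h\ge c_p\norm{v_h}_{L^p(\Gammah)}^p-c_0|\Gammah|$. Here $|\Gammah|$ is uniformly bounded by \eqref{eq:mu-errors}. Since $p>2$, Young's inequality gives $\norm{v_h}_{L^2}^2\le |\Gammah|^{(p-2)/p}\norm{v_h}_{L^p}^2\le \epsilon\norm{v_h}_{L^p}^p+C_\epsilon$ for every $\epsilon>0$. So half of the $L^p$ term controls $\norm{v_h}_{L^2}^2$ up to a constant. The load term is handled by $|\ell_h(v_h)|\le C_\ell\norm{v_h}_{L^2}\le \delta\norm{v_h}_{L^2}^2+C_\ell^2/(4\delta)$.

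\textbf{Step 3 (gradient term, the main obstacle).} If $\eta\ge0$ there is nothing to do. If $\eta<0$, we must bound $\norm{\nablagh v_h}^2$ by $\enorm{v_h}$ and $\norm{v_h}_{L^2}$ with a small factor in front of $\enorm{v_h}^2$. Note that \eqref{eq:lower-order-control} alone is insufficient, because its constant is not small. Instead we use the integration-by-parts identity \eqref{eq:grad-by-parts} with $c_h=0$, which is legitimate since $v_h$ is continuous:
\[
\norm{\nablagh v_h}^2=-(\deltagh v_h,v_h)_{\Gammah}+\sum_{E}(J_E(v_h),v_h)_E .
\]
The volume term is at most $\enorm{v_h}\norm{v_h}_{L^2}$. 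For the edge term we apply the weighted Cauchy--Schwarz inequality, which bounds it by $\enorm{v_h}\bigl(\sum_E h_E\norm{v_h}_{L^2(E)}^2\bigr)^{1/2}$, and then the scaled trace inequality \eqref{eq:scaled-trace}:
\[
\sum_E h_E\norm{v_h}_{L^2(E)}^2\le C\bigl(\norm{v_h}_{L^2}^2+h^2\norm{\nablagh v_h}^2\bigr).
\]
Absorbing the $h\norm{\nablagh v_h}$ piece (via Young with $h\le 1$) yields $\norm{\nablagh v_h}^2\le C\enorm{v_h}\norm{v_h}_{L^2}+C\enorm{v_h}^2h^2$. Since that absorption gives a fixed constant rather than a small one, we then apply Young's inequality with $\epsilon$ to the product term:
\[
\norm{\nablagh v_h}^2\le \epsilon\enorm{v_h}^2+C_\epsilon\norm{v_h}_{L^2}^2 .
\]
This is an interpolation estimate in the spirit of Lemma~\ref{lem:known-approximation}, and the $h^2\enorm{v_h}^2$ contribution is harmless after it. We choose $\epsilon=\gamma c_\beta/(2|\eta|)$, so the negative gradient term eats at most half of Step 1.

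\textbf{Step 4 (assembly).} Combining the steps, we have $\mathcal F_h(v_h)\ge \frac{\gamma c_\beta}{4}\enorm{v_h}^2+c_p\norm{v_h}_{L^p}^p-K\norm{v_h}_{L^2}^2-C$ with $K$ independent of $h$. We then use Step 2 with $\epsilon$ small to trade $(K+1)\norm{v_h}_{L^2}^2$ for $\frac{c_p}{2}\norm{v_h}_{L^p}^p$ plus a constant. This leaves $\frac{\gamma c_\beta}{4}\enorm{v_h}^2+\frac{c_p}{2}\norm{v_h}_{L^p}^p+\norm{v_h}_{L^2}^2-C_{\mathcal F}$, which proves \eqref{eq:nonlinear-energy-coercivity}. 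All constants depend only on $\gamma,\eta,\beta,p,c_p,c_0,C_\ell$ and the mesh-uniform constants from the trace inequality and \eqref{eq:mu-errors}.
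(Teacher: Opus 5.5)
Your argument has the same structure as the paper's proof. It combines three ingredients: coercivity of $a_h^{(0)}$ from \eqref{eq:any-beta-equivalence}; an $\epsilon$-interpolation inequality $\norm{\nablagh v_h}_{L^2(\Gammah)}^2\le\epsilon\enorm{v_h}^2+C_\epsilon\norm{v_h}_{L^2(\Gammah)}^2$ to absorb the gradient term when $\eta<0$; and H\"older/Young with $p>2$ to trade the $L^2$ and load terms for half of the $L^p$ term. Steps 1 and 2 are correct, and Step 4 is correct once Step 3 is repaired.

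The one step that does not go through as written is the end of Step 3. What you actually derived is
\[
  \norm{\nablagh v_h}_{L^2(\Gammah)}^2\le(\epsilon+Ch^2)\enorm{v_h}^2+C_\epsilon\norm{v_h}_{L^2(\Gammah)}^2 .
\]
The condition $h\le1$ only makes $Ch^2$ bounded, not small. With your choice $\epsilon=\gamma c_\beta/(2|\eta|)$, the coefficient of $\enorm{v_h}^2$ stays positive only if $Ch^2<\gamma c_\beta/(2|\eta|)$. That is a mesh-size threshold depending on $\gamma,\eta,\beta$, so calling the $h^2\enorm{v_h}^2$ term ``harmless'' is not justified for all $h$.

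The fix is to use that $v_h\in W_h$. The scaling in Assumption~\ref{ass:computational-element-maps} gives the inverse estimate $h_K\norm{\nablagh v_h}_{L^2(K)}\le C\norm{v_h}_{L^2(K)}$; this is the same kind of inverse inequality used for lifting stability. Combined with \eqref{eq:scaled-trace}, it gives $\sum_{E\in\Eh}h_E\norm{v_h}_{L^2(E)}^2\le C\norm{v_h}_{L^2(\Gammah)}^2$. Hence $\norm{\nablagh v_h}_{L^2(\Gammah)}^2\le C\enorm{v_h}\norm{v_h}_{L^2(\Gammah)}$ with no $h^2$ term, and Young's inequality then gives the $\epsilon$-inequality uniformly in $h$. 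Alternatively, you can restrict to sufficiently small $h$, but then you must state that the threshold depends on the model parameters.
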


\begin{proof}
We first note that elementwise integration by parts, the
trace inequality, the lifting estimate, and Young's inequality give,
for every $\epsilon>0$,
\begin{equation}\label{eq:discrete-H1-H2-interpolation}
  \norm{\nabla_{\Gammah}v_h}_{L^2(\Gammah)}^2
  \le
  \epsilon \enorm{v_h}^2
  +
  C_\epsilon
  \norm{v_h}_{L^2(\Gammah)}^2.
\end{equation}

If $\eta<0$, apply \eqref{eq:discrete-H1-H2-interpolation} and choose $\epsilon>0$ sufficiently small relative to $\gamma c_\beta$, where $c_\beta$ is the coercive constant in \eqref{eq:any-beta-equivalence}.  
If $\eta\ge0$, the tangential gradient term is nonnegative and no absorption is needed.  
This gives
\[
  \frac{\gamma}{2}a_h^{(0)}(v_h,v_h)
  +
  \frac{\eta}{2}
  \norm{\nabla_{\Gammah}v_h}_{L^2(\Gammah)}^2
  \ge
  c\enorm{v_h}^2
  -
  C\norm{v_h}_{L^2(\Gammah)}^2.
\]
Since $p>2$, applying Young's inequality we obtain 
\[
  C\norm{v_h}_{L^2(\Gammah)}^2
  +
  |\ell_h(v_h)|
  \le
  \frac{c_p}{2}
  \norm{v_h}_{L^p(\Gammah)}^p
  +
  C.
\]
Combining these estimates with
\eqref{eq:potential-lower-growth} proves
\eqref{eq:nonlinear-energy-coercivity}.
\end{proof}

Proposition~\ref{prop:nonlinear-energy-coercivity} explains the role of the biharmonic principal part in nonlinear problems.  
The reconstructed form controls the discrete second derivatives for any fixed $\beta>0$, lower-order derivative terms are relatively bounded, and the nonlinear potential supplies any remaining $L^2$ control. 
Consequently, the discrete energy is coercive.  
This, together with the smoothness of $\Psi$, ensures the existence of discrete stationary solutions by the direct method of calculus of variations, and together with the energy dissipation structure, yields global-in-time existence of the semidiscrete solution of gradient flows.

The semidiscrete approximation of \eqref{eq:general-laplacian-gradient-flow} reads: find $u_h(t)\in W_h$ such that
\begin{equation}\label{eq:general-nonlinear-semidescrete}
  \bigl(\partial_tu_h,v_h\bigr)_{\Gammah}
  +
  \gamma a_h^{(0)}(u_h,v_h)
  +
  \eta
  \bigl(
    \nabla_{\Gammah}u_h,
    \nabla_{\Gammah}v_h
  \bigr)_{\Gammah}
  +
  \bigl(\Psi'(u_h),v_h\bigr)_{\Gammah}
  =
  \ell_h(v_h)
\end{equation}
for all $v_h\in W_h$.  
For a time-independent load, testing with $v_h=\partial_tu_h$ gives the energy dissipation relation: 
\begin{equation}\label{eq:general-nonlinear-energy-law}
  \frac{d}{dt}\mathcal F_h(u_h(t))=-\norm{\partial_tu_h}_{L^2(\Gammah)}^2.
\end{equation}

A complete nonlinear error analysis would combine the approximation and geometric consistency estimates proved in the preceding sections with local Lipschitz estimates for $\Psi'$ and a Gronwall argument.
We expect that the same coercivity estimate supplies a principal bound for energy-stable implicit or convex-splitting time discretizations.
Since the focus of the present work is the spatial reconstructed-Laplacian construction, a complete nonlinear and temporal error analysis for \eqref{eq:general-laplacian-gradient-flow} is not pursued here.

\subsection{Surface Swift-Hohenberg equation}
\label{subsec:swift-hohenberg}
We consider the surface Swift-Hohenberg equation as an example.
The equation reads, with $\varrho\ge 0$, 
\begin{equation}\label{eq:surface-swift-hohenberg}
  \partial_tu
  +
  \bigl(\deltag+\kappa^2\bigr)^2u
  -
  \varrho u
  +
  u^3
  =
  g
  \qquad
  \text{on }\Gamma\times(0,T].
\end{equation}
The Swift-Hohenberg equation is a classical model of wavelength selection and pattern formation \cite{swift1977hydrodynamic,cross1993pattern}.  
Numerical methods for this model have recently been considered in \cite{yu2023stabilized}.

For a time-independent load, equation
\eqref{eq:surface-swift-hohenberg} is the $L^2$-gradient flow of
\begin{equation}\label{eq:continuous-swift-energy}
  \mathcal F_{\mathrm{SH}}(v)
  :=
  \frac12
  \norm{
    \bigl(\deltag+\kappa^2\bigr)v
  }_{L^2(\Gamma)}^2
  -
  \frac{\varrho}{2}
  \norm{v}_{L^2(\Gamma)}^2
  +
  \frac14
  \norm{v}_{L^4(\Gamma)}^4
  -
  (g,v)_\Gamma.
\end{equation}
Expanding the first term by integration by parts gives
\begin{align*}
  \frac12
  \norm{
    \bigl(\deltag+\kappa^2\bigr)v
  }_{L^2(\Gamma)}^2
  &=
  \frac12\norm{\deltag v}_{L^2(\Gamma)}^2
  -
  \kappa^2
  \norm{\nabla_\Gamma v}_{L^2(\Gamma)}^2
  +
  \frac{\kappa^4}{2}
  \norm{v}_{L^2(\Gamma)}^2.
\end{align*}
Thus the equation belongs to the class \eqref{eq:general-laplacian-gradient-flow}, with a negative tangential gradient contribution and a quartic potential.

In the computation, we define the reconstructed shifted Laplacian $B_hv_h:=\tdeltagh v_h+\kappa^2v_h$.
We use the first-order convex-splitting scheme: given $u_h^{n-1}\in W_h$, find $u_h^n\in W_h$ such that
\begin{align}
  \left(
    \frac{u_h^n-u_h^{n-1}}{\tau},
    v_h
  \right)_{\Gammah}
  &+
  \bigl(B_hu_h^n,B_hv_h\bigr)_{\Gammah}
  +
  \beta S_h(u_h^n,v_h)
  \notag\\
  &+
  \bigl((u_h^n)^3,v_h\bigr)_{\Gammah}
  -
  \varrho(u_h^{n-1},v_h)_{\Gammah}
  =
  (g^{e}(t_n),v_h)_{\Gammah}.
  \label{eq:swift-convex-splitting}
\end{align}
We emphasize that, compared to the surface biharmonic equation on the closed surface $\Gamma$, no zero-mean constraint is imposed, and the load is not corrected to have zero mean.
We also note that \eqref{eq:swift-convex-splitting} is a nonlinear equation to solve at each step, and we use a Newton method to solve it in the computation.    

To test the spatial accuracy in the evolution problem, we integrate from $t=0$ to $T=1$ and compare $u_h(T)$ with the manufactured solution $u^e(T)$, using a sufficiently small time step to limit contamination by the first-order temporal error.

Take $\Gamma=\mathbb S^2$, $\kappa^2=1$, $ \varrho=1$.   
We choose
\begin{equation}
  u(x,y,z,t)=e^{-t}xy.
  \label{eq:swift-time-manufactured-solution}
\end{equation}
Since $\partial_tu=-e^{-t}xy=-u$ and $\deltag(xy)=-6xy$ on $\mathbb S^2$, we have $\bigl(\deltag+1\bigr)u=-5u$ and therefore $\bigl(\deltag+1\bigr)^2u=25u$. 
It follows that the corresponding forcing term is
\begin{equation}
  g(x,y,z,t)
  =
  23e^{-t}xy
  +
  e^{-3t}x^3y^3.
  \label{eq:swift-time-manufactured-forcing}
\end{equation}
The initial condition is therefore $u(x,y,z,0)=xy$. The extensions of these functions are defined by closest-point projection, which is $\vx/|\vx|$ for the unit sphere.  

We fix a physical time $T=1$ in the experiment, and test the spatial discretization error at $t=T$.  
Table~\ref{tab:swift-fixed-time-spatial} reports the results at $T=1$. As $r=2$, $k=3$, the observed rates approach the expected orders $\mathcal{O}(h^3)$ and $\mathcal{O}(h^2)$ for $L^2$ norm and the energy norm error respectively. In particular, we choose a small time step $\tau=0.001$ to prevent the first-order time discretization error from masking the high-order spatial convergence. 
We emphasize that observing the optimal spatial convergence order for smaller $h$ would require a substantially smaller time step $\tau$; although a high-order temporal discretization would be desirable, it is beyond the scope of this work.

\begin{table}[htbp]
\centering
\small
\caption{Spatial convergence for the reconstructed-Laplacian discretization of the surface Swift-Hohenberg equation. $T=1$, $r=2$, $k=3$, $\ell=1$, $\beta=1$, $\tau=0.001$.}
\label{tab:swift-fixed-time-spatial}
\begin{tabular}{r|r|r|c|cc|cc}
\hline
level & $N_T$ & dofs & $h$
& $E_{0,h}$ & rate
& $E_{2,h}$ & rate\\
\hline
0 &   20 &   92 & $1.051\mathrm e{0}$
  & $1.349\mathrm e{-2}$ & -
  & $6.715\mathrm e{-1}$ & -\\
1 &   80 &  362 & $6.180\mathrm e{-1}$
  & $2.493\mathrm e{-3}$ & 3.18
  & $3.079\mathrm e{-1}$ & 1.47\\
2 &  320 & 1442 & $3.249\mathrm e{-1}$
  & $2.513\mathrm e{-4}$ & 3.57
  & $8.302\mathrm e{-2}$ & 2.04\\
3 & 1280 & 5762 & $1.646\mathrm e{-1}$
  & $3.488\mathrm e{-5}$ & 2.90
  & $2.105\mathrm e{-2}$ & 2.02\\
\hline
\end{tabular}
\end{table}

Other Laplacian-dominated nonlinear surface models can be treated without changing the reconstructed surface Laplacian operator.  
One example is the surface analogue of the extended Fisher-Kolmogorov equation
\[
  \partial_tu
  +
  \gamma\deltag^2u
  -
  \eta\deltag u
  +
  \varepsilon^{-2}(u^3-u)
  =
  0.
\]
A genuinely vector-valued application with a componentwise
biharmonic leading term arises in fourth-order surface models for the
onset of cell blebbing; see
\cite{stinner2020fourth}.  The surface Cahn--Hilliard equation is
another possible application
\cite{du2011cahn,elliott2015cahn}, but its $H^{-1}$-gradient-flow
structure, mass conservation, and chemical-potential formulation
require additional considerations.  Reconstructed surface Hessians
for anisotropic fourth-order models are left for future work.

\section{Conclusions}
\label{sec:conclusions}

We have developed a reconstructed-Laplacian continuous/discontinuous Galerkin method for the surface biharmonic equation on parametric meshes.  
The reconstructed surface Laplacian operator consists of the broken Laplace-Beltrami operator and a local lifting of the conormal-flux jump.  
The method is stable for every fixed $\beta>0$. Hence no problem- or mesh-dependent lower threshold for
the stabilization parameter is required, although the stability constants may depend on $\beta$.

The analysis covers arbitrary finite element degree $k\ge2$ and
geometric degree $r\ge1$ and yields the leading estimates
\[
  \enorm{u^e-u_h}
  \lesssim h^{\min\{k-1,r\}},
  \qquad
  \norm{u^e-u_h}_{L^2(\Gammah)}
  \lesssim h^{\min\{q_k,r+1\}}.
\]
The experiments on benchmark examples confirm these rates for affine and curved meshes.  
The geometric diagnostics additionally provide computational evidence that the generated curved meshes satisfy the geometric assumptions used in the analysis and confirm a superconvergence in geometric quantities for the unit sphere and the particular computational meshes used when $r=2$. 

The same reconstructed Laplacian also applies directly to nonlinear
surface equations with a biharmonic principal part.  
We illustrate the method for the Swift-Hohenberg equation.  
Natural future directions include a complete nonlinear evolution analysis, mass-conservative and evolving-surface fourth-order problems, and tensor-valued reconstructions of the surface Hessian for anisotropic fourth-order problems on surfaces. 

\subsection*{Acknowledgment}
This work was supported by the NSFC grant No.12401512. 

AI tools were employed in a limited capacity to assist with code development and language editing. The author independently reviewed and validated all mathematical content, numerical results, and text, accepting full responsibility for the accuracy and integrity of the work.   
 
\appendix
\section{Technical lemmas}
\label{app:operator}
\begin{lemma}[Local strong Laplace-Beltrami comparison]
\label{lem:local-laplacian-perturbation}
Let $K\in\Kh$, let $K^l=p(K)$, and let $z\in H^2(K^l)$. Then
\begin{equation}\label{eq:pointwise-laplacian-perturbation}
  \left|
    \deltagh z^e-(\deltag z)^e
  \right|
  \le
  C\left(
    h_K^r|(\nablag z)^e|
    +
    h_K^{r+1}|(D_\Gamma^2z)^e|
  \right)
\end{equation}
almost everywhere on $K$.

Consequently, for every $v\in H^2(K)$,
\begin{equation}\label{eq:local-laplacian-perturbation}
  \norm{\deltagh v-(\deltag v^l)^e}_{L^2(K)}
  \le
  C\left(
    h_K^r\norm{\nablagh v}_{L^2(K)}
    +
    h_K^{r+1}
      \norm{D_{\Gammah}^2v}_{L^2(K)}
  \right).
\end{equation}
\end{lemma}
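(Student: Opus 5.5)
The plan is to write both Laplacians in divergence form with respect to a common set of tangential coordinates and compare the resulting coefficient fields. I work on the reference triangle through the parametrization $F_K^r$ of $K$ and $G:=p\circ F_K^r$ of $K^l$. Both are legitimate charts by Assumption~\ref{ass:computational-element-maps}, and $G$ has the same scaled derivative bounds because $p$ is $C^{r+1}$.

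\textbf{Step 1: coordinate formulas.} In these coordinates,
\[
  \deltagh z^e=\frac{1}{\sqrt{g_h}}\,\partial_i\bigl(\sqrt{g_h}\,g_h^{ij}\,\partial_j\widehat z\bigr),
  \qquad
  (\deltag z)^e=\frac{1}{\sqrt{g}}\,\partial_i\bigl(\sqrt{g}\,g^{ij}\,\partial_j\widehat z\bigr),
\]
with the same pulled-back function $\widehat z=z\circ G=z^e\circ F_K^r$. The metrics are $g_h=(DF_K^r)^TDF_K^r$ and $g=(DG)^TDG$. The difference is a first-order term plus a second-order term. The second-order term is
\[
  (g_h^{ij}-g^{ij})\,\partial_{ij}\widehat z .
\]
The first-order term is
\[
  \Bigl[\tfrac{1}{\sqrt{g_h}}\partial_i\bigl(\sqrt{g_h}\,g_h^{ij}\bigr)-\tfrac{1}{\sqrt{g}}\partial_i\bigl(\sqrt{g}\,g^{ij}\bigr)\Bigr]\partial_j\widehat z .
\]

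\textbf{Step 2: metric estimates.} I need to show that, after scaling by $h_K$,
\[
  |g_h^{ij}-g^{ij}|\lesssim h_K^{r+1}\,h_K^{-2},
\]
and that the first derivatives satisfy the analogous bound with $h_K^{r}\,h_K^{-2}$ on the right. The zeroth-order bound follows from $DG=\nabla p\,DF_K^r=(P-d\mathcal W)DF_K^r$, which gives
\[
  g-g_h=(DF_K^r)^T\bigl[(P-d\mathcal W)^2-I\bigr]DF_K^r .
\]
Restricted to the tangent plane of $K$ this equals $(DF_K^r)^T(B_h^TB_h-P_h)DF_K^r$, so \eqref{eq:PhBh-est} yields $O(h_K^{r+1})\,h_K^2$. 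Inverting with \eqref{eq:parametric-shape-regularity} then gives the bound on $g^{ij}$.

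\textbf{Step 3: the derivative bound (main obstacle).} Bounding $\partial_k(g-g_h)$ by $h_K^{r}\,h_K^2$ is the hard part, since \eqref{eq:PhBh-est} is only an $L^\infty$ statement. I would first try the identity $(P-d\mathcal W)^2-I=-\vn\otimes\vn-2d\mathcal W+d^2\mathcal W^2$ (with $\vn=\nabla d$ extended). This gives
\[
  g-g_h=-(\vn\cdot\partial_iF)(\vn\cdot\partial_jF)+O(d)\,\text{terms}.
\]
Here $\vn\cdot\partial_iF_K^r=\partial_i(d\circ F_K^r)$ exactly, because $\nabla d=\vn$. Then \eqref{eq:scaled-distance-defect} with $m=1,2$ controls $\partial_i(d\circ F)$ by $h_K^{r}\,h_K$ and its derivatives by $h_K^{r-1}\,h_K^{2}$. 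The product $(\vn\cdot\partial_iF)(\vn\cdot\partial_jF)$ and its first derivative are therefore $O(h_K^{2r})\,h_K^2$ and $O(h_K^{2r-1})\,h_K^3$, the latter since $\partial_k$ brings in one factor $h_K^{r-1}h_K^2$ from the second derivative and one factor $h_K^{r}h_K$. Since $r\ge1$, these are $O(h_K^{r+1})$ and $O(h_K^r)$ after scaling. The $d$-terms satisfy $d\circ F=O(h_K^{r+1})$ with derivative $O(h_K^{r})h_K$, using \eqref{eq:curved-map-derivative-bounds} and the smoothness of $\mathcal W$. The same holds for $\sqrt g/\sqrt{g_h}$, which also follows from \eqref{eq:mu-errors} in its zeroth-order form.

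\textbf{Step 4: convert back to tangential derivatives.} With $\partial\widehat z\sim h_K|(\nablag z)^e|$ and $\partial^2\widehat z\lesssim h_K^2|(D_\Gamma^2 z)^e|+h_K^2|(\nablag z)^e|$, the first-order term is bounded by $h_K^{r}|(\nablag z)^e|$ and the second-order term by $h_K^{r+1}|(D_\Gamma^2 z)^e|$, plus a lower-order contribution absorbed into the first. This gives \eqref{eq:pointwise-laplacian-perturbation}.

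\textbf{Step 5: the $L^2$ consequence.} Apply the pointwise bound with $z=v^l$, square, and integrate over $K$. Then convert $|(\nablag v^l)^e|$ and $|(D_\Gamma^2v^l)^e|$ into $|\nablagh v|$ and $|D_{\Gammah}^2 v|$. The gradient conversion uses \eqref{eq:gradient-transform} with \eqref{eq:Bh-bounded}. For the Hessian, I use the elementwise chain rule in the proof of Lemma~\ref{lem:norm-trace}, which yields $|D_\Gamma^2 v^l|^e\lesssim|D_{\Gammah}^2v|+|\nablagh v|$. The extra gradient term carries the factor $h_K^{r+1}$ and is absorbed into the first term, which gives \eqref{eq:local-laplacian-perturbation}.
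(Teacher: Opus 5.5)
Your proposal is correct and follows essentially the same route as the paper. You pull both operators back through a common parametrization ($\Phi_h$ and $\Phi_l=p\circ\Phi_h$ in the paper), use the exact identity $G_l-G_h=-\nabla d_h\otimes\nabla d_h-2d_h\nabla\Phi_h^T\mathcal W\nabla\Phi_h+d_h^2\nabla\Phi_h^T\mathcal W^2\nabla\Phi_h$ with the scaled distance bounds to get an $O(h_K^{r+1})$ metric defect and an $O(h_K^r)$ defect in its first derivatives, and then convert coordinate derivatives back to tangential ones. The main difference is that the paper works on the rescaled triangle $h_K\widehat T$, which avoids the $h_K$-bookkeeping that is slightly off in your Steps 2--3. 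For reference-coordinate derivatives the targets should be $h_K^{r}h_K^{-1}$ for $\partial(g^{ij}-g_h^{ij})$ and $h_K^{r}h_K^{3}$ for $\partial(g-g_h)$, not $h_K^rh_K^{-2}$ and $h_K^rh_K^{2}$. Your actual Step 3 computation nevertheless delivers the correct bounds.
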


\begin{proof}
Throughout the proof, $C$ is independent of $K$ and $h$, and we
assume without loss of generality that $h_K\le 1$. We divide the proof
into four steps.

\medskip
\noindent
\emph{Step 1: Scaled coordinates and geometric estimates.}
Introduce the scaled reference triangle
\[
  \widetilde K:=h_K\widehat T
\]
and the parametrizations
\[
  \Phi_h(y):=F_K^r(y/h_K),
  \qquad
  \Phi_l(y):=p(\Phi_h(y)),
  \qquad y\in\widetilde K.
\]
Throughout the proof, $\nabla$ means the differentiation with respect to $y$. 
The shape-regularity assumptions \eqref{eq:parametric-shape-regularity} and \eqref{eq:curved-map-derivative-bounds} imply that 
\begin{equation}\label{eq:scaled-map-regularity}
  \norm{\nabla\Phi_h}_{L^\infty(\widetilde K)}
  +
  \norm{D^2\Phi_h}_{L^\infty(\widetilde K)}
  \le C.
\end{equation}
Similarly, $\nabla\Phi_h^T\nabla\Phi_h$ and $(\nabla\Phi_l)^T\nabla\Phi_l$ are of order $\mathcal{O}(1)$ and their eigenvalues are bounded above and below by positive constants independent of $K$ and $h$.

Let $d_h:=d\circ\Phi_h$.
The scaled distance condition \eqref{eq:scaled-distance-defect} gives
\begin{equation}\label{eq:scaled-distance-estimates}
  \norm{d_h}_{L^\infty(\widetilde K)}
  \le Ch_K^{r+1},
  \qquad
  \norm{\nabla d_h}_{L^\infty(\widetilde K)}
  \le Ch_K^r,
  \qquad
  \norm{D^2d_h}_{L^\infty(\widetilde K)}
  \le Ch_K^{r-1}.
\end{equation}

We shall use these estimates to show that the second-order coefficients of the two Laplace-Beltrami operators differ by $\mathcal{O}(h_K^{r+1})$, whereas their first-order coefficients differ by $\mathcal{O}(h_K^r)$.

\medskip
\noindent
\emph{Step 2: Estimates for the metric tensors.}
All exact geometric quantities in this step are evaluated at
$\Phi_h(y)\in\Gammah$. 
Recalling the relations \eqref{eq:closest-point}, it follows that $\nabla\Phi_l=(P-d_h\mathcal W)\nabla\Phi_h$. 
Using $P\mathcal W=\mathcal WP=\mathcal W$ and $\nabla d_h=\nabla(d\circ\Phi_h)=\nabla\Phi_h^T\vn$, 
we obtain the exact metric-defect identity
\begin{equation}\label{eq:metric-defect-identity}
  (\nabla\Phi_l)^T\nabla\Phi_l-(\nabla\Phi_h)^T\nabla\Phi_h
  =
  -\nabla d_h\otimes \nabla d_h
  -2d_h\nabla\Phi_h^T\mathcal W\nabla\Phi_h
  +d_h^2\nabla\Phi_h^T\mathcal W^2\nabla\Phi_h.
\end{equation}

Since $\mathcal W$ is uniformly bounded, \eqref{eq:scaled-map-regularity}-\eqref{eq:metric-defect-identity} yield
\begin{equation}\label{eq:metric-defect-zero}
  \norm{(\nabla\Phi_l)^T\nabla\Phi_l-(\nabla\Phi_h)^T\nabla\Phi_h}_{L^\infty(\widetilde K)}
    \le Ch_K^{r+1}.
\end{equation}

Differentiating \eqref{eq:metric-defect-identity} with respect to $y$ and using again the uniform bounds for $\nabla\Phi_h$, $D^2\Phi_h$, $\mathcal W$, and $\nabla\mathcal W$, we find
\begin{equation}\label{eq:metric-defect-first}
\begin{aligned}
  &\norm{\nabla\left((\nabla\Phi_l)^T\nabla\Phi_l-(\nabla\Phi_h)^T\nabla\Phi_h\right)}_{L^\infty(\widetilde K)}
  \\
  &\le
  C\Big(
    \norm{D^2d_h}_{L^\infty(\widetilde K)}
    \norm{\nabla d_h}_{L^\infty(\widetilde K)}
    +
    \norm{\nabla d_h}_{L^\infty(\widetilde K)}
  \\
  &\hspace{2.5em}
    +\norm{d_h}_{L^\infty(\widetilde K)}
    +
    \norm{d_h}_{L^\infty(\widetilde K)}\norm{\nabla d_h}_{L^\infty(\widetilde K)}
    +
    \norm{d_h}_{L^\infty(\widetilde K)}^2
  \Big)
  \\
  &\le Ch_K^r.
\end{aligned}
\end{equation}

Moreover, we define
\begin{equation*}
G_l:=(\nabla\Phi_l)^T\nabla\Phi_l,
\quad
G_h:=(\nabla\Phi_h)^T\nabla\Phi_h,
\quad
  (g_a^{ij}):=G_a^{-1},
  \quad
  q_a:=\log\det G_a,
  \quad a\in\{h,l\}.
\end{equation*}
Uniform positive definiteness of $G_h$ and $G_l$, together with the identity 
\begin{equation*}
G_l^{-1}-G_h^{-1}=-G_l^{-1}(G_l-G_h)G_h^{-1},
\end{equation*}
as well as  \eqref{eq:metric-defect-zero} and \eqref{eq:metric-defect-first}, implies 
\begin{align}
  \norm{G_l^{-1}-G_h^{-1}}_{L^\infty(\widetilde K)}
  &\le Ch_K^{r+1},
  \label{eq:inverse-metric-defect-zero}\\
  \norm{\nabla(G_l^{-1}-G_h^{-1})}_{L^\infty(\widetilde K)}
  &\le Ch_K^r.
  \label{eq:inverse-metric-defect-first}
\end{align}
Similarly, using $\nabla\log\det G=\operatorname{tr}(G^{-1}\nabla G),$
we obtain
\begin{align}
  \norm{q_l-q_h}_{L^\infty(\widetilde K)}
  &\le Ch_K^{r+1},
  \label{eq:determinant-defect-zero}\\
  \norm{\nabla(q_l-q_h)}_{L^\infty(\widetilde K)}
  &\le Ch_K^r.
  \label{eq:determinant-defect-first}
\end{align}

\medskip
\noindent
\emph{Step 3: Pointwise comparison of the two Laplace-Beltrami operators.}
We view $\Phi_{a}$ with $a\in\{h,l\}$ as parametrizations defined on $\widetilde K$ mapping to local charts on $\Gammah$ and $\Gamma$.  
For a scalar function $\widehat v$ on $\widetilde K$, the Laplace-Beltrami operators $\Delta_a$ ($a\in\{h,l\}$) are pull-backs of $\deltagh$ and $\deltag$, and the operator $\Delta_a$ is induced by the metric $G_a$ and has the coordinate representation
\begin{equation}\label{eq:coordinate-laplacians}
  \Delta_a\widehat v
  =
  g_a^{ij}\partial_{ij}\widehat v
  +b_a^j\partial_j\widehat v,
  \qquad
  b_a^j
  :=
  J_a^{-1}\partial_i(J_ag_a^{ij}),
  \qquad
  J_a:=(\det G_a)^{1/2}.
\end{equation}
Equivalently,
\[
  b_a^j
  =
  \partial_i g_a^{ij}
  +
  \frac12g_a^{ij}\partial_i q_a.
\]
Consequently,
\begin{align*}
  b_l^j-b_h^j
  &=
  \partial_i(g_l^{ij}-g_h^{ij})
  +
  \frac12(g_l^{ij}-g_h^{ij})\partial_iq_l
  +
  \frac12g_h^{ij}\partial_i(q_l-q_h).
\end{align*}
The individual metric coefficients and their first derivatives are
uniformly bounded. Therefore,
\eqref{eq:inverse-metric-defect-zero}-%
\eqref{eq:determinant-defect-first} give
\begin{equation}\label{eq:first-order-coefficient-defect}
  \norm{b_l-b_h}_{L^\infty(\widetilde K)}
  \le Ch_K^r.
\end{equation}

Now define the common pull-back
\[
  \widehat z
  :=
  z^e\circ\Phi_h
  =
  z\circ\Phi_l.
\]
Subtracting the two coordinate representations
\eqref{eq:coordinate-laplacians}, and using
\eqref{eq:inverse-metric-defect-zero} and
\eqref{eq:first-order-coefficient-defect}, yields
\begin{equation}\label{eq:coordinate-operator-defect}
  |(\Delta_h-\Delta_l)\widehat z|
  \le
  C\left(
    h_K^{r+1}|D^2\widehat z|
    +
    h_K^r|\nabla\widehat z|
  \right).
\end{equation}

The uniform regularity of $\Phi_l$ and its inverse, together with the chain rule, leads to almost everywhere
\[
  |\nabla\widehat z|
  \le
  C|(\nablag z)\circ\Phi_l|
\]
and
\[
  |D^2\widehat z|
  \le
  C\left(
    |(D_\Gamma^2z)\circ\Phi_l|
    +
    |(\nablag z)\circ\Phi_l|
  \right).
\]
Moreover,
\[
  (\Delta_h\widehat z)(y)
  =
  (\deltagh z^e)(\Phi_h(y)),
  \qquad
  (\Delta_l\widehat z)(y)
  =
  (\deltag z)(\Phi_l(y)).
\]
Substituting these identities into \eqref{eq:coordinate-operator-defect} and absorbing the term
$h_K^{r+1}|(\nablag z)\circ\Phi_l|$ into the $h_K^r$ term concludes the proof of \eqref{eq:pointwise-laplacian-perturbation}.

\medskip
\noindent
\emph{Step 4: The $L^2$ estimate.}
Let $v\in H^2(K)$ and set $z:=v^l\in H^2(K^l)$. Integrating
\eqref{eq:pointwise-laplacian-perturbation} over $K$ gives
\begin{align*}
  \norm{\deltagh v-(\deltag v^l)^e}_{L^2(K)}
  &\le
  C\Big(
    h_K^r\norm{(\nablag v^l)^e}_{L^2(K)}
    +
    h_K^{r+1}
      \norm{(D_\Gamma^2v^l)^e}_{L^2(K)}
  \Big).
\end{align*}
By norm equivalences in Lemma \ref{lem:norm-trace}, we have 
\[
  \norm{(\nablag v^l)^e}_{L^2(K)}
  \le
  C\norm{\nablagh v}_{L^2(K)}
\]
and
\[
  \norm{(D_\Gamma^2v^l)^e}_{L^2(K)}
  \le
  C\left(
    \norm{D_{\Gammah}^2v}_{L^2(K)}
    +
    \norm{\nablagh v}_{L^2(K)}
  \right).
\]
Consequently,
\begin{equation*}
  \norm{\deltagh v-(\deltag v^l)^e}_{L^2(K)}
  \le
  C\Big(
    h_K^r\norm{\nablagh v}_{L^2(K)}
    +
    h_K^{r+1}\norm{D_{\Gammah}^2v}_{L^2(K)}
    \Big).
\end{equation*}
This proves \eqref{eq:local-laplacian-perturbation}.
\end{proof}

\begin{lemma}[Smooth--smooth geometric consistency]
\label{lem:smooth-smooth-consistency}
For $z,\phi\in H^4(\Gamma)$,
\begin{equation}\label{eq:smooth-smooth-consistency}
  \left|
    a_h(z^e,\phi^e)-a(z,\phi)
  \right|
  \le
  Ch^{r+1}
  \norm{z}_{H^4(\Gamma)}
  \norm{\phi}_{H^4(\Gamma)}.
\end{equation}
\end{lemma}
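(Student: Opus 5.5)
The plan is to expand $a_h(z^e,\phi^e)$ with \eqref{eq:expanded-form}, extended to smooth arguments through \eqref{eq:extended-reconstructed-form}, and to compare it with $a(z,\phi)$ term by term. The aim is to show that every geometric defect shows up either in \emph{weak} (variational) form, where the defect is $\mathcal O(h^{r+1})$, or as a product of two small factors. Concretely,
\[
  a_h(z^e,\phi^e)
  =(\deltagh z^e,\deltagh\phi^e)_{\Gammah}
  -\sum_{E}(\mean{\Pi_{Q_h}\deltagh z^e},J_E(\phi^e))_E
  -\sum_{E}(\mean{\Pi_{Q_h}\deltagh \phi^e},J_E(z^e))_E
  +(R_hJ(z^e),R_hJ(\phi^e))_{\Gammah}
  +\beta S_h(z^e,\phi^e).
\]

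The quadratic jump terms are harmless. By \eqref{eq:smooth-geometric-jump-global} and Lemma~\ref{lem:lifting-stability}, the lifting--lifting term and the penalty term are both bounded by $Ch^{2r}\norm{z}_{H^2(\Gamma)}\norm{\phi}_{H^2(\Gamma)}$. Since $r\ge1$, this is $\mathcal O(h^{r+1})$.

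The principal volume term has to be handled in weak form rather than through the pointwise bound \eqref{eq:pointwise-laplacian-perturbation}, which only gives $h^r$. On each $K$, integrate $(\deltagh z^e,\deltagh\phi^e)_K$ by parts once to move one Laplacian onto a gradient pairing:
\[
  (\deltagh z^e,\deltagh\phi^e)_K=-(\nablagh\deltagh z^e,\nablagh\phi^e)_K+(\deltagh z^e,\vnu_K\cdot\nablagh\phi^e)_{\partial K}.
\]
Do the same on $K^l$ for $(\deltag z,\deltag\phi)_{K^l}$. On $\Gamma$ the boundary terms cancel after summation over $K^l$, because $\vnu_{E^l}^++\vnu_{E^l}^-=0$ and $\deltag z$ is continuous. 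For the resulting gradient pairings, transfer to $\Gammah$ using \eqref{eq:gradient-transform}, $\mu_h$, \eqref{eq:mu-errors} and \eqref{eq:PhBh-est}. The coefficient of $(\nablag\,\cdot)^e\cdot(\nablag\,\cdot)^e$ then differs from its $\Gamma$ counterpart by $\mathcal O(h^{r+1})$, and this is the mechanism that produces the extra power. What remains after this step is:
\begin{itemize}
\item the pairing of the Laplacian defect $w:=\deltagh z^e-(\deltag z)^e$ against $\phi^e$ in gradient form, which is $\mathcal O(h^r)$ pointwise only;
\item the edge sums $\sum_E(\deltagh z^e,J_E(\phi^e))_E$ left over from the integration by parts on $\Gammah$.
\end{itemize}
For $w$, use the divergence-form representation $\deltagh z^e=\di_{\Gammah}(P_h\nablagh z^e)$ together with the identities for $B_h$ and $\mu_h$. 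This writes $(w,\psi)_K$ as a boundary integral plus a volume integral whose coefficient defect is $\mathcal O(h^{r+1})$, applied with $\psi=\deltagh\phi^e$ or $\psi=\phi^e$.

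The edge sums from the integration by parts combine with the two explicit consistency terms of the expanded form. Split $\Pi_{Q_h}=I-(I-\Pi_{Q_h})$. The projection remainders are products: $(I-\Pi_{Q_h})\deltagh z^e=\mathcal O(h^{\alpha_k})$ on edges, with $\alpha_k\ge1$, and $J_E(\phi^e)=\mathcal O(h^{r+1})$ pointwise by \eqref{eq:local-smooth-geometric-jump}. Summing over edges costs a factor $h^{-1}$, so these contribute $h^{r+\alpha_k}\le h^{r+1}$. The averages $\mean{\deltagh z^e}$ nearly cancel the one-sided traces coming from the integration by parts. Their mismatch is $\tfrac12$ of the jump of $\deltagh z^e$, which is $\mathcal O(h^r)$ by \eqref{eq:pointwise-laplacian-perturbation}, multiplied by $J_E(\phi^e)=\mathcal O(h^{r+1})$. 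After the $h^{-1}$ loss this gives $\mathcal O(h^{2r})$.

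The main obstacle is the leading surviving edge term $\sum_E((\deltag z)^e,P^e(\vnu_E^++\vnu_E^-)\cdot X)_E$, where $X$ is the tangential field $(P^e-d\mathcal W)(\nablag\phi)^e$. It is pointwise $\mathcal O(h^{r+1})$, but summing over all edges loses $h^{-1}$. My first attempt would be to show that this term cancels algebraically against the boundary terms produced when the weak Laplacian defect $w$ is integrated by parts. In other words, I would organize the entire computation as
\[
  \sum_K\int_{\partial K}g\,\vnu_K\cdot Y\,ds_h-\sum_{K^l}\int_{\partial K^l}g\,\vnu\cdot Y^l\,ds,
\]
with $g$ a smooth function ($\deltag z$ or $\deltag\phi$, extended), and convert it by the divergence theorem into element integrals of $\di_{\Gammah}(gP_hY)$ versus $\di_\Gamma(gY^l)$ in weak form. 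These element integrals involve only the $\mathcal O(h^{r+1})$ metric defect \eqref{eq:metric-defect-zero} and \eqref{eq:PhBh-est}, plus first-order terms which are again integrated by parts against the smooth test function. If exact cancellation fails, the fallback is a sharper bound on the $h$-weighted edge sum of $P^e(\vnu_E^++\vnu_E^-)$ using smoothness of $g$ across neighbouring elements. Either way, the $H^4$ regularity of $z$ and $\phi$ is what allows every edge trace to be taken on globally smooth functions without any inverse estimates.
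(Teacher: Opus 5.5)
Your bounds for the lifting--lifting, penalty and projection-remainder terms are fine. The core of the argument, however, has a genuine gap.

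The first problem is an algebraic slip where you merge the one-sided traces from your elementwise integration by parts with the explicit consistency term. Set $a^\pm:=\deltagh z^e|_{K^\pm}$ and $b^\pm:=\vnu_E^\pm\cdot\nablagh\phi^e|_{K^\pm}$. Then
\[
  a^+b^++a^-b^--\tfrac12(a^++a^-)(b^++b^-)=\tfrac12(a^+-a^-)(b^+-b^-).
\]
The small quantity is $b^++b^-=J_E(\phi^e)$, whereas $b^+-b^-\approx 2\vnu_E^+\cdot(\nablag\phi)^e=\mathcal O(1)$. Write $\deltagh z^e=q_z+\varepsilon_z$, where $q_z:=(\deltag z)^e$ is globally continuous; thus $\varepsilon_z$ is your $w$. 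The mismatch you discard is then $(\nablagh\varepsilon_z,\nablagh\phi^e)_{\Gammah}+(\varepsilon_z,\deltagh\phi^e)_{\Gammah}$ up to $\mathcal O(h^{2r})$, with the gradient pairing taken elementwise. This cancels your volume remainder $-(\nablagh w,\nablagh\phi^e)_{\Gammah}$ and leaves $(\varepsilon_z,\deltagh\phi^e)_{\Gammah}$. The pointwise bound of Lemma~\ref{lem:local-laplacian-perturbation} only makes that term $\mathcal O(h^r)$. So for the defect part your first integration by parts is circular, and the list of surviving terms you go on to treat is not the right one.

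The second, decisive problem is that the $\mathcal O(h^{r+1})$ bound for what you call the main obstacle is never proved. You offer a hoped-for cancellation and a fallback but carry out neither, and the direct bound via \eqref{eq:smooth-geometric-jump-global} gives only $\mathcal O(h^r)$. Your instinct that this edge sum must cancel against a Laplacian-defect term is correct. The paper makes the cancellation exact by integrating by parts the consistency edge sums instead of the volume term.

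First replace $\Pi_{Q_h}\deltagh z^e$ and $\Pi_{Q_h}\deltagh\phi^e$ by $q_z$ and $q_\phi:=(\deltag\phi)^e$; the remainders are $\mathcal O(h^{2r}+h^{r+\alpha_k})$. The elementwise divergence theorem with the continuous $q_z$ then gives
\[
  \sum_{E\in\Eh}(q_z,J_E(\phi^e))_E=(\nablagh q_z,\nablagh\phi^e)_{\Gammah}+(q_z,q_\phi+\varepsilon_\phi)_{\Gammah},
\]
and the same identity holds with $z$ and $\phi$ interchanged. At the same time, $(\deltagh z^e,\deltagh\phi^e)_{\Gammah}=(q_z+\varepsilon_z,q_\phi+\varepsilon_\phi)_{\Gammah}$. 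The cross terms $(q_z,\varepsilon_\phi)_{\Gammah}$ and $(\varepsilon_z,q_\phi)_{\Gammah}$ therefore cancel identically.

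Now use $(\nablag\deltag z,\nablag\phi)_\Gamma=-(\deltag z,\deltag\phi)_\Gamma=(\nablag\deltag\phi,\nablag z)_\Gamma$. What remains is:
\begin{itemize}
\item $(\varepsilon_z,\varepsilon_\phi)_{\Gammah}=\mathcal O(h^{2r})$;
\item two gradient pairings whose transfer to $\Gamma$ involves only $B_hB_h^T-\mu_hP^e=\mathcal O(h^{r+1})$;
\item the mass defect $((1-\mu_h)q_z,q_\phi)_{\Gammah}$.
\end{itemize}
No weak-form estimate of the Laplacian defect and no refined bound on edge sums of $P^e(\vnu_E^++\vnu_E^-)$ is needed.
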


\begin{proof}
Expanding \eqref{eq:expanded-form} for $z^e$ and $\phi^e$, and
inserting $(\deltag z)^e$ and $(\deltag\phi)^e$ into its two edge terms, gives
\begin{equation}\label{eq:smooth-smooth-decomposition}
\begin{aligned}
  a_h(z^e,\phi^e)-a(z,\phi)
  ={}&
  I_{0,h}+I_{\Pi,h}+I_{R,h}+I_{S,h},
\end{aligned}
\end{equation}
where
\begin{align}
  I_{0,h}
  :={}&
  (\deltagh z^e,\deltagh\phi^e)_{\Gamma_h}
  -\sum_{E\in\Eh}((\deltag z)^e,J_E(\phi^e))_E
  -\sum_{E\in\Eh}((\deltag\phi)^e,J_E(z^e))_E 
  \label{eq:smooth-principal-defect}\\
  {}&-(\deltag z,\deltag\phi)_\Gamma,
  \notag\\
  I_{\Pi,h}
  :={}&
  -\sum_{E\in\Eh}
  \left(
    \mean{
      \Pi_{Q_h}(\deltagh z^e-(\deltag z)^e)
      +(\Pi_{Q_h}-I)(\deltag z)^e
    },
    J_E(\phi^e)
  \right)_E
  \label{eq:smooth-projection-defect}\\
  &-
  \sum_{E\in\Eh}
  \left(
    \mean{
      \Pi_{Q_h}(\deltagh\phi^e-(\deltag\phi)^e)
      +(\Pi_{Q_h}-I)(\deltag\phi)^e
    },
    J_E(z^e)
  \right)_E,
 \notag\\
  I_{R,h}
  :={}&
  (R_hJ(z^e),R_hJ(\phi^e))_{\Gamma_h},
  \label{eq:smooth-lifting-defect}
  \\
  I_{S,h}
  :={}&
  \beta\sum_{E\in\Eh}h_E^{-1}
  (J_E(z^e),J_E(\phi^e))_E.
  \label{eq:smooth-penalty-defect}
\end{align}

We first estimate $I_{0,h}$.  We define 
\begin{equation*}
  q_z:=(\deltag z)^e,
  \quad
  q_\phi:=(\deltag\phi)^e,
\quad
  \varepsilon_z
  :=\deltagh z^e-(\deltag z)^e,
  \quad
  \varepsilon_\phi
  :=\deltagh\phi^e-(\deltag\phi)^e.
\end{equation*}
We integrate by parts elementwise to derive
\begin{align*}
  \sum_{E\in\Eh}(q_z,J_E(\phi^e))_E
  &=
  (\nablagh q_z,\nablagh\phi^e)_{\Gamma_h}
  +(q_z,q_\phi+\varepsilon_\phi)_{\Gamma_h},
  \\
  \sum_{E\in\Eh}(q_{\phi},J_E(z^e))_E
  &=
  (\nablagh q_\phi,\nablagh z^e)_{\Gamma_h}
  +(q_\phi,q_z+\varepsilon_z)_{\Gamma_h}.
\end{align*}
Then, 
\begin{align}
  &I_{0,h}
  =
  (\varepsilon_z,\varepsilon_\phi)_{\Gamma_h}
  -
  \Big[
    (\nablagh q_z,\nablagh\phi^e)_{\Gamma_h}
    -
    (\nablag\deltag z,\nablag\phi)_\Gamma
  \Big]
  \notag\\
  &-
  \Big[
    (\nablagh q_\phi,\nablagh z^e)_{\Gamma_h}
    -
    (\nablag\deltag\phi,\nablag z)_\Gamma
  \Big]
  -
  \Big[
    (q_z,q_\phi)_{\Gamma_h}
    -
    (\deltag z,\deltag\phi)_\Gamma
  \Big].
  \label{eq:smooth-principal-cancellation}
\end{align}
We have also used the exact surface integration-by-parts
\[
  (\nablag\deltag z,\nablag\phi)_\Gamma
  =
  -(\deltag z,\deltag\phi)_\Gamma=
  (\nablag\deltag\phi,\nablag z)_\Gamma.
\]

By \eqref{eq:laplacian-comparison-smooth}, the first term on the right-hand side of
\eqref{eq:smooth-principal-cancellation} satisfies
\begin{equation}\label{eq:smooth-operator-product}
  |(\varepsilon_z,\varepsilon_\phi)_{\Gamma_h}|
  \le
  Ch^{2r}
  \norm{z}_{H^2(\Gamma)}
  \norm{\phi}_{H^2(\Gamma)}
\end{equation}

The transformation \eqref{eq:gradient-transform} and the change of surface measure yield
\begin{equation*}
  (\nablagh q_z,\nablagh\phi^e)_{\Gamma_h}
  -
  (\nablag\deltag z,\nablag\phi)_\Gamma
 =
  \int_{\Gamma_h}
  (\nablag\deltag z)^e
  \cdot
  \bigl(B_hB_h^T-\mu_hP^e\bigr)
  (\nablag\phi)^e
  \,ds_h.
\end{equation*}
Therefore, by Assumption~\ref{lem:parametric-geometry},
\begin{align}
  \left|
    (\nablagh q_z,\nablagh\phi^e)_{\Gamma_h}
    -
    (\nablag\deltag z,\nablag\phi)_\Gamma
  \right|
  &\le
  Ch^{r+1}
  \norm{z}_{H^3(\Gamma)}
  \norm{\phi}_{H^1(\Gamma)}.
  \label{eq:smooth-gradient-defect-one}
\end{align}
The same argument gives
\begin{align}
  \left|
    (\nablagh q_\phi,\nablagh z^e)_{\Gamma_h}
    -
    (\nablag\deltag\phi,\nablag z)_\Gamma
  \right|
  &\le
  Ch^{r+1}
  \norm{\phi}_{H^3(\Gamma)}
  \norm{z}_{H^1(\Gamma)}.
  \label{eq:smooth-gradient-defect-two}
\end{align}
Finally, $(q_z,q_\phi)_{\Gamma_h}-(\deltag z,\deltag\phi)_\Gamma=\bigl((1-\mu_h)q_z,q_\phi\bigr)_{\Gamma_h}$,
and hence
\begin{equation}\label{eq:smooth-mass-defect}
  \left|
    (q_z,q_\phi)_{\Gamma_h}
    -
    (\deltag z,\deltag\phi)_\Gamma
  \right|
  \le
  Ch^{r+1}
  \norm{z}_{H^2(\Gamma)}
  \norm{\phi}_{H^2(\Gamma)}.
\end{equation}
Combining \eqref{eq:smooth-operator-product}--\eqref{eq:smooth-mass-defect}, we obtain 
\begin{equation}\label{eq:smooth-principal-final}
  |I_{0,h}|
  \le
  Ch^{r+1}
  \norm{z}_{H^3(\Gamma)}
  \norm{\phi}_{H^3(\Gamma)}.
\end{equation}

It remains to estimate $I_{\Pi,h}$. 
The scaled trace inequality, inverse inequality, the $L^2$-stability of $\Pi_{Q_h}$, and \eqref{eq:laplacian-comparison-smooth} imply
\begin{align}
 \left(\sum_{E\in\Eh}h_E\norm{\mean{\Pi_{Q_h}\varepsilon_z}}_{L^2(E)}^2\right)^{1/2}
  \le
  C\norm{\varepsilon_z}_{L^2(\Gamma_h)}
  \le
  Ch^r\norm{z}_{H^2(\Gamma)}.
  \label{eq:projected-operator-edge-z}
\end{align}
Since $q_z$ and $q_\phi$ possess at most two derivatives under the assumed $H^4$-regularity, the scaled trace inequality, inverse inequality and the projection approximation give,
with $\alpha_k:=\min\{k-1,2\}$,
\begin{align}
\left(\sum_{E\in\Eh}h_E\norm{\mean{(\Pi_{Q_h}-I)q_z}}_{L^2(E)}^2\right)^{1/2}
  &\le
  Ch^{\alpha_k}\norm{z}_{H^{2+\alpha_k}(\Gamma)}
  \le
  Ch^{\alpha_k}\norm{z}_{H^4(\Gamma)},
  \label{eq:smooth-projection-edge-z}
\end{align}
The same estimates hold for $\phi$ replacing $z$ in \eqref{eq:projected-operator-edge-z}-\eqref{eq:smooth-projection-edge-z}.   
Then using the weighted Cauchy-Schwarz in \eqref{eq:smooth-projection-defect}, 
\eqref{eq:projected-operator-edge-z}-\eqref{eq:smooth-projection-edge-z}, and \eqref{eq:smooth-geometric-jump-global}, we obtain 
\begin{equation}\label{eq:smooth-projection-final}
  |I_{\Pi,h}|
  \le
  C\left(h^{2r}+h^{r+\alpha_k}\right)
  \norm{z}_{H^4(\Gamma)}
  \norm{\phi}_{H^4(\Gamma)}.
\end{equation}

For the lifting-lifting and stabilization terms, lifting stability and \eqref{eq:smooth-geometric-jump-global} imply
\begin{align}
  |I_{R,h}|
  &\le
  Ch^{2r}
  \norm{z}_{H^2(\Gamma)}
  \norm{\phi}_{H^2(\Gamma)},\qquad 
  |I_{S,h}|
  \le
  C\beta h^{2r}
  \norm{z}_{H^2(\Gamma)}
  \norm{\phi}_{H^2(\Gamma)}.
  \label{eq:smooth-lifting-final}
\end{align}

Since $r\ge1$, $k\ge2$, we have $\alpha_k\ge1$, $2r\ge r+1$, $r+\alpha_k\ge r+1$.
Combining \eqref{eq:smooth-smooth-decomposition}, \eqref{eq:smooth-principal-final}, \eqref{eq:smooth-projection-final}, and \eqref{eq:smooth-lifting-final} proves \eqref{eq:smooth-smooth-consistency}.
\end{proof}

\bibliographystyle{unsrt}
\bibliography{ref_new}

\end{document}